\def\l@subsection{\@tocline{2}{0pt}{2pc}{6pc}{}} \makeatother
\DeclareRobustCommand{\SkipTocEntry}[5]{}
\DeclareMathAlphabet{\mathcalligra}{T1}{calligra}{m}{n}
\DeclareFontShape{T1}{calligra}{m}{n}{<->s*[2.2]callig15}{}
\newtheorem{theorem}{Theorem}
\newtheorem{prop}[theorem]{Proposition}
\newtheorem{lemma}[theorem]{Lemma}
\newtheorem{coro}[theorem]{Corollary}
\newtheorem*{corollary*}{Corollary}
\newtheorem*{theorem*}{Theorem}
\newtheorem*{proposition*}{Proposition}
\newtheorem*{conjecture*}{Conjecture}
\numberwithin{equation}{section}
\numberwithin{theorem}{section}
\newtheorem{example}[theorem]{Example}
\theoremstyle{remark}
\newtheorem{rmk}[theorem]{Remark}
\newtheorem{notation}[theorem]{Notation}
\newtheorem{construction}[theorem]{Construction}
\newtheorem*{strategy*}{Strategy}
\theoremstyle{definition}
\newtheorem*{defi*}{Definition}
\newtheorem{deff}[theorem]{Definition}
\newcommand{\cA}{{\mathcal A}}
\newcommand{\cC}{{\mathcal C}}
\newcommand{\cD}{{\mathcal D}}
\newcommand{\cG}{{\mathcal G}}
\newcommand{\cM}{{\mathcal M}}
\newcommand{\cN}{{\mathcal N}}
\newcommand{\cR}{{\mathcal R}}
\newcommand{\cS}{{\mathcal S}}
\newcommand{\cU}{{\mathcal U}}
\newcommand{\cY}{{\mathcal Y}}
\newcommand{\Hom}{\operatorname{Hom}}
\newcommand{{\bull}}{{\scriptscriptstyle{\bullet}}}
\newcommand{\Left}{\mathsf{Left}}
\newcommand{\Op}{\mathsf{Op}}
\newcommand{\Cat}{\mathsf{Cat}}
\newcommand{\alg}{\mathsf{Alg}}
\newcommand{\env}{\mathsf{Env}}
\newcommand{\omg}{\ensuremath{\mathbf{\Omega}}}
\newcommand{\ssets}{\ensuremath{\mathbf{sSets}}}
\newcommand{\dsets}{\ensuremath{\mathbf{dSets}}}
\DeclareMathOperator{\colim}{\mathsf{colim}}
\newcommand{\id}{\text{id}}
\newcommand{{\op}}{{{\rm op}}}
\newcommand{{\coop}}{{{\rm coop}}}
\newcommand{\adjunction}[4]{%
	#1\colon #2%
	\mathrel{\vcenter{%
			\offinterlineskip\m@th
			\ialign{%
				\hfil$##$\hfil\cr
				\longrightharpoonup\cr
				\noalign{\kern-.3ex}
				\smallbot\cr
				\longleftharpoondown\cr
			}%
	}}%
	#3 \noloc #4%
}
\newcommand{\longrightharpoonup}{\relbar\joinrel\rightharpoonup}
\newcommand{\longleftharpoondown}{\leftharpoondown\joinrel\relbar}
\newcommand\noloc{%
	\nobreak
	\mspace{6mu plus 1mu}
	{:}
	\nonscript\mkern-\thinmuskip
	\mathpunct{}
	\mspace{2mu}
}
\newcommand{\smallbot}{%
	\begingroup\setlength\unitlength{.15em}%
	\begin{picture}(1,1)
		\roundcap
		\polyline(0,0)(1,0)
		\polyline(0.5,0)(0.5,1)
	\end{picture}%
	\endgroup
}
\begin{document}

\title{Rectification of dendroidal left fibrations} 

\author{Francesca Pratali}

\begin{abstract}
For a discrete colored operad $P$, we construct an adjunction between the category of dendroidal sets over the nerve of $P$ and the category of simplicial $P$-algebras, and prove that when $P$ is $\Sigma$-free it establishes a Quillen equivalence with respect to the covariant model structure on the former category and the projective model structure on the latter. When $P=A$ is a discrete category, this recovers a Quillen equivalence previously established by Heuts-Moerdijk, of which we provide an independent proof. 

\noindent To prove the constructed adjunction is a Quillen equivalence, we show that the left adjoint presents a previously established operadic straightening equivalence between $\infty$-categories. This involves proving that, for a discrete symmetric monoidal category $A$, the Heuts-Moerdijk equivalence is a monoidal equivalence of monoidal Quillen model categories.
\end{abstract}

\address{F.P.: LAGA, Universit\'e Sorbonne Paris Nord, 99 avenue Jean-Baptiste Cl\'ement, 93430 Villetaneuse, France}

\email{pratali@math.univ-paris13.fr}

\keywords{$\infty$-operads, left fibrations, monoidal straightening-unstraightening, Grothendieck construction, dendroidal Segal spaces}

\subjclass[2020]{18N70, 18N45,	18N55, 55P48}

\maketitle

\tableofcontents

\section*{Introduction and main results}

\subsection*{Introduction}
\noindent Topological operads were introduced by May \cite{M:TGILS} and Boardman-Vogt \cite{BV:HIASTS} to describe the up to homotopy algebraic structures on topological spaces, and since then they have become a standard tool in algebra, geometry, combinatorics, and mathematical physics. 
Intuitively, a topological operad $P$ is the data of a set $C(P)$ of objects, also called colors, and for any choice of objects $c_1,\dots,c_n,c$ in $P$, a space of multimorphisms $P(c_1,\dots,c_n;c)$, together with appropriate partial composition laws  $$\circ_{c_i}\colon P(c_1,\dots,c_n;c)\times P(d_1,\dots,d_m;c_i)\longrightarrow P(c_1,\dots,c_{i-1},d_1,\dots,d_m,c_{i+1},\dots,c_n;c) $$ for any $i\in \{1,\dots,n\}$. 

\noindent In homotopy theory, topological spaces are regarded only up to weak homotopy equivalence, and for this purpose it is more convenient to represent topological spaces via \emph{simplicial sets}, the presheaves on the category of finite linear orders $\Delta$, and to deal with homotopy theories via Quillen model category structures. The diagramatic operadic intuition sees the natural occurence of trees, which can be used to model operadic composition in a combinatorial way: the above partial composition law can be depicted as

\adjustbox{scale=0.8,center}{%
\begin{tikzcd}
	&                                                &     &                         &                                                &                                       &  & c_1 \arrow[rrdd, no head] & d_1 \arrow[rd, no head] &                                                  & d_2 & c_3 \\
	c_1 \arrow[rd, no head] & c_2 \arrow[d, no head]                         & c_3 & d_1 \arrow[rd, no head] &                                                & d_2                                   &  &                           &                         & \bullet \arrow[ru, no head] \arrow[d, no head]   &     &     \\
	& \bullet \arrow[ru, no head] \arrow[d, no head] &     &                         & \bullet \arrow[ru, no head] \arrow[d, no head] & {} \arrow[rr, "\circ_{c_2}", maps to] &  & {}                        &                         & \bullet \arrow[rruu, no head] \arrow[d, no head] &     &     \\
	& c_0                                            &     &                         & c_2                                            &                                       &  &                           &                         & c_0                                              &     &    
\end{tikzcd}
}

\noindent This intuition has been made precise via the \emph{dendroidal formalism}, introduced by Moerdijk-Weiss in \cite{MW:DS}, whose key construction is that of the \emph{dendroidal category} $\omg$, a category of trees built as an extension of $\Delta$ and meant to capture operadic composition and higher associativities. In particular, topological, or rather, simplicial operads and their homotopy theory are encoded by the category of presheaves on $\omg$, called \emph{dendroidal sets}, and Quillen model structures on it.

\noindent Paralleling the fact that simplicial operads generalize simplicial categories, dendroidal sets and their homotopy theories form a natural extension of those of simplicial sets, essentially because the category $\omg$ extends $\Delta$. It means that there is a Quillen model structure on dendroidal sets, referred to as the operadic model structure, which is Quillen equivalent to a model structure à la Dwyer-Kan on topological operads and which extends the equivalence between Joyal model structure on simplicial sets and the Dwyer-Kan model structure on topological categories (\cite{CM:DSSO}). Several localizations of the operadic model structure are homotopically meaningful and recover non-multiplicative analogues on simplicial sets; an example of this is the covariant model structure for dendroidal left fibrations: meant to homotopically model operads cofibred in groupoinds, it is a central construction of this work, and when restricted to simplicial sets, it yields the covariant model structure for left fibrations obtained as a localization of Joyal's model structure.

\noindent Many topological operads of interest arise as the localization of \emph{discrete} operads, namely operads whose spaces of multimorphisms are discrete topological spaces, that is, sets. It is the case for the little disks operad $\mathbb{E}_n$ and more generally for its variant $\mathbb{E}_M$ constructed out of a topological manifold $M$ of dimension $n$ \cite[\S 5]{Lu:HA}. Indeed, one proves that the $\infty$-operad $\mathbb{E}_M$ is equivalent to the discrete operad of disks on $M$ with disjoint union and inclusion localized as isotopy equivalences (\cite{Lu:HA}, \cite{AFT:FHSS}), which is at the core of the theory of \emph{factorization homology}.

\noindent It is therefore fundamental to understand the homotopy theory of algebras over a discrete operad, and this is what we focus our attention on in this work, where we show that it is equivalent to the homotopy theory of dendroidal left fibrations over the dendroidal nerve of $P$. 

\subsection*{Main results}

\noindent Consider a discrete operad $P$. The category of simplicial $P$-algebras $\alg_P(\ssets)$ has a Quillen model structure, called the \emph{projective model structure}, right transfered from the Kan-Quillen model structure on simplicial sets (\cite{BM:AHTO}): weak equivalences, resp. fibrations, are objectwise weak homotopy equivalences, resp. Kan fibrations. 

\noindent Any discrete operad can be realized as a dendroidal set via the \emph{dendroidal nerve functor}, a fully faithful functor $\cN_d\colon \Op\to\dsets$. Given any dendroidal set $X$, the \emph{covariant model structure} on $\dsets/X$ \cite{He:AOIO} has as the fibrant objects \emph{dendroidal left fibrations} $\alpha\colon E \to X$, making $X$ cofibred in Kan complexes. 
Our main result is the following

\begin{theorem*}[\Cref{QEstrunstr}]
Let $P$ be a $\Sigma$-free discrete operad. There is a natural Quillen equivalence $$\adjunction{\rho_!^P}{\dsets/\cN_d P}{\alg_P(\ssets)}{\rho^*_P} $$ between the covariant model structure on dendroidal sets over the dendroidal nerve of $P$ and the projective model structure on simplicial $P$-algebras.
\end{theorem*}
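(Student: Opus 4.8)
The plan is to first verify that $(\rho_!^P,\rho^*_P)$ is a Quillen adjunction, and then promote it to a Quillen equivalence by identifying the derived left adjoint $\mathbb{L}\rho_!^P$ with an operadic straightening equivalence of $\infty$-categories. For the Quillen adjunction, recall that the cofibrations of the covariant model structure on $\dsets/\cN_d P$ are the normal monomorphisms; I would check that $\rho_!^P$ sends a set of generating normal monomorphisms to cofibrations of the projective structure on $\alg_P(\ssets)$, whose (trivial) fibrations are detected objectwise as (trivial) Kan fibrations. For the trivial cofibrations it is cleanest to exploit that the covariant structure is a left Bousfield localization of the slice of the operadic model structure: it then suffices to see that $\rho_!^P$ is left Quillen before localizing and that it sends the localizing maps to weak equivalences in $\alg_P(\ssets)$, whence it descends to a left Quillen functor on the localization by the universal property.

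For the equivalence it is enough to prove that the induced functor of underlying $\infty$-categories $\mathbb{L}\rho_!^P\colon \mathrm{LFib}(\cN_d P)\to \mathrm{Alg}_P(\cS)$ is an equivalence, where the source is the $\infty$-category of dendroidal left fibrations presented by the covariant model structure and the target is that of $P$-algebras in spaces. I would compare $\mathbb{L}\rho_!^P$ with the independently established operadic straightening equivalence $\mathrm{St}^P$ between these two $\infty$-categories. Both functors are cocontinuous, so since $\mathrm{LFib}(\cN_d P)$ is generated under colimits by the representables $\omg[T]\to\cN_d P$, it suffices to check that $\mathbb{L}\rho_!^P$ and $\mathrm{St}^P$ agree, compatibly with colimits, on these generators. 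On a representable, $\rho_!^P$ computes by its defining coend/left Kan extension formula the corresponding free $P$-algebra generated by the tree datum, which is precisely the value taken by $\mathrm{St}^P$, so the two cocontinuous functors coincide.

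Controlling $\rho_!^P$ on generators, together with the requisite monoidality, is where I would reduce to the case of a discrete symmetric monoidal category, as in the strategy announced in the abstract. Passing to the symmetric monoidal envelope $A=\env(P)$, so that a $P$-algebra is the same datum as a symmetric monoidal functor out of $A$, I would transport the comparison through the Heuts-Moerdijk equivalence for $A$, enhanced to a monoidal Quillen equivalence: both $\dsets/\cN_d A$ with Day convolution and $\alg_A(\ssets)$ with the pointwise tensor are monoidal model categories, and the equivalence respects these structures. The $\Sigma$-freeness of $P$ is what makes the passage homotopically correct: freeness of the symmetric-group actions guarantees that the colimits computing the envelope, the Day convolution, and the free-algebra functor need no derived correction, so that the strict coend defining $\rho_!^P$ already presents $\mathbb{L}\rho_!^P$ and matches $\mathrm{St}^P$ on the nose.

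The main obstacle I anticipate is proving the monoidal enhancement of the Heuts-Moerdijk equivalence at the model-categorical level: one must equip its left adjoint with a strong monoidal structure intertwining Day convolution on the dendroidal side with the pointwise tensor on algebras, and then verify that the associated comparison maps are weak equivalences. This reduction to tensor products of representables, together with the cofibrancy bookkeeping needed for the monoidal comparison to be correctly derived, is exactly where $\Sigma$-freeness becomes indispensable, and I expect it to require the most careful combinatorial analysis of the whole argument.
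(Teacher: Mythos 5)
Your overall architecture --- establish the Quillen adjunction, then pass to underlying $\infty$-categories and identify $(\rho_!^P)_\infty$ with the operadic straightening $\mathsf{St}^P$ by cocontinuity plus agreement on representables, with the monoidal Heuts--Moerdijk equivalence for $\env(P)$ in the background --- is indeed the paper's architecture. But your plan for the Quillen adjunction step has a genuine gap. You propose to work with the left adjoint: check that $\rho_!^P$ sends generating normal monomorphisms to projective cofibrations, and handle trivial cofibrations through the Bousfield-localization presentation of the covariant structure. Any such check requires computing $\rho_!^P$ on boundary inclusions $(\partial T,\alpha\iota)\to(T,\alpha)$ and on horn inclusions, i.e.\ computing $\rho_!^P$ of non-representable objects. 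Since $\rho_!^P$ is a left adjoint, these values are (non-sifted) colimits in $\alg_P(\ssets)$, and when $P$ has operations of arity at least $2$ such colimits are \emph{not} computed objectwise; this is exactly the obstruction the paper points out, and it is why the Heuts--Moerdijk left-adjoint argument works for a category $A$ (colimits in $\mathsf{Fun}(A,\ssets)$ are objectwise) but does not carry over to operads. The paper circumvents this by proving instead that $\rho^*_P$ is \emph{right} Quillen, via the combinatorial lifting results (\Cref{reducelift}, \Cref{projfibleftfib}), which translate dendroidal boundary and horn filling over $\cN_d P$ into simplicial boundary and horn filling for non-degenerate maximal chains in $\cA^T({r_T})$; that translation is the technical heart of the paper and is absent from your plan.

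Two further points. First, your monoidal structures are swapped: Day convolution lives on the algebra side $\mathsf{Fun}(\env(P),\ssets)$, while the slice $\ssets/\cN\,\mathsf{env}(P)$ carries the convolution product $\boxtimes=m_!(-\times-)$ induced by the tensor product of the envelope; a ``pointwise tensor'' on algebras is not the structure for which \Cref{kiku} holds, and the left adjoint is not strong monoidal for that pairing. Second, your identification on representables is too quick: $\rho_!^P(T,\alpha)(c)$ is isomorphic to $\cN(\alpha/c)$, which is only \emph{weakly equivalent} to a free $P$-algebra on the colors of the leaves of $T$; what the comparison with $\mathsf{St}^P$ actually requires is the isomorphism $\cN(\alpha/c)\cong\env(T)\times_{\env(P)}\env(P)_{/c}$ matching the defining envelope formula of $\mathsf{St}^P$, together with naturality in $(T,\alpha)$. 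The functoriality issue is real: the representables are not themselves left fibrations, so they generate the ambient presheaf $\infty$-category rather than $\mathsf{dLeft}_{\cN_d P}$, and the paper resolves this by precomposing both functors with the localization ${\mathsf{DOp}_\infty}_{/X}\to\mathsf{DLeft}_X$ before comparing them on representables. With these repairs your second and third paragraphs reproduce the paper's equivalence step; the adjunction step, however, needs a genuinely different argument from the one you propose.
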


 \noindent When $X$ is a simplicial set, dendroidal left fibrations over $X$ are precisely the left fibrations of simplicial sets over $X$, and there is an equivalence of covariant model structures ${\dsets/M\simeq \ssets/M}$. When $P=A$ is a discrete category, its dendroidal nerve is the usual nerve, and the adjunction in \Cref{QEstrunstr} coincides with the one constructed by Heuts-Moerdijk in \cite{HM:LFHC}, which in particular means that the right derived functor of $\rho^*_A$, which is equivalent to Lurie's \emph{relative nerve functor} \cite{Lu:HTT}, is equivalent to the left derived functor of the homotopy colimit functor. In addition to this we prove that, when the category $A$ has a symmetric monoidal structure, the equivalence is monoidal. More precisely:
 
 \begin{theorem*}[\Cref{kiku}]
 	Let $A$ be a discrete symmetric monoidal category. The Quillen pair  $$ \adjunction{\rho_!^A}{\ssets/\cN A}{\mathsf{Fun}(A,\ssets)}{\rho^*_A}$$ is a monoidal Quillen equivalence of Quillen monoidal model categories, with respect to the projective model structure and Day convolution on $\mathsf{Fun}(A,\ssets)$ and the covariant model structure and the $\boxtimes$-product (\ref{section6.2}) on $\ssets/\cN A$.
 \end{theorem*}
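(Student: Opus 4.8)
The plan is to reduce the statement to the standard criterion for monoidal Quillen equivalences (Schwede--Shipley): once we know that $(\rho_!^A \dashv \rho^*_A)$ is a Quillen equivalence---which is the content of the Heuts--Moerdijk equivalence recovered above---it suffices to promote it to a \emph{strong monoidal} Quillen pair satisfying the unit axiom. Concretely, I would establish (i) that $\rho_!^A$ is strong (symmetric) monoidal for $\boxtimes$ and Day convolution; (ii) that $(\ssets/\cN A, \boxtimes)$ and $(\mathsf{Fun}(A,\ssets), \otimes_{\mathrm{Day}})$ are both monoidal model categories; and (iii) that the unit axiom holds. Strong monoidality makes the monoidal comparison maps isomorphisms, hence in particular weak equivalences between cofibrant objects, so (i)--(iii) together with the Quillen equivalence give exactly a monoidal Quillen equivalence.

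For (i) I would use that $\rho_!^A$ is a left adjoint, hence cocontinuous, and that both $\boxtimes$ and $\otimes_{\mathrm{Day}}$ preserve colimits in each variable separately and are defined by coends. By density it is then enough to produce a natural isomorphism $\rho_!^A(X \boxtimes Y) \cong \rho_!^A X \otimes_{\mathrm{Day}} \rho_!^A Y$ on representables, where it unwinds to the defining coend of the monoidal structure of $A$ through the Yoneda embedding; since $\boxtimes$ is constructed in \S\ref{section6.2} precisely to mirror Day convolution along the straightening, this comparison should be essentially formal. I would simultaneously identify $\rho_!^A$ of the $\boxtimes$-unit with the Day-convolution unit $A(I,-)$, for $I$ the monoidal unit of $A$, and check coherence and compatibility with the symmetries.

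For (ii), the Day-convolution side is essentially standard: $\ssets$ is a cartesian monoidal model category, $A$ is discrete, and Day convolution on $\mathsf{Fun}(A,\ssets)$ with the projective model structure satisfies the pushout-product and unit axioms. The real work is on the covariant side. Since cofibrations in $\ssets/\cN A$ are the monomorphisms and every object is cofibrant, the cofibration half of the pushout-product axiom follows from cocontinuity of $\boxtimes$ in each variable, and the unit axiom (iii) is immediate from strong monoidality together with cofibrancy of the unit. The essential point is therefore to show that $\boxtimes$-tensoring a covariant trivial cofibration with a cofibration remains a covariant weak equivalence; I would reduce this to the generating trivial cofibrations of the covariant (Bousfield-localized) model structure---the left-anodyne / leaf extensions---and check directly that $\boxtimes$ preserves them.

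I expect the main obstacle to be exactly this last verification: the covariant weak equivalences are defined by a left Bousfield localization and are not detected levelwise, so one must argue that they, and more delicately the trivial cofibrations, are preserved under $\boxtimes$ against an arbitrary cofibration. I would attack it using the explicit generators of the localizing class together with the cocontinuity and levelwise description of $\boxtimes$, possibly transporting the problem across the already-known Quillen equivalence in order to recognize the relevant maps as covariant equivalences. With the pushout-product axiom secured, ingredients (i)--(iii) are in hand, and the Schwede--Shipley criterion upgrades the Quillen equivalence to the asserted monoidal Quillen equivalence.
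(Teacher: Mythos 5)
Your overall architecture---(i) strong monoidality of $\rho_!^A$, (ii) monoidal model structures on both sides, (iii) the unit axiom, all combined with the already-known Quillen equivalence---is exactly the paper's decomposition (\Cref{bonobo}, \Cref{mon1}, and the proof of \Cref{kiku}, whose entire content is point (i)). The genuine problem is in your primary plan for the step you yourself identify as the crux. The covariant trivial cofibrations are \emph{not} generated by the left-anodyne/leaf-horn maps: the covariant model structure is a left Bousfield localization, and its trivial cofibrations strictly contain the saturation of the left horn inclusions (equivalently, covariant fibrations form a strictly smaller class than left fibrations). Already over $\cN A=\Delta^0$, where the covariant model structure is the Kan--Quillen model structure, the inclusion $\{1\}\hookrightarrow\Delta^1$ is a trivial cofibration but is not left anodyne: it is itself a left fibration, and the lifting square of it against itself (identities on top and bottom) admits no lift, since there is no retraction $\Delta^1\to\{1\}$ under $\{1\}$ splitting the inclusion. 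Consequently, checking that $\boxtimes$ preserves left anodynes only proves preservation of left anodynes; it cannot yield the pushout-product axiom, and no explicit generating set for the localized trivial cofibrations is available to substitute.

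Your fallback---transporting across the Quillen equivalence---does close this gap, and it is worth spelling out because it is a genuinely different route from the paper's. Order the steps as: first prove (i), which is a purely $1$-categorical statement independent of the model structures; then, for $i$ a cofibration and $j$ a covariant trivial cofibration, cocontinuity and strong monoidality of $\rho_!^A$ give $\rho_!^A(i\,\square\,j)\cong\rho_!^A(i)\,\square_{\mathrm{Day}}\,\rho_!^A(j)$, which is a projective trivial cofibration because the Day-convolution model structure is monoidal and $\rho_!^A$ is left Quillen (\Cref{adjj}); since every object of $\ssets/\cN A$ is cofibrant and $\rho_!^A$ is a left Quillen equivalence (\Cref{bonobo}), it reflects weak equivalences between cofibrant objects, so $i\,\square\,j$ is a covariant equivalence, and it is a cofibration because its underlying map is a cartesian pushout-product of monomorphisms. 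The paper instead proves \Cref{mon1} without any reference to the rectification functor, by citing Nikolaus--Sagave's lemma that $\varphi\times-$ preserves covariant equivalences and then invoking left properness and two-out-of-three; that route keeps the monoidality of $(\ssets/\cN A,\boxtimes)$ a self-contained fact, whereas yours makes it a corollary of (i) and the equivalence. Finally, note that your step (i) is more than ``essentially formal'': the density reduction to representables is valid (both bifunctors are cocontinuous in each variable), but the identification on representables---comparing the Day convolution of $\Delta^n\times_{\cN A}\cN(A/-)$ and $\Delta^m\times_{\cN A}\cN(A/-)$ with $(\Delta^n\times\Delta^m)\times_{\cN A}\cN(A/-)$---is precisely where the paper's explicit laxity/colaxity construction does its work, so it must be carried out rather than asserted.
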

 
 \noindent We can rephrase the above result by saying that the Quillen equivalence $(\rho_!^A,\rho^*_A)$ gives a presentation of the monoidal un/straightening equivalence established by Ramzi in \cite{R:MGCIC} in the case of a discrete symmetric monoidal category.
 
 \subsection*{Strategy}
\noindent The proof of the main result, \Cref{QEstrunstr} is carried on in two distinct steps, the second of which makes use of \Cref{kiku} in the case the discrete category $A$ is the symmetric monoidal envelope of $P$.

\noindent The first step consists in constructing the adjunction, which we do by defining the left adjoint. Then we prove that the pair obtained is a Quillen adjunction. Consider a discrete operad $P$, non necessarily $\Sigma$-free.

\begin{proposition*}[\Cref{adjj}]
	There exists a pair of adjoint functors $$\adjunction{\rho_!^P}{\dsets/\cN_d P}{\alg_P(\ssets)}{\rho^*_P},$$ and it is a Quillen adjunction between the covariant model structure on dendroidal sets over the nerve of $P$ and the projective model structure on simplicial $P$-algebras.
\end{proposition*}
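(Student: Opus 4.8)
The plan is to build the adjunction as a left Kan extension out of a presheaf category and then to reduce the Quillen condition to a check on generators. First I would record that, since $\cN_d P$ is a normal dendroidal set, the slice $\dsets/\cN_d P$ is itself a presheaf category, $\dsets/\cN_d P\simeq\mathsf{PSh}(\omg_{/\cN_d P})$, where $\omg_{/\cN_d P}$ is the category of elements of $\cN_d P$: its objects are labelled trees, i.e. pairs $(T,\sigma)$ with $\sigma\colon\Omega[T]\to\cN_d P$ assigning a color $\ell(e)\in C(P)$ to every edge $e$ of $T$ and an operation $p_v$ to every vertex $v$ compatibly. To give the colimit-preserving functor $\rho_!^P$ it then suffices, by the universal property of presheaf categories, to give a functor $F\colon\omg_{/\cN_d P}\to\alg_P(\ssets)$ and to set $\rho_!^P=\mathrm{Lan}_y F$; the right adjoint is then automatic, $\rho^*_P(B)=\Hom_{\alg_P}(F(-),B)$, because $\alg_P(\ssets)$ is locally presentable. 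On representables I would define $F(T,\sigma)$ to be the $P$-algebra associated to the labelled tree, built as a tensor $\mathrm{Free}_P(\ell(\mathrm r))\otimes K_T$ of the free $P$-algebra on a generator in the root color $\ell(\mathrm r)$ with a simplicial set $K_T$ extracted from $T$; here $\otimes$ is the tensoring of the simplicial category $\alg_P(\ssets)$ over $\ssets$. The precise shape of $F$ is pinned down by two requirements: that $F(\eta,c)=\mathrm{Free}_P(c)$, so that $\rho^*_P(B)$ has the color-$c$ object of $B$ as its fibre over $c$, and that in the categorical case $P=A$ the construction reproduce the relative nerve of Heuts-Moerdijk.

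Second, I would reduce the Quillen statement to generators. Both model structures are cofibrantly generated --- the covariant one by He and the projective one by Berger-Moerdijk transfer --- so it is enough to check that $\rho_!^P$ carries a set of generating cofibrations to cofibrations and a set of generating trivial cofibrations to trivial cofibrations (equivalently, that $\rho^*_P$ preserves fibrations and trivial fibrations, the two being interchanged by adjunction). The generating cofibrations of the covariant structure are the normal boundary inclusions $\partial\Omega[T]\hookrightarrow\Omega[T]$ over $\cN_d P$. Since $\rho_!^P$ preserves colimits and $F$ takes values in tensors $\mathrm{Free}_P(c)\otimes(-)$ of a cofibrant $P$-algebra with a simplicial set, the image of such an inclusion is (a pushout built from) $\mathrm{Free}_P(c)\otimes(L\hookrightarrow K_T)$ for a monomorphism of simplicial sets; by the pushout-product axiom of the simplicial model category $\alg_P(\ssets)$, a cofibrant object tensored with a cofibration is a cofibration, so cofibrations are preserved.

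The generating trivial cofibrations of the covariant structure are the left-anodyne maps, generated by the inner horn inclusions $\Lambda^{e}[T]\hookrightarrow\Omega[T]$ ($e$ an inner edge) together with the root horn inclusions $\Lambda^{\mathrm r}[T]\hookrightarrow\Omega[T]$. I would treat the two families separately. For an inner horn the image $\rho_!^P(\Lambda^{e}[T])\to\rho_!^P(\Omega[T])$ is an isomorphism: the $P$-algebra attached to a labelled tree depends only on its Segal core, because operadic composition is \emph{strict} in a $P$-algebra and the Segal core is already contained in the inner horn; an isomorphism is in particular a trivial cofibration. For a root horn, the cofibration part is as in the previous paragraph, and for the weak-equivalence part I would compute $\rho_!^P$ on $\Lambda^{\mathrm r}[T]\hookrightarrow\Omega[T]$ explicitly and exhibit it, color by color, as $\mathrm{Free}_P(\ell(\mathrm r))\otimes(L\hookrightarrow K_T)$ with $L\hookrightarrow K_T$ an anodyne extension of simplicial sets; since $\mathrm{Free}_P(\ell(\mathrm r))$ is cofibrant and $(-)\otimes(-)$ is left Quillen, the result is a projective trivial cofibration, and the claim follows from the Kan-Quillen case.

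The main obstacle is exactly this last point: identifying the image of a root horn under $\rho_!^P$ and seeing that the relevant simplicial inclusion is anodyne. I expect to handle it by a filtration of $\Omega[T]$ by the nondegenerate subtrees adjacent to the root, so that $\rho_!^P$ of the horn becomes a transfinite composite of pushouts of free algebras on simplicial horn inclusions, thereby reducing the homotopical content to the anodyne-extension calculus on $\ssets$. A secondary, more bookkeeping-type difficulty is to verify that $F$ is genuinely functorial on $\omg_{/\cN_d P}$ --- in particular along the outer faces that change the root edge --- and that it agrees with the relative nerve when $P=A$; this compatibility is what ultimately licenses the comparison with the Heuts-Moerdijk adjunction invoked elsewhere in the paper.
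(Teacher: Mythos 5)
Your construction of the left adjoint is where the proposal breaks down, and this is the heart of the statement. The ansatz $F(T,\sigma)=\mathrm{Free}_P(\ell(\mathrm r))\otimes K_T$ cannot be correct: since $\mathrm{Free}_P(\ell(\mathrm r))$ is concentrated in the colors reachable by operations out of copies of the root color, so is any tensor of it with a simplicial set, i.e.\ $\bigl(\mathrm{Free}_P(\ell(\mathrm r))\otimes K\bigr)(c)=\emptyset$ whenever $P(\ell(\mathrm r),\dots,\ell(\mathrm r);c)=\emptyset$ for all arities. Take $P$ with colors $a,b$, a single binary operation $p\in P(a,a;b)$ and only identities otherwise, and let $(T,\sigma)$ be the $2$-corolla labelled by $p$: the rectification must have value at $a$ a two-point discrete simplicial set (one point per leaf; in the paper's notation $\cN(\sigma/a)$, coming from $\cA^T(l_1)\sqcup\cA^T(l_2)$), whereas your formula gives $\emptyset$ at $a$ for every $K_T$. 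Your two ``pinning down'' requirements (value on $\eta$, agreement with Heuts--Moerdijk for categories) do not determine $F$ on trees with vertices; the missing idea is exactly \Cref{sirene}: $\cA^T(e)$ is the nerve of the poset of subtrees of $T$ rooted at $e$, with grafting as the $T$-action, and $\rho^P(T,\alpha)=\alpha_!(\cA^T)$. The same confusion sinks your inner-horn step: $\rho_!^P(\Lambda^e T)\to\rho_!^P(T)$ is essentially never an isomorphism. Already for $P$ the trivial one-color operad, $\dsets/\cN_d P\simeq\ssets$, $\alg_P(\ssets)=\ssets$, and $\rho_!^P$ is the identity functor (by the formula $\rho_!^A(X,f)(a)\simeq X\times_{\cN A}\cN(A/a)$ of \Cref{section6.1}), so an inner horn maps to itself. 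The ``only the Segal core matters because composition is strict'' intuition is invalid because $\rho_!^P$ is a left Kan extension: its value on $\Lambda^eT$ is a colimit over the faces of $T$ other than $\partial_eT$, not the value on $T$. Since both halves of your trivial-cofibration check rest on these two claims, the Quillen verification is unsupported.

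On the reduction itself there are two further problems. First, dendroidal \emph{left} fibrations are characterized by lifting against inner and \emph{leaf} horns; the root horns you list would give right fibrations and the contravariant structure. Second, the trivial cofibrations of the covariant model structure are \emph{not} the saturation of those horns, so ``check the horns'' is not by itself a proof of left Quillen-ness: for the trivial operad the covariant structure is the Kan--Quillen structure on $\ssets$, and $\Lambda^1_1\to\Delta^1$ is a trivial cofibration which is not in the saturation of inner and leaf horns (the left fibration $\{1\}\to\Delta^1$ has no lift against it). Your two checks (boundaries to cofibrations, horns to trivial cofibrations) would in fact suffice, but only by passing through the criterion the paper uses: a right adjoint that preserves trivial fibrations and fibrations between fibrant objects is right Quillen, covariant fibrations between fibrant objects being exactly the left fibrations. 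This is precisely how the paper argues, and note that it does so entirely on the right-adjoint side: \Cref{reducelift} and \Cref{projfibleftfib} reduce the dendroidal lifting problems for $\rho^*_P(F)\to\rho^*_P(G)$ against boundaries and horns to simplicial lifting problems for $F(\alpha(r_T))\to G(\alpha(r_T))$, via the non-degenerate maximal chains and \Cref{lemmaaux} of \Cref{matija}. Working on the left-adjoint side, as you propose, runs into the obstacle the paper flags explicitly: colimits in $\alg_P(\ssets)$ are not computed objectwise, so computing $\rho_!^P$ on a boundary or horn (a nontrivial colimit of representables) is not tractable by a filtration of the kind you sketch; this is the reason the combinatorial work is done where it is.
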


 \noindent Let us make the following remarks of the proof of \Cref{adjj}:
\begin{itemize}
	\item To construct $\rho_!^P$, we proceed via left Kan extension and impose naturality with respect to base change, so that we reduce to defining a simplicial $T$-algebra $\cA^T=\rho^T(T,\id_T)$ for any tree $T$ in $\omg$, regarded as the operad it generates. Given an edge $e$ of $T$, the simplicial set $\cA^T(e)$ is defined as the nerve of the poset of subtrees of $T$ with root $e$, and the operadic action corresponds to grafting of trees.
	\item We deduce the description of the $P$-algebra $\rho_!^P(X,\alpha)$ when $X\simeq T$ is a tree; with this in hand, we are able to prove that $\rho^*_P$ is right Quillen, which makes the pair $(\rho_!^P, \rho^*_P)$ a Quillen adjunction.
\end{itemize}

\noindent Showing that $(\rho_!^P,\rho^*_P)$ is a Quillen equivalence while still using the language of model categories suggests one needs to describe the simplicial $P$-algebra $\rho_!^P(X,\alpha)$ for any dendroidal set $X$. We start with the following observation.

\begin{proposition*}
Consider a tree $T$ and an element $(T,\alpha)$ in $\dsets/\cN_d P$. On an object $c$ of $P$, the value of the simplicial $P$-algebra $\rho_!^P(T,\alpha)$ can be written as $$\rho_!^P(T,\alpha)(c)\simeq \env(T)\times_{\env(P)} {\env(P)}_{/c} ,$$ where $\env(-)$ is the nerve of the symmetric monoidal envelope of a discrete operad.
\end{proposition*}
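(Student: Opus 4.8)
The plan is to reduce the computation to the ``universal'' tree and then transport it to the world of envelopes. First I would use the base-change naturality built into the construction of $\rho_!$ (\Cref{adjj}): applying it to the structure map $\alpha\colon T\to P$ and to the representable $(T,\id_T)$ gives a canonical isomorphism $\rho_!^P(T,\alpha)\cong\alpha_!\,\cA^T$, where $\cA^T=\rho_!^T(T,\id_T)$ is the subtree-algebra described in \Cref{adjj} and $\alpha_!\colon\alg_T(\ssets)\to\alg_P(\ssets)$ is the operadic induction functor, left adjoint to restriction $\alpha^*$. Since $\alpha^*$ preserves objectwise fibrations and weak equivalences it is right Quillen, so $\alpha_!$ is left Quillen; moreover the representable $(T,\id_T)$ is cofibrant in the covariant model structure (representable dendroidal sets are normal), so $\rho_!^T$ being left Quillen forces $\cA^T$ to be a cofibrant $T$-algebra. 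This cofibrancy is what will later let me pass to derived functors for free.

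Next I would translate everything through the identification $\alg_Q(\ssets)\simeq\mathsf{Fun}^{\otimes}\big(\mathrm{Env}(Q),(\ssets,\times)\big)$ of $Q$-algebras with strong symmetric monoidal functors out of the envelope category $\mathrm{Env}(Q)$ (so that $\env(Q)=\cN\,\mathrm{Env}(Q)$). Under this equivalence $\alpha^*$ becomes precomposition with the induced functor $\mathrm{Env}(\alpha)\colon\mathrm{Env}(T)\to\mathrm{Env}(P)$, so $\alpha_!$ is the monoidal left Kan extension along $\mathrm{Env}(\alpha)$; because the cartesian product on $\ssets$ preserves colimits in each variable, this monoidal extension is strong and is computed by the underlying pointwise left Kan extension. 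Evaluating at a color $c$ therefore yields $\rho_!^P(T,\alpha)(c)\cong\colim_{\mathrm{Env}(\alpha)/c}\cA^T\circ\pi$, the colimit over the comma category of $\mathrm{Env}(\alpha)$ above $c$, with $\pi$ the projection to $\mathrm{Env}(T)$. I would then record that this comma category is precisely the strict pullback $\mathrm{Env}(T)\times_{\mathrm{Env}(P)}\mathrm{Env}(P)/c$ of categories, and that — since the nerve preserves pullbacks, and the slice projection $\mathrm{Env}(P)/c\to\mathrm{Env}(P)$ is a (discrete) right fibration, so that this is also a homotopy pullback — its nerve is exactly the right-hand side $\env(T)\times_{\env(P)}\env(P)_{/c}$ of the statement.

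It then remains to identify the strict colimit with this nerve. The key observation is that each $\cA^T(e)=\cN(\mathrm{Sub}_e(T))$ is contractible: the poset $\mathrm{Sub}_e(T)$ of subtrees of $T$ with root $e$ has a top element, namely the maximal subtree lying above $e$, so its nerve is contractible, and hence so is the value $\prod_i\cN(\mathrm{Sub}_{e_i}(T))$ of $\cA^T\circ\pi$ on any object of the comma category. Consequently $\cA^T$ is objectwise contractible and the canonical map $\cA^T\to\ast$ to the terminal $T$-algebra is a weak equivalence. Since $\cA^T$ is cofibrant, $\alpha_!\cA^T$ already computes the left derived induction, so $\rho_!^P(T,\alpha)=\alpha_!\cA^T\simeq\mathbb{L}\alpha_!(\ast)$. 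Finally $\mathbb{L}\alpha_!(\ast)$ is the homotopy left Kan extension of the terminal diagram along $\mathrm{Env}(\alpha)$, whose value at $c$ is $\operatorname{hocolim}_{\mathrm{Env}(\alpha)/c}\ast\simeq B(\mathrm{Env}(\alpha)/c)=\cN(\mathrm{Env}(\alpha)/c)$. Combining the three steps gives $\rho_!^P(T,\alpha)(c)\simeq\env(T)\times_{\env(P)}\env(P)_{/c}$.

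The step I expect to be the main obstacle is precisely this passage from the strict colimit that the left adjoint literally computes to the homotopy colimit $B(\mathrm{Env}(\alpha)/c)$ — equivalently, checking that the derived operadic induction $\mathbb{L}\alpha_!$ is correctly computed by the homotopy left Kan extension of the underlying simplicial diagrams. This rests on two compatibilities that must be handled carefully: that $\cA^T$ is a genuine cofibrant resolution of the terminal algebra (which I obtain from $\rho_!^T$ being left Quillen, rather than from a direct cell-structure analysis), and that the monoidal left Kan extension agrees with the underlying one at the derived level, which is exactly where the fact that $\times$ preserves colimits must be used homotopically. Once these are in place, the pointwise homotopy-colimit formula together with the contractibility of the fibers delivers the classifying space of the comma category, and hence the stated fiber product.
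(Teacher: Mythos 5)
Your opening moves are sound and track what the paper does implicitly: $\rho_!^P(T,\alpha)=\alpha_!\cA^T$ by construction, each $\cA^T(e)$ is contractible (initial/terminal element of the subtree poset), $\cA^T$ is cofibrant because $\rho_!^T$ is left Quillen (\Cref{adjj}), and the comma category $\mathsf{env}(\alpha)/c$ is the strict pullback $\mathsf{env}(T)\times_{\mathsf{env}(P)}\mathsf{env}(P)/c$, whose nerve is the stated right-hand side. But the argument collapses at exactly the step you flag as the ``main obstacle'', and that obstacle is not a compatibility one can handle carefully in this generality --- it is false for the operads the statement quantifies over. Cofibrancy of $\cA^T$ in $\alg_T(\ssets)$ does \emph{not} make the induced diagram on $\mathsf{env}(\alpha)/c$ projectively cofibrant: the comma category inherits the automorphisms coming from the symmetric group actions of $P$, and over a groupoid with automorphisms the strict colimit that $\alpha_!$ literally computes is a strict quotient, not a homotopy quotient. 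Concretely, take $T=\eta$ and $P=\mathsf{Com}$, with $c$ the unique color. Then $\cA^\eta=\Delta^0$ and $\alpha_!\cA^\eta$ is the free commutative simplicial monoid on a point,
$$\alpha_!\cA^\eta\;\cong\;\coprod_{n\geq 0}(\Delta^0)^{\times n}/\Sigma_n\;\cong\;\mathbb{N}\quad(\text{a discrete simplicial set}),$$
whereas the formula your last step produces is
$$\env(\eta)\times_{\env(\mathsf{Com})}\env(\mathsf{Com})_{/c}\;\cong\;\cN(\Sigma)\;\simeq\;\coprod_{n\geq 0}B\Sigma_n,$$
and these are not weakly equivalent. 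So the sentence ``$\mathbb{L}\alpha_!(\ast)$ is the homotopy left Kan extension of the terminal diagram along $\mathrm{Env}(\alpha)$'' is precisely where the proof breaks: it holds only when $P$ is $\Sigma$-free (so that the comma category has trivial automorphism groups), and even then it is not a formal consequence of $\times$ preserving colimits --- it is a rectification statement of the kind the paper only obtains much later (\Cref{desame}), via Pavlov--Scholbach and Lurie's operadic Kan extension machinery, under that extra hypothesis.

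The paper's own proof of this proposition is entirely $1$-categorical and has no such gap: one unwinds the explicit Berger--Moerdijk description of the induction functor $\alpha_!$ applied to $\cA^T$ and obtains a canonical \emph{isomorphism} $\rho_!^P(T,\alpha)(c)\cong\cN(\alpha/c)$, where $\alpha/c$ is the comma poset of tuples of edges of $T$ equipped with an operation of $P$ into $c$ (see \Cref{sec1subs1}); note that this formula is valid for every discrete $P$ and returns the discrete $\mathbb{N}$ in the $\mathsf{Com}$ example above. The further identification of $\cN(\alpha/c)$ with the fibre product of envelope nerves is then a direct inspection, and --- as your computation makes visible --- it is exactly the place where $\Sigma$-freeness genuinely enters, consistently with the paper invoking that identification only inside the proof of \Cref{desame}. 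To rescue your derived-functor route you would have to restrict to $\Sigma$-free $P$ and actually prove that $\alpha_!\cA^T$ computes the homotopy Kan extension (for instance by showing the underlying $\mathsf{env}(T)$-diagram of $\cA^T$ is projectively cofibrant); at that point you would have rebuilt most of the machinery your argument was designed to bypass, and you would still obtain only a weak equivalence where the paper's direct computation gives an isomorphism.
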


\noindent As any dendroidal set can be written as the colimit of the trees mapping into it, and as the rectification functor $\rho_!^P$ is a left adjoint, one has $$ \rho_!^P(X,\alpha)\simeq \colim_{T\to X} \rho_!^P(T,\alpha').$$ However, the above colimit of algebras hard to compute, the main obstacle being the fact that (non sifted) colimits in the category $\alg_P(\ssets)$ are not computed objectwise -a phenomenon we do not observe in functor categories. 

\noindent Instead, we directly study the left derived functor $\mathbb{L}\rho_!^P$ of $\rho_!^P$, or rather the functor of ${\text{{$\infty$-categories}}}$ $(\rho_!^P)_\infty$ induced by $\rho_!^P$ by virtue of the fact that it is a Quillen functor. The key result is the following

\begin{theorem*}[{\Cref{desame}}]
	Let $P$ be a discrete \emph{$\Sigma$-free} operad, there is an equivalence of functors of $\infty$-categories $(\rho_!^P)_\infty\simeq \mathsf{St}^P$, where $\mathsf{St}^P$ is the \emph{operadic straightening functor} of \cite{P:SUEIO}.
	
	\noindent In other words, the left Quillen functor $\rho_!^P$ presents the operadic straightening functor $\mathsf{St}^P$.
\end{theorem*}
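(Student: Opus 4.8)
The plan is to exploit that both functors under comparison are cocontinuous and agree on the generating trees, and then to organize the tree-level computation through the symmetric monoidal envelope, where the already-established monoidal equivalence \Cref{kiku} does the real work. First I would record that $(\rho_!^P)_\infty$ and $\st^P$ are colimit-preserving functors sharing the same source and target $\infty$-categories: for $(\rho_!^P)_\infty$ this holds because $\rho_!^P$ is left Quillen (\Cref{adjj}), so its induced $\infty$-functor is a left adjoint and preserves all colimits; for $\st^P$ it holds because it is one half of the straightening equivalence of \cite{P:SUEIO}. Since the covariant $\infty$-category underlying $\dsets/\cN_d P$ is a left Bousfield localization of presheaves, it is generated under colimits by the representable dendroidal sets $T\to\cN_d P$ attached to trees $T$, and writing $X\simeq\colim_{T\to X}T$ one sees that a colimit-preserving functor out of it is the left Kan extension of its restriction to trees. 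It therefore suffices to produce a natural equivalence $(\rho_!^P)_\infty(T,\alpha)\simeq\st^P(T,\alpha)$, compatibly in $(T,\alpha)$.

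Next I would pass to the envelope. Let $A$ denote the symmetric monoidal envelope category of $P$, so that $\cN A=\env(P)$; since $P$ is $\Sigma$-free, $A$ is a genuine discrete symmetric monoidal category and \Cref{kiku} applies to it. The proposition computing $\rho_!^P$ on trees identifies, for each color $c$,
\[
\rho_!^P(T,\alpha)(c)\;\simeq\;\env(T)\times_{\env(P)}\env(P)_{/c},
\]
which is exactly the value at the object $c\in A$ of the categorical (monoidal) straightening $\st^{A}$ applied to the \emph{enveloped} left fibration $\env(T)\to\cN A$. This is the crucial identification: I would check that enveloping sends the dendroidal left fibration $(T,\alpha)$ to a simplicial left fibration over $\cN A$ carrying a canonical commutative-algebra structure for the $\boxtimes$-product, and that, under the dictionary ``$P$-algebra $=$ lax symmetric monoidal functor $A\to\ssets=\mathsf{CAlg}$ for Day convolution'', the operadic action of $\rho_!^P(T,\alpha)$ by grafting is precisely the Day-convolution multiplication transported along \Cref{kiku}. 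In other words, $(\rho_!^P)_\infty$ is obtained from $(\rho_!^{A})_\infty$ by applying the envelope functor on the source and restricting to commutative-algebra objects on the target.

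I would then invoke \Cref{kiku}: the monoidal Quillen equivalence $(\rho_!^{A},\rho^*_{A})$ induces, on commutative-algebra objects for $\boxtimes$ and for Day convolution respectively, an equivalence whose underlying functor of $\infty$-categories is $(\rho_!^{A})_\infty$ and which, by the remark following \Cref{kiku}, presents the monoidal categorical straightening $\st^{A}$. Combined with the tree computation above, this yields a natural equivalence between $(\rho_!^P)_\infty$ and the restriction of $\st^{A}\circ\env$ to commutative-algebra objects. Finally I would match this last functor with the operadic straightening $\st^P$, using the description of $\st^P$ in \cite{P:SUEIO} via the envelope, namely that the operadic straightening is the monoidal straightening over $A=\env(P)$ cut down to symmetric monoidal functors. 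This gives $(\rho_!^P)_\infty\simeq\st^P$ on trees, hence everywhere by the cocontinuity argument of the first paragraph.

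The main obstacle is the envelope comparison and its coherent compatibility with \cite{P:SUEIO}. Concretely, I expect the delicate points to be: (i) promoting the pointwise tree formula to a genuine identification of $(\rho_!^P)_\infty$ with ``$\env$ followed by $(\rho_!^{A})_\infty$ on commutative-algebra objects'', i.e. checking that the $P$-algebra structure (grafting) is carried to the Day-convolution/commutative-algebra structure coherently and naturally in $(T,\alpha)$, not merely on underlying spaces; and (ii) identifying the resulting functor with $\st^P$, which requires that the operadic straightening of \cite{P:SUEIO} is compatible, through the envelope, with the categorical straightening presented by \Cref{kiku}. Both points rely essentially on $\Sigma$-freeness, which guarantees that $A=\env(P)$ is an ordinary category with free symmetric-group actions, so that Day convolution, the $\boxtimes$-product, and the projective model structure on $\alg_P(\ssets)$ all interact as required.
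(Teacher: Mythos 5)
Your opening reduction is exactly the paper's: both functors are cocontinuous, the source is generated under colimits by the representables $(T,\alpha)$ with $T$ a tree, so it suffices to produce an equivalence on those, naturally in $(T,\alpha)\in\omg/P$. (The paper makes this precise by transporting everything along the Segal-space comparison of \Cref{cisimo} and precomposing with the localization ${\mathsf{DOp}_\infty}_{/X}\to\mathsf{DLeft}_X$, so that the two composites become cocontinuous functors on a presheaf $\infty$-category; your version, precomposing with the Bousfield localization of $\dsets/\cN_d P$, is the same argument.) Where you genuinely diverge is the tree-level identification. The paper \emph{defines} $\st^P$, for $\Sigma$-free discrete $P$, by the envelope formula $\st^P(X,f)(c)\simeq \env(\lambda'X)\times_{\env(P)}\env(P)_{/c}$, so its proof simply inspects that $(\rho_!^P)_\infty(T,\alpha)(c)\simeq \cN(\alpha/c)\simeq \env(T)\times_{\env(P)}\env(P)_{/c}$ and then checks that the two $P$-actions (grafting of subtrees on one side, composition in the envelope on the other) coincide. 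You instead pass to $A=\env(P)$, envelope the fibration, invoke \Cref{kiku}, and translate through the Day-convolution dictionary, aiming to exhibit $(\rho_!^P)_\infty$ as ``$\env$ followed by $(\rho_!^A)_\infty$ on algebra objects''. This is faithful to the strategy announced in the paper's introduction and would \emph{explain} the formula rather than merely verify it, but it is strictly more than the written proof needs.

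The cost is that your two ``delicate points'' are real and remain open, and in the paper's framework they are avoidable. Point (ii) dissolves outright: since $\st^P$ is defined here by the formula, there is nothing to match through \cite{P:SUEIO}; conversely, if you insist on the construction of \cite{P:SUEIO}, you are assuming a compatibility (operadic straightening $=$ monoidal straightening over $\env(P)$ restricted to the appropriate algebra objects) that neither the paper nor your proposal proves. Point (i) is a genuine coherence obligation --- a $\boxtimes$-commutative-algebra structure on enveloped left fibrations, carried coherently and naturally in $(T,\alpha)$ to the Day multiplication --- which in your architecture replaces the paper's single, much smaller check that two explicit $P$-actions on $\cN(\alpha/c)$ agree. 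Note also that your dictionary is imprecise: by the universal property of the envelope, $P$-algebras in $\cS$ correspond to \emph{strong} symmetric monoidal functors $\env(P)\to\cS$, i.e.\ to the full subcategory of $\mathsf{CAlg}$ of Day convolution spanned by the strong functors, not to all lax ones; your closing phrase ``cut down to symmetric monoidal functors'' gestures at this, but making that cut-down precise is part of the unproven compatibility. The shortest repair is to keep your first paragraph and then compare the two explicit formulas on representables directly, verifying the actions agree --- which is precisely the paper's proof.
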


\noindent As in \cite{P:SUEIO} the functor $\mathsf{St}^P$ is proved to be an equivalence of $\infty$-categories, we deduce that the left derived functor $\mathbb{L}\rho_!^P$ is an equivalence between the homotopy categories, which means that $(\rho_!^P,\rho^*_P)$ is a Quillen equivalence. 

\noindent Besides, we also deduce an explicit description of the left derived functor $\mathbb{L}\rho_!^P$ on \emph{any} dendroidal left fibration over $\cN_d P$, which we record in the following final

\begin{corollary*}
	Let $(X,\alpha)$ be a dendroidal left fibration in $\dsets/\cN_d P$. On an object $c$ of $P$, the value of the simplicial $P$-algebra $\mathbb{L}\rho_!^P(X,\alpha)$ is weakly homotopy equivalent to the fibre product $$\mathbb{L}\rho_!^P(X,\alpha)(c)\simeq \env(X)\times_{\env(P)} {\env(P)}_{/c} ,$$ where $\env(X)$ denotes the underlying $\infty$-category of Lurie's symmetric monoidal envelope of (the Lurie $\infty$-operad equivalent to) $X$.
\end{corollary*}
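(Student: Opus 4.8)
The plan is to derive the corollary from the two preceding results by a colimit argument at the level of $\infty$-categories. The key input is \Cref{desame}, which identifies the functor $(\rho_!^P)_\infty$ induced on $\infty$-categories with the operadic straightening functor $\mathsf{St}^P$, together with the earlier Proposition computing $\rho_!^P(T,\alpha)(c)$ as the fibre product $\env(T)\times_{\env(P)}\env(P)_{/c}$ on trees. Since $\rho_!^P$ is a left Quillen functor, its left derived functor $\mathbb{L}\rho_!^P$ is computed on a dendroidal left fibration $(X,\alpha)$, which is already cofibrant in the covariant model structure, simply by applying $(\rho_!^P)_\infty$; evaluating a simplicial $P$-algebra at an object $c$ preserves weak equivalences, so it suffices to identify the value at $c$ up to weak homotopy equivalence.

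First I would write $X$ as the colimit of its trees, $X\simeq\colim_{T\to X}T$, a colimit taken in $\dsets/\cN_d P$ along the category of elements. Because $(\rho_!^P)_\infty\simeq\mathsf{St}^P$ is a left adjoint functor of $\infty$-categories, it preserves this colimit, giving $\mathbb{L}\rho_!^P(X,\alpha)\simeq\colim_{T\to X}\mathbb{L}\rho_!^P(T,\alpha')$. Then I would evaluate at $c$. The subtle point flagged in the Strategy section is that colimits in $\alg_P(\ssets)$ are not computed objectwise; however, once we pass to the homotopy colimit and use that $\mathsf{St}^P$ lands in the $\infty$-category presented by the projective model structure, evaluation at $c$ is a colimit-preserving functor to spaces, so $\mathbb{L}\rho_!^P(X,\alpha)(c)\simeq\colim_{T\to X}\bigl(\env(T)\times_{\env(P)}\env(P)_{/c}\bigr)$.

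It then remains to identify this colimit with $\env(X)\times_{\env(P)}\env(P)_{/c}$. Here I would use that the symmetric monoidal envelope functor $\env(-)$, being itself a left adjoint (it is computed as a left Kan extension / localization of the operadic envelope), commutes with the colimit $\colim_{T\to X}\env(T)\simeq\env(X)$, and that the base change $(-)\times_{\env(P)}\env(P)_{/c}$ — pullback along the left fibration $\env(P)_{/c}\to\env(P)$ — preserves colimits. The latter is the crucial structural fact: pullback along a left fibration (equivalently, along a map classified by a functor to spaces) is colimit-preserving, which is exactly what lets the fibre product pass through the colimit over trees. Combining these gives $\colim_{T\to X}\bigl(\env(T)\times_{\env(P)}\env(P)_{/c}\bigr)\simeq\bigl(\colim_{T\to X}\env(T)\bigr)\times_{\env(P)}\env(P)_{/c}\simeq\env(X)\times_{\env(P)}\env(P)_{/c}$, as desired.

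The main obstacle I anticipate is justifying the commutation of the fibre product with the colimit over trees at the $\infty$-categorical level, i.e.\ verifying that base change along $\env(P)_{/c}\to\env(P)$ preserves the relevant (generally non-sifted) colimit. This is precisely the difficulty the author raises about objectwise colimits in $\alg_P(\ssets)$, and the resolution is to work entirely in the straightened picture where $\env(P)_{/c}\to\env(P)$ is a left fibration and base change is universal; one also needs that $\env(X)$ agrees with Lurie's symmetric monoidal envelope of the $\infty$-operad associated to $X$, which should follow from compatibility of the dendroidal and Lurie envelopes under the comparison established upstream. Once these two compatibilities are in place, the remaining steps are formal.
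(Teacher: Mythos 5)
There is a genuine gap, and it sits exactly where the paper's Strategy section says the difficulty lies. Your key step asserts that, after passing to the $\infty$-category presented by the projective model structure, ``evaluation at $c$ is a colimit-preserving functor to spaces''. This is false: that $\infty$-category is $\alg_P(\cS)$ (\cite{PS:ARCSO}, as recalled in \Cref{sofar}), and when $P$ has operations of arity $\neq 1$ the evaluation functors $\mathrm{ev}_c\colon \alg_P(\cS)\to\cS$ preserve sifted colimits and limits but not general colimits, while the index category $\omg/X$ is not sifted. Passing from model categories to $\infty$-categories does not repair the failure of objectwise colimits -- that failure is homotopy-invariant. Concretely, take $P=\Omega(C_2)$ (colors $a,b,r$, freely generated by one binary operation) and $X=\eta_a\sqcup\eta_b\to\cN_d P$ selecting the two leaf colors. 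Then $\mathbb{L}\rho_!^P(X)=\rho_!^P(X)$ is the free $P$-algebra on one point of color $a$ and one point of color $b$, so $\mathbb{L}\rho_!^P(X)(r)\simeq \ast$; but every tree over $X$ is a linear tree of constant color $a$ or $b$, so $\colim_{T\to X}\mathbb{L}\rho_!^P(T)(r)=\emptyset$. Hence $\mathrm{ev}_c\circ(\rho_!^P)_\infty$ is genuinely not cocontinuous, and since nothing in your argument after the colimit decomposition uses that $X$ is a left fibration, your proof scheme would establish this false general statement. The auxiliary commutations are not formal either: the underlying $\infty$-category of the envelope is not cocontinuous in the operad (the envelope is a left adjoint into $\mathsf{smCat}_\infty$, but the forgetful functor to $\mathsf{Cat}_\infty$ destroys colimits; e.g.\ the envelope of $\mathsf{Triv}\sqcup\mathsf{Triv}$ is $\mathsf{Fin}^{\simeq}\times\mathsf{Fin}^{\simeq}$, not $\mathsf{Fin}^{\simeq}\sqcup\mathsf{Fin}^{\simeq}$). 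What is true is a statement about the combined functor $Y\mapsto \env(Y)\times_{\env(P)}\env(P)_{/c}$ evaluated on left fibrations, and proving that is essentially the corollary itself, not a formal consequence of adjunctions.

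The paper's own deduction is different and involves no colimit computation at all. Since $P$ is $\Sigma$-free, $\cN_d P$ is normal, hence every object of $\dsets/\cN_d P$ is cofibrant and $\mathbb{L}\rho_!^P(X,\alpha)\simeq(\rho_!^P)_\infty(X,\alpha)$; by \Cref{desame} this is $\mathsf{St}^P(X,\alpha)$; and the displayed fibre-product formula is precisely the description of $\mathsf{St}^P$ on dendroidal left fibrations recalled in \Cref{panico} from \cite{P:SUEIO}. In other words, the hard content -- that the envelope formula computes the straightening of a left fibration -- is imported from \cite{P:SUEIO}, and the colimit-over-trees argument appears only inside the proof of \Cref{desame}, where it is applied to the two algebra-valued functors $(\rho_!^P)_\infty$ and $\mathsf{St}^P$, both cocontinuous into $\alg_P(\cS)$, and never after evaluation at an object. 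Your proposal inverts this logic: it takes \Cref{desame} as input but then tries to re-derive the value of $\mathsf{St}^P$ on left fibrations from its value on representables by commuting colimits past evaluation, which is exactly the step that cannot be made formal. The fix is short: after invoking cofibrancy and \Cref{desame}, simply quote the description of $\mathsf{St}^P$ from \Cref{panico}.
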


\subsection*{Outline} \hfill 

\begin{enumerate}
\item In \Cref{section1} we recall the fundamentals of the dendroidal formalism, the notions of quasioperad, dendroidal left fibration and the covariant model structure. We also set up notation.

\item In \Cref{section4}, we construct the rectification functor $\rho_!^P$ (\Cref{sec1subs1}) and then give an explicit description of its right adjoint $\rho^*_P$ (\Cref{sec1subsec2}). 

\item In \Cref{section5}, we prove that the pair $(\rho_!^P,\rho^*_P)$ forms a Quillen adjunction by showing that $\rho^*_P$ is right Quillen (\Cref{adjj}). 
More precisely, in \Cref{matija} we relate root preserving dendroidal faces of a tree with chains in the linear order of posets of the tree with a fixed root. We employ this construction in the next two sections, where we give necessary conditions to produce lifts of morphisms of the form $\rho^*_P(F)\to\rho^*_P(G) $ against dendroidal boundary inclusions (\Cref{reducelift}) and inner and left horn inclusions (\Cref{projfibleftfib}).

\item In \Cref{section6}, we focus on the case where $P=A$ is a discrete category. First, we show that in this case the Quillen adjunction is a Quillen equivalence (\Cref{bonobo}). Then we prove that, when $A$ has also a symmetric monoidal structure, both model categories in \Cref{adjj} are \emph{monoidal} model categories, and we prove that the Quillen equivalence is a monoidal equivalence of Quillen model categories (\Cref{kiku}).

\item In \Cref{section8} we come back to the case where $P$ is a general discrete operad. First, we recall the key facts needed to interpret the model categorical statements as statements about $\infty$-categories and rephrase in this language the results of the previous sections (\Cref{sofar}). In \Cref{panico} we recall the definition of the operadic straightening functor, and in \Cref{fine} we prove that, under the assumption that $P$ is moreover $\Sigma$-free, the rectification functor presents the straightening functor (\Cref{desame}), deducing the Quillen equivalence.
\end{enumerate}


\subsection*{Acknowledgements} I would like to thank Gijs Heuts for suggesting this problem to me and for the helpful discussions. I would like to thank Joost Nuiten, Miguel Barata, Hugo Pourcelot, Victor Saunier and Maxime Ramzi for interesting comments, Ieke Moerdijk for insightful exchanges and Matija Bašić for the help with some of the combinatorics in \Cref{section5}. I would like to thank my advisor, Eric Hoffbeck, for his constant support and interest in my projects, and for carefully reading my drafts.

\noindent Part of this work originated while visiting the University of Utrecht, in 2024, cofinanced by the Eole grant (RFN). My PhD is founded by the European Union’s Horizon 2020 research and innovation programme under the Marie Skłodowska-Curie grant agreement No 945332.

\section{Recollections: dendroidal formalism}\label{section1}
\noindent Let us start by recalling some facts and set up some notation regarding colored operads and dendroidal sets. Complete proofs and definitions can be found in \cite{HeMo:SDHT}.

\subsection{Operads and their algebras}\label{dalg}\hfill

\noindent We denote by $\Op$ the category of discrete colored operads, from now on just called operads. Given an operad $P$, we write $C(P)$ its set of objects and $P(c_1,\dots,c_n;c)$ for its set of $n$-ary operations with input objects $(c_1,\dots,c_n)$ and output $c$. A discrete category $C$ is an operad that only has one-input one-output operations, so the category of discrete categories $\Cat$ is a full subcategory of operads. The inclusion $\Cat\subseteq \Op$ has a right adjoint, which associates to a given operad its \emph{underlying category}, where one only remembers unary operations of  $P$. We single out the following special class of operads.

\begin{deff}
An operad $P$ is \emph{$\Sigma$-free} if, for any natural number $n$ and any choice of objects $c_1,\dots, c_n, c$, the symmetric group on $n$-elements $\Sigma_n$ acts freely on the set $	\underset{{\sigma \in \Sigma_n}}{\bigcup} P(c_{\sigma(1)},\dots,c_{\sigma(n)};c).$
\end{deff}
\noindent For instance, any category is tautologically $\Sigma$-free. The associative operad is $\Sigma$-free, while the commutative operad it is not.

\noindent A \emph{simplicial $P$-algebra} $F$ is given by a family of simplicial sets $\{F(c)\}_{c\in C(P)}$ and action maps $p_*\colon F_{c_1}\times \dots \times F_{c_n}\to F_c$ for any $p\in \mathcal{P}(c_1,\dots,c_n;c)$, compatible with operadic composition and the symmetric group action. A morphism $\varphi\colon F \to G$ of simplicial $P$-algebras is a collection of maps of simplicial sets $\varphi_c\colon F(c)\to G(c)$, one for every object $c$ of $P$, satisfying the same compatibility conditions. We denote by $\alg_P(\ssets)$ the category of simplicial algebras over $P$.

\begin{rmk}
Algebras on a category $A$ are just functors: there is an identification $\alg_A(\ssets)= \mathsf{Fun}(A,\ssets)$.
\end{rmk}

\noindent Given a morphism of operads $\varphi\colon P \to Q$, there is an induced adjunction between the categories of algebras $\varphi$, $$\adjunction{\varphi_!}{\alg_P(\ssets)}{\alg_Q(\ssets)}{\varphi^*},$$ which we call \emph{base change}. Given $F\in \alg_Q(\ssets)$ and a object $c$ of $P$, one has $$\varphi^*(F)(c)= F({\varphi(c)}), $$ and for $p\in P(c_1,\dots,c_n;c)$, the action map $$p_*\colon \varphi(F)^*({c_1})\times \dots \times \varphi(F)^*({c_1})\longrightarrow \varphi^*(F)(c) $$ is the map $$ \varphi(p)_*\colon F({\varphi(c_1)})\times \dots\times F({\varphi(c_n)})\longrightarrow F({\varphi(c)}).$$ The left adjoint $\varphi_!$ sends free $P$-algebras to free $Q$-algebras, and an explicit description can be found in \cite[\S 4]{BM:RCORHA}.

\subsection{Dendroidal sets}\hfill

\noindent The category of dendroidal sets $\dsets$ is the category of presheaves on the tree category $\omg$. As the simplex category $\Delta$ can be defined as a full subcategory of the category of discrete categories, the dendroidal category $\omg$ can be defined as a full subcategory of the category of discrete operads $\Op$; let us explain how.

\subsubsection{The category $\omg$} The objects of $\omg$ are non-planar trees $T$ with finite vertex set $V(T)$ and edge set $E(T)$, together with a specified edge, denoted by $r_T$, which is attached to a single vertex. We call this edge the \emph{root} of $T$ and this vertex the \emph{root vertex}.

\adjustbox{scale=0.9,center}{%
	\begin{tikzcd}
& && {}&&&&&&&\\
&& {} \arrow[rd, no head] & \bullet \arrow[u, no head] &{}&&{} \arrow[rd, no head]&{}&{}&& {} \arrow[dd, no head] \\
T = & {} & {} \arrow[rd, no head] & \bullet \arrow[ru, no head] \arrow[d, no head] \arrow[u, no head]& \bullet & {} & C_3 \ =& \ \ \bullet \ v \arrow[ru, no head] \arrow[u, no head] \arrow[d, no head] & & \eta \ = &\\
& && \bullet \arrow[d, "r_T", no head] \arrow[ru, no head] \arrow[llu, no head] \arrow[rru, no head] &&&                        & {}&&&{} \\
&   &                        & {}                                                                                              &         &        &                        &                                                                   &     &          &                       
\end{tikzcd}
}
\vspace*{-.3cm}

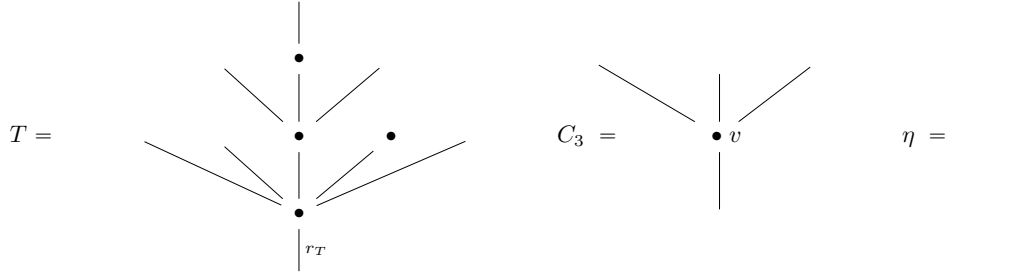
\captionof{figure}{Some typical trees in $\omg$.}

\noindent Given a tree $T$, every vertex $v$ has a single output edge and $n$ input edges, with $n\geq 0$.
The \emph{leaves} of $T$ are the edges which are not the output edge of a vertex, and a vertex whose set of input edges is non-empty and contained in the set of leaves of $T$ is called a \emph{leaf vertex}. An edge $e$ of $T$ is an \emph{inner edge} if it is both the input edge of two (necessarily distinct) vertices.

\noindent A \emph{subtree} of a tree $T$ is a smaller tree $S$ contained in $T$ that satisfies the following condition: if a vertex $v$ of $T$ is contained in $S$, then so are all input edges of $v$. In particular, a subtree of $T$ is uniquely specified by listing its vertices; conversely, a collection of vertices of $T$ defines a subtree only if the graph consisting of all those vertices together with the edges attached to them is connected.

\noindent To any tree $T$ one can associate a free operad $\Omega(T)$, whose objects are the edges of $T$ and where, for edges $e_1,\dots,e_n, e$, one has $ \Omega(T)(e_1,\dots,e_n;e)=\{\ast\}$ if there exists (and if it does it is unique) a subtree of $T$ with leaves $\{e_1,\dots,e_n\}$ and root $e$ and is empty otherwise; the operadic composition corresponds to grafting of subtrees (see \Cref{botanic}). By definition, a morphism of trees $S \to T$ in the dendroidal category $\omg$ is a morphism of discrete operads $\Omega(S)\to \Omega(T)$, and this is how one realizes the inclusion $\Omega(-)\colon \omg\hookrightarrow \Op$. 

\subsubsection{Botanical notions}\label{botanic} The set of edges $E(T)$ has a partial order: given edges $e$ and $f$, one sets $$ e \leq f \ \text{ if and only if the (unique) path from $e$ to the root meets $f$.} $$ In other words, one has $e\leq f$ if and only if $f$ is 'below' $e$. In particular, the root is the unique maximal element of $E(T)$, while the minimal elements are the leaves of $T$ and the output edges of nullary vertices. We call a tree \emph{linear} if all its vertices have valence $1$; in particular, in a linear tree the partial order on the set of edges is a total order.
\noindent Via the output function $\mathsf{out}\colon V(T)\to E(T)$, which assigns to a vertex its output edge, the set $V(T)$ of vertices of $T$ is endowed with a partial order, where one has $v\leq w$ if and only if $\mathsf{out}(v)\leq \mathsf{out}(w)$.  In particular, any map of trees $f\colon S\to T$ induces maps of posets $E(S)\to E(T)$, $V(S)\to V(T)$; however, unless the trees are linear, a map of posets does not necessarily determine a map of trees, see \cite[Proposition 3.5]{HeMo:SDHT}.

\noindent The \emph{grafting} of trees consists in obtaining a new tree $T$ by identifying the root $r_T$ of a tree $R$ with a leaf $l$ of another tree $S$. Formally, $T$ is obtained as the pushout in $\omg$
\[
\begin{tikzcd}
\eta \arrow[r, "l"] \arrow[d, "r_R"] & S \arrow[d] \\
R \arrow[r] & T \ ,
\end{tikzcd}
\]

\noindent and pictorially $T$ can be depicted as follows: \hspace{-5cm}\adjustbox{scale=0.6,center}{
\begin{tikzcd}
	& {} \arrow[rr, no head]                     &                                                               & {} \\
	&                                            & {} \arrow[d, no head] \arrow[lu, no head] \arrow[ru, no head] &    \\
	T \ =  & {} \arrow[rd, no head] \arrow[rr, no head] & {}                                                            & {} \\
	&                                            & {} \arrow[ru, no head] \arrow[d, no head]                     &    \\
	&                                            & {}                                                            &   
\end{tikzcd}
}

\noindent If $l_1,\dots l_n$ are leaves of $S$ and $T_1,\dots,T_n$ are trees, one can graft each $T_i$ onto $l_i$ and obtain a tree $S\underset{l_1,\dots,l_m}{\circ}(T_1,\dots,T_n)$. Observe that the corollas are the indecomposable objects for the grafting, in the sense that, if a corolla $C_n$ is written as the grafting of $R$ along $S$, then either $R$ or $S$ are trivial (i.e. $\simeq \eta$).

\noindent As it happens for $\Delta$, morphisms in $\omg$ can be described in a combinatorial way. We can distinguish four classes of morphisms in $\omg$:
\begin{itemize}
	\item isomorphisms  $T\xrightarrow{\sim} T'$. Observe that, contrarily to $\Delta$, these can be non-trivial;
	\item for every inner edge $e$ of $T$, we call \emph{elementary inner face} the map $\partial_eT\to T$ which comes from contracting $e$ in $T$ and identifying its extremal vertices. If $R \to T$ is obtained by contracting more than one inner edge, we call it \emph{inner face}.
	\item for every subtree $S$ of $T$, there is the \emph{external face} consisting in the inclusion $S\hookrightarrow T$;
	\item  for every edge $e$ of $T$, there is a \emph{degeneracy} $\sigma_e T \to T$ which adds a unary vertex in the middle of $e$.
\end{itemize}
\noindent We call \emph{face map} the composition of inner and external faces and a \emph{degeneracy} the composition of degeneracies. We say that an external face is \emph{elementary face map} if it adds precisely one vertex, and more generally we call \emph{elementary faces} those face maps which add or erase precisely one vertex. Restricted to $\Delta\subseteq \omg$, one recovers the usual face and degeneracy morphisms.

\noindent Face maps, degeneracies and isomorphisms generate the morphisms in $\omg$; these maps satisfy some relations called \emph{dendroidal identities} (see \cite[Sec 3.3.4]{HeMo:SDHT}).

\begin{example} Intuitively, we can think about the generating morphisms of the category $\omg$ in the following informal terms 
	\begin{itemize}
		\item an \emph{inner face} corresponds to sending an operation $p$ to an operadic composition $q\circ_i r$, where $q$ and $r$ are unknown.
		
		\adjustbox{scale=0.6, center}{
			\begin{tikzcd}
				{} \arrow[rd, no head] & {}                                                                      & {}           & {} \arrow[rd, no head] &                                                       & {}                                                   &    \\
				& \ \bullet \ p \arrow[u, no head] \arrow[ru, no head] \arrow[d, no head] & {} \arrow[r] & {}                     & \ \bullet \ r \arrow[ru, no head] \arrow[rd, no head] &                                                      & {} \\
				& {}                                                                      &              &                        &                                                       & \ \bullet \ q \arrow[d, no head] \arrow[ru, no head] &    \\
				&                                                                         &              &                        &                                                       & {}                                                   &   
		\end{tikzcd}}
		\item an \emph{external face} is an inclusion of operads.
		
		\adjustbox{scale=0.6,center}{
	\begin{tikzcd}
		{} \arrow[rd, no head] & {}                                                                      & {}           & {} \arrow[rd, no head] & {} \arrow[d, no head]                                 & {}                                                                     &    \\
		& \ \bullet \ p \arrow[u, no head] \arrow[ru, no head] \arrow[d, no head] & {} \arrow[r] & {}                     & \ \bullet \ p \arrow[ru, no head] \arrow[rd, no head] & {}                                                                     & {} \\
		& {}                                                                      &              &                        &                                                       & \ \bullet \  \arrow[d, no head] \arrow[ru, no head] \arrow[u, no head] &    \\
		&                                                                         &              &                        &                                                       & {}                                                                     &   
		\end{tikzcd}}
		\item  a \emph{degeneracy} sends a unary morphism to the identity.
		
		\adjustbox{scale=0.6,center}{
			\begin{tikzcd}
				{}                                                  &              &    & {} \arrow[dd, no head] &          \\
				\ \bullet \ p \arrow[u, no head] \arrow[d, no head] & {} \arrow[r] & {} &                        & = \ \eta \\
				{}                                                  &              &    & {}                     &         
		\end{tikzcd}}
	\end{itemize}
\end{example}

\noindent Some of the notions of the homotopy theory of simplicial sets can be formulated in the context of dendroidal sets. In particular, we recall the following

\begin{deff} 
	Let $T$ be a tree.
	\begin{itemize}
		\item Its \emph{boundary inclusion} $\partial T \to \Omega[T]$ is the morphism of dendroidal sets given by the union of all the faces of $T$. 
		\item For every inner edge $e$ of $T$, the \emph{inner horn inclusion} $\Lambda^eT\to \Omega[T]$ is given by the union of all elementary faces except the inner $\partial_eT$.
		\item A vertex $v$ of $T$ is a \emph{leaf vertex} if its input edges are leaves of $T$. For any such $v$, there is an induced \emph{leaf horn incusion} $\Lambda^vT\to \Omega[T]$, defined as the union of all elementary faces except the external $\partial_v T$. 
		
		\noindent If $T\simeq C_n$ is a corolla with a unique vertex $v$, we interpret $\Lambda^v C_n \eqqcolon \ell(C_n)$ as the disjoint union of the leaves of $C_n$, and write $\ell(C_n)\to \Omega[C_n]$ for the corresponding leaf horn inclusion.
	\end{itemize} 
\end{deff}

\subsection{Dendroidal homotopy theory}

\subsubsection{Dendroidal nerve functor} The fully faithful functor $\omg\to \Op$ induces by left Kan extension an embedding $\Op\to\dsets$, whose right adjoint we call the \emph{dendroidal nerve functor} $\cN_d\colon \Op\to \dsets$. For $P\in \Op$ and $T\in \omg$, one has $$\cN_d(P)=\mathsf{Hom}_\Op(\Omega(T),P).$$

\noindent Let $\Delta$ be the skeleton of finite linearly ordered sets. There is a fully faithful functor $i\colon \Delta\to \omg$, realized by sending the linear order $[n]$ to the linear tree with $n+1$ edges and $n$ vertices. One has the identifications $ \eta \simeq i([0])$, $C_1\simeq i([1])$. By left Kan extension, the inclusion induces a fully faithful functor $$ i_!\colon \ssets\to \dsets,$$ and this allows to identify $\ssets$ with a full subcategory of $\dsets$.

\noindent The restriction of the dendroidal nerve to $\Cat$ coincides with the usual nerve functor for categories, meaning there is a commutative diagram 
\[
\begin{tikzcd}
	\Cat \arrow[r, "\cN"]\arrow[d]& \ssets \arrow[d, "i_!"] \\
	\Op \arrow[r, "\cN_d"]& \dsets \ .
\end{tikzcd}
\]

\begin{rmk}
	Denote by $\Omega[T]$ the image of a tree $T$ in $\dsets$ under the Yoneda embedding. There is an isomorphism of dendroidal sets $\Omega[T]\simeq \cN_d( \Omega(T))$.
	In particular, $\Omega[\eta]\simeq i_!(\Delta^0)$, $\Omega[C_1]\simeq i_!(\Delta^1)$ and more generally $\Omega([n])\simeq i_!(\Delta^n)$.
	\end{rmk}

\noindent There is a canonical isomorphism $\dsets/\Omega[\eta]\simeq \ssets$, under which the inclusion of simplicial sets in dendroidal sets is the forgetful functor. More generally, for any simplicial set $M$ there is a canonical isomorphism $\dsets/i_!(M )\simeq \ssets/M$.

\subsubsection{Simplicial enrichment}
\noindent Consider a dendroidal set $X$ and the over category $\dsets/X$. Thanks to the Boardman-Vogt tensor product, dendroidal sets are enriched over simplicial sets \cite{HeMo:SDHT}, and this enrichment passes to $\dsets/X$. 

\noindent Given dendroidal sets $E,B$ denote by $\hom(E,B)$ the simplicial set of morphisms of dendroidal sets from $E$ to $B$; given two elements  $(E,f), (B,g)$ in $\dsets/X$, the simplicial set $\hom_{X}(E,B)$ is defined as the pullback: 
\begin{center}
	\begin{tikzcd}
		{\hom_{X}(E,B)} \arrow[d] \arrow[r]& {\hom(E,B)} \arrow[d, "g_*"] \\
		{\Delta^0} \arrow[r, "\{f\}"]         & {\hom(E, X)}            
	\end{tikzcd}
\end{center} 

\subsubsection{Dendroidal $\infty$-operads: quasioperads} Quasioperads model operads where composition is only defined up-to-homotopy, and are defined as the dendroidal sets satisfying an inner horn filling condition shaped on trees.

\begin{deff}
A dendroidal set $X$ is a \emph{quasioperad} if, for any tree $T$, any inner horn inclusion $\Lambda^eT\to \Omega[T]$ and any map $\Lambda^eT\to X$, the solid diagram below admits a dotted lift:
\begin{center}
\begin{tikzcd}
\Lambda^eT \arrow[r] \arrow[d] & X \\
\Omega[T] \arrow[ru, dashed] &
\end{tikzcd}
\end{center}
\end{deff}

\begin{rmk}
If $M\in \ssets\xhookrightarrow{i_!} \dsets$, then $i_!(M)$ is a quasioperad if and only if $M$ is a quasicategory.
\end{rmk}

\begin{deff}
A dendroidal set $X$ is \emph{normal} if, for any tree $T$, the action on $X_T$ of the group of automorphism of $T$ is free.
More generally, a monomorphism of dendroidal sets $Y\to X$ is \emph{normal} if, for any tree $T$, its group of automorphisms acts freely on $X_T\setminus Y_T$.
\end{deff}

\begin{rmk} If $X\simeq \cN_d (P)$ for a discrete operad $P$, then $X$ is normal if and only if $P$ is $\Sigma$-free.
\end{rmk}

\begin{theorem}[\cite{CM:DSMHO}] There exists a model structure on $\dsets$, called the \emph{operadic model structure}, with the following properties:
	\begin{itemize}
		\item the fibrant objects are the quasioperads;
		\item the cofibrations are normal monomorphisms;
		\item weak equivalences between quasioperads are the homotopically fully faithful and essentially surjective maps;  
		\item the model structure induced on $\dsets/\Omega[\eta]\simeq \ssets$ coincides with the Joyal model structure.
	\end{itemize} 
\end{theorem}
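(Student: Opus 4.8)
The plan is to obtain this model structure through Cisinski's theory of cofibrantly generated model structures on a presheaf topos, applied to $\dsets=\mathrm{PSh}(\omg)$, in direct analogy with Joyal's model structure for quasicategories — which must moreover be recovered, so the whole construction should be set up compatibly with the inclusion $i\colon\Delta\hookrightarrow\omg$. First I would fix the cofibrations to be the normal monomorphisms and check this is a reasonable class: it is closed under pushout, transfinite composition and retracts, and is the weakly saturated class generated by the \emph{set} of boundary inclusions $\{\partial T\to\Omega[T]\}_{T\in\omg}$. Because tree automorphisms can be nontrivial, not every dendroidal set is cofibrant (unlike in $\ssets$), so a key preliminary point is that every dendroidal set admits a normalization, providing a functorial cofibrant replacement; this is exactly what makes ``normal'' the correct notion of cofibrant object.

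Next I would equip $\dsets$ with an exact cylinder. The natural choice uses the Boardman--Vogt tensor product $\otimes$ together with the interval $J$ (the image under $i_!$ of the nerve of the free-living isomorphism), via $X\mapsto J\otimes X$. One then defines the class of operadic-anodyne maps as the weakly saturated class generated by the inner horn inclusions $\Lambda^eT\to\Omega[T]$ together with the family of $J$-based maps encoding invertibility of equivalences. The technical engine of the whole argument is the pushout-product compatibility: if $f,g$ are normal monomorphisms then $f\mathbin{\widehat{\otimes}}g$ is again a normal monomorphism, and it is anodyne as soon as one of $f,g$ is. Granting this, Cisinski's theorem produces a cofibrantly generated model structure in which the cofibrations are precisely the normal monomorphisms and the fibrant objects are those $X$ having the right lifting property against all operadic-anodyne maps.

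It then remains to match this abstract structure with the stated characterizations. To identify the fibrant objects with quasioperads I would prove the dendroidal analogue of Joyal's special-horn theorem: for an $X$ with the right lifting property against all inner horns (hence a quasioperad), the lifts against the remaining $J$-anodyne maps are automatic, so inner-horn filling alone already characterizes fibrancy. For the weak equivalences, I would define them internally as the maps $f$ for which $\hom(Y,Z)\to\hom(X,Z)$ is a (weak categorical) equivalence for every quasioperad $Z$, and then show that, restricted to maps between quasioperads, these coincide with the homotopically fully faithful and essentially surjective ones; this step requires developing the homotopy category and the mapping spaces of a quasioperad, together with a Segal-type analysis of their composition.

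Finally, compatibility with the Joyal model structure follows by restriction along $i$: linear trees, and the boundary and inner horn inclusions of $\Delta$, are carried to their dendroidal counterparts, quasioperads restrict to quasicategories under $\dsets/\Omega[\eta]\simeq\ssets$, and normal monomorphisms restrict to all monomorphisms of simplicial sets, so the induced structure is Joyal's. The main obstacle is twofold and essentially combinatorial: establishing the pushout-product compatibility of the Boardman--Vogt tensor with normal monomorphisms and anodynes (the tensor is delicate precisely because of tree automorphisms and the non-representability of $T\otimes T'$), and proving the special-horn theorem identifying the fibrant objects with quasioperads. The characterization of weak equivalences between fibrant objects then rests on these two inputs and on the accompanying mapping-space theory.
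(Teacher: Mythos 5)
First, note that the paper itself contains no proof of this statement: it is a recollection, cited from Cisinski--Moerdijk \cite{CM:DSMHO}, so the relevant comparison is with the proof given there. Your proposal reconstructs that proof's architecture faithfully: cofibrations given by the normal monomorphisms (the saturation of the boundary inclusions $\partial T\to \Omega[T]$, with normalization providing cofibrant replacement), the cylinder $J\otimes(-)$ built from the Boardman--Vogt tensor and the interval $J$, an anodyne class generated by inner horns together with $J$-based maps, a Cisinski-type existence theorem adapted to the situation where not every monomorphism is a cofibration, a special-horn theorem identifying the fibrant objects with the quasioperads, weak equivalences detected by mapping into quasioperads, and restriction along $i\colon\Delta\hookrightarrow\omg$ to recover the Joyal model structure. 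In outline this is the same route.

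However, the claim you single out as the ``technical engine'' is false as stated, and this is a genuine gap --- indeed it is exactly the error in the original Cisinski--Moerdijk treatment that had to be repaired later. For general dendroidal sets it is \emph{not} true that $f\mathbin{\widehat{\otimes}}g$ is (inner or operadic) anodyne whenever $f,g$ are normal monomorphisms one of which is anodyne: the obstruction comes from nullary vertices (stumps) interacting with the tensor product, the same phenomenon responsible for the failure of associativity of $\otimes$, and not from tree automorphisms or the non-representability of $\Omega[T]\otimes\Omega[T']$ as you suggest. The corrected statements, proved in \cite{HeMo:SDHT}, require at least one of the factors to be open (for instance a simplicial set). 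Your construction is salvageable precisely because it never needs more than that: every pushout-product occurring in the verification of the cylinder and anodyne axioms, and in the special-horn theorem, has one factor a monomorphism of simplicial sets (a subobject of $J$), and in that case the restricted pushout-product property does hold. So the route is the right one, but the key lemma must be formulated and proved in the restricted form in which it is actually true; as written, the engine you lean on would break down.
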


\subsubsection{The covariant model structure}\hfill

\begin{deff}
A \emph{dendroidal left fibration} is a morphism $E\to X$ of dendroidal sets having the right lifting property against inner horn and leaf horn inclusions.
\end{deff}
\noindent In particular, if $X$ is a quasioperad then $E$ is as well, and whenever one chooses an operation of $X$ and preimages in $E$ of the inputs elements, one can lift the chosen operation to $E$ along $E\to X$, and this is can be done homotopy-coherently. Indeed, one can consider the leaf horn inclusion of the leaves of a $n$-corolla $\ell(C_n)\to \Omega[C_n]$, and take a commutative diagram
\[
\begin{tikzcd}
\ell(C_n) \arrow[r] \arrow[d] & E \arrow[d, "p"] \\
\Omega[C_n] \arrow[r, "f"] & X
\end{tikzcd} 
\]
The map $\ell(C_n)\to E$ selects objects $y_1,\dots,y_n$ of $E$, while $f$ corresponds to an $n$-ary operation $f\in X(p(y_1),\dots,p(y_n); x)$ for some $x\in X_\eta$. A lift in the diagram is the existence of a object $y\in E_\eta$ and $\overline{f}\in E(y_1,\dots,y_n; y)$  with $p(\overline{f})=f$, and coCartesianity of the lift is given by higher horns $\Lambda^x T \to \Omega[T]$.

\begin{rmk} If $p\colon E\to X$ is a map of simplicial sets, then $p$ is a dendroidal left fibration if and only if it is a left fibration of simplicial sets.
\end{rmk}

\begin{theorem}[\cite{He:AOIO},\cite{HeMo:SDHT}]\label{covmambo} Let $X$ be a dendroidal set. The category $\dsets/X$ carries a left proper, cofibrantly generated model structure, called the \emph{covariant model structure}, with the following properties:
\begin{enumerate}
\item The cofibrations are the normal monomorphisms over $X$.
\item The fibrant objects are the dendroidal left fibrations $E\to X$.
\item The fibrations between fibrant objects are the dendroidal left fibrations.
\item A map 
\begin{center}
\begin{tikzcd}
E \arrow[rr, "f"] \arrow[rd]& & B\arrow[ld]\\
& X &  
\end{tikzcd}
\end{center} between dendroidal left fibrations $E\to X$ and $B\to X$ is a weak equivalence if and only if, for any object $c\in X_\eta$, the map $ E_c\to B_c$ between fibres over $c$ is a  weak homotopy equivalence of Kan complexes.
\end{enumerate}

\noindent Moreover, this model structure is a left Bousfield localization of the model structure on $\dsets/X$ induced by the operadic model structure on $\dsets$.
\end{theorem}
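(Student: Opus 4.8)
The plan is to obtain the covariant model structure as a left Bousfield localization, so that the "moreover" clause holds by construction and the remaining properties can be read off from the general theory. First I would take the operadic model structure on $\dsets$ of \cite{CM:DSMHO}, which is combinatorial and left proper with cofibrations the normal monomorphisms, and pass to the slice category $\dsets/X$: the forgetful functor $\dsets/X\to\dsets$ creates a model structure in which a map is a cofibration, fibration or weak equivalence exactly when its underlying map of dendroidal sets is -- this is the induced model structure referred to in the statement -- and it is again combinatorial and left proper. Its cofibrations are the normal monomorphisms over $X$, and its fibrant objects are the operadic fibrations $E\to X$.

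Next I would left Bousfield localize at the set $S$ of leaf horn inclusions $\Lambda^vT\to\Omega[T]$, suitably pulled back over $X$. The general existence theorem for left Bousfield localizations of combinatorial left proper model categories then guarantees that the localized structure $L_S(\dsets/X)$ exists, is combinatorial and left proper, and has the same cofibrations as the ambient slice structure; this yields item (1) as well as the final assertion, since by construction the result is a localization of the slice operadic structure.

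To pin down the fibrant objects for (2) and (3), I would introduce the class of \emph{covariant anodyne} maps, namely the weakly saturated class generated by the inner and leaf horn inclusions, together with their pushout-products with normal monomorphisms, to handle the simplicial enrichment. The first step is a combinatorial check that a morphism has the right lifting property against all covariant anodyne maps precisely when it is a dendroidal left fibration. The second step is to show that, among operadic fibrations over $X$, the $S$-local objects are exactly those additionally lifting against $S$, hence against all covariant anodyne maps. The key input here is that a dendroidal left fibration is automatically an operadic fibration -- the dendroidal analogue of the fact that left fibrations of simplicial sets are isofibrations -- so that the two conditions combine to identify the fibrant objects with the dendroidal left fibrations. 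The characterization (3) of fibrations between fibrant objects follows from the relative version of this analysis, together with the general fact that in a localization a map of fibrant objects is a fibration exactly when it is one in the ambient structure.

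The hard part will be the fibrewise characterization of weak equivalences in (4). By the general theory, a map $E\to B$ of dendroidal left fibrations over $X$ is a covariant weak equivalence if and only if it induces weak equivalences on derived mapping spaces into every local object; the task is to show this is detected on the fibres $E_c\to B_c$. That a covariant equivalence restricts to fibrewise weak homotopy equivalences I would obtain by pulling back along the objects $c\colon\eta\to X$ and verifying that the fibre of a dendroidal left fibration computes the expected homotopy fibre. The reverse implication -- that a fibrewise weak homotopy equivalence is already a covariant equivalence -- is where the genuine work lies: it requires a recognition principle for covariant equivalences, which I would prove by a skeletal induction over the cells of $X$ reducing the comparison of mapping spaces to the corollas, where the leaf horn fillings identify the relevant mapping spaces with the fibres. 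This is the dendroidal counterpart of the recognition theorem for left fibrations of simplicial sets, and I expect it to be the main technical obstacle.
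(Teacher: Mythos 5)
This theorem is not proved in the paper at all: it is recalled as background in Section 1, with the proof outsourced to the citations \cite{He:AOIO} and \cite{HeMo:SDHT}, so your proposal can only be measured against those sources. Your overall architecture --- slice the operadic model structure of \cite{CM:DSMHO} over $X$, then left Bousfield localize at the leaf horn inclusions over $X$ --- is the standard route, and the purely formal steps are correct: existence of the localization by Smith's theorem for left proper combinatorial model categories, inheritance of left properness and of the cofibrations (giving item (1) and the ``moreover'' clause), and the general fact that fibrations between local fibrant objects coincide with ambient fibrations (the skeleton of item (3)). Note, however, that \cite{HeMo:SDHT} proceed in the opposite order: they construct the model structure directly from normal monomorphisms and a class of covariant anodynes via their general existence machinery, so that the fibrant objects are left fibrations essentially by construction, and they deduce the localization statement afterwards. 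Your ordering is legitimate but creates an extra obligation --- converting $S$-locality, which is a derived mapping-space condition, into strict lifting against horns --- that their ordering avoids.

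The genuine gaps are that the three statements carrying all the mathematical weight are asserted or deferred rather than proved. (i) What you call ``a combinatorial check'' --- that having the right lifting property against covariant anodynes, \emph{including} their pushout-products with normal monomorphisms, characterizes dendroidal left fibrations --- is not a check but the stability theorem for inner/leaf anodynes under pushout-product, the technical core of the entire theory; in the dendroidal setting this is considerably harder than its simplicial counterpart (the combinatorics run through the Boardman--Vogt tensor product, and closely related anodyne statements are still research-level, cf. \cite{B:RCPCCDA}). Without it you can neither identify the $S$-local objects with the left fibrations nor run the enriched lifting arguments your steps (2) and (3) rely on. (ii) The ``key input'' that a dendroidal left fibration over an arbitrary $X$ is automatically an operadic fibration is invoked by analogy with simplicial sets; dendroidally it requires proving that the relevant $J$-type inclusion classifying isomorphisms is leaf anodyne, which is itself a nontrivial lemma of the cited work, not a formality. (iii) Item (4) is only named: the ``recognition principle'' you propose to establish by skeletal induction \emph{is} the hard theorem of \cite{He:AOIO} and \cite{HeMo:SDHT}, not a reduction of it, and your sketch gives no indication of how the induction step would actually go (in particular how lifting against a dendroidal boundary $\partial T \to T$ is controlled by the fibres). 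So the plan is architecturally sound and matches the literature, but as a proof it reduces the theorem to its three hardest ingredients without supplying any of them.
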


\noindent The \emph{base change} along a morphism of dendroidal sets $f\colon X \to Y$ consists in the induced adjunction $$ \adjunction{f_!}{\dsets/X}{\dsets/Y}{f^*},$$ where $f_!$ is the functor that composes a morphism $A\to X$ with $f$ and $f^*$ is the functor that takes the pullback along $f$. After \cite{He:AOIO}, the base change is a Quillen adjunction with respect to the covariant model structure, and a Quillen equivalence if $f$ is a weak equivalence in the model structure for quasioperads.

\subsubsection{The projective model structure on simplicial algebras} Given a discrete operad $P$, we can endow the category of simplicial $P$-algebras with a model structure identifying simplicial algebras up to weak homotopy equivalence of spaces.

\noindent Call a morphism $\varphi\colon F\to G$ of simplicial $P$-algebras a \emph{projective weak equivalence}, resp. \emph{projective fibration}, if, for every object $c$ of $P$, the map $\varphi_c\colon F(c)\to G(c)$ is a weak homotopy equivalence, resp. a Kan fibration, of  simplicial sets.

\begin{theorem}[\cite{BM:RCORHA}] Projective weak equivalences and projective fibrations are the weak equivalences and the fibrations of a model category structure on $\alg_P(\ssets)$.
\end{theorem}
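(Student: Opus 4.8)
The plan is to obtain this model structure by \emph{right transfer} along the free--forgetful adjunction. Write $\mathbb{T}_P$ for the free $P$-algebra monad on the product category $\ssets^{C(P)}$, so that $\alg_P(\ssets)$ is its category of algebras and the forgetful functor $U\colon \alg_P(\ssets)\to \ssets^{C(P)}$ admits a left adjoint, the free functor $\mathbb{F}$. The category $\ssets^{C(P)}$ carries the objectwise (product) model structure, which is cofibrantly generated: its generating cofibrations $I$ and generating acyclic cofibrations $J$ are the maps $\partial\Delta^n\to\Delta^n$, resp. $\Lambda^n_k\to\Delta^n$, placed in a single color $c\in C(P)$. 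One wants to show that the two classes $\{f : Uf \text{ is a weak equivalence}\}$ and $\{f : Uf \text{ is a fibration}\}$ are the weak equivalences and fibrations of a cofibrantly generated model structure whose generating (acyclic) cofibrations are $\mathbb{F}I$ (resp.\ $\mathbb{F}J$). By the standard transfer criterion, this holds as soon as two conditions are met: a smallness condition ensuring the small object argument applies to $\mathbb{F}I$ and $\mathbb{F}J$, and an \emph{acyclicity} condition, namely that $U$ sends every relative $\mathbb{F}J$-cell complex to a weak equivalence.

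Next I would verify smallness. Because $P$ is a discrete operad all of whose operations have finite arity, the monad $\mathbb{T}_P$ is finitary, so $U$ creates filtered colimits; since every object of $\ssets^{C(P)}$ is small and $\mathbb{F}$ is a left adjoint, the domains of $\mathbb{F}I$ and $\mathbb{F}J$ are small relative to the respective cell complexes, and the small object argument produces the required functorial factorizations.

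The main work, and the expected main obstacle, is the acyclicity condition, which I would establish by the \emph{path object argument}. It suffices to produce, functorially in the would-be fibrant objects, a fibrant replacement together with a path object through which the diagonal factors as an acyclic cofibration followed by a fibration, both compatible with $U$. For the path object I use that $\ssets$ is cartesian closed and that the cotensor $F\mapsto F^K$ by a simplicial set $K$ lifts from $\ssets^{C(P)}$ to $\alg_P(\ssets)$: indeed $(-)^K$ preserves finite products, so $(F^K)(c)=F(c)^K$ inherits the structure maps $p_*$ of the $P$-action, and the factorization $F\xrightarrow{\sim} F^{\Delta^1}\twoheadrightarrow F\times F$ induced by $\partial\Delta^1\hookrightarrow\Delta^1\to\Delta^0$ gives a functorial path object whose underlying diagram is the standard one in each color. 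For the fibrant replacement I apply $\operatorname{Ex}^\infty$ objectwise; since $\operatorname{Ex}^\infty$ is a filtered colimit of right adjoints and filtered colimits commute with finite products in $\ssets$, it too preserves finite products and hence lifts to an endofunctor of $\alg_P(\ssets)$, providing a functorial fibrant replacement $F\to \operatorname{Ex}^\infty F$ which is an objectwise weak equivalence. With these two ingredients in hand, the standard path object argument shows that pushouts of maps in $\mathbb{F}J$, and their transfinite composites, are sent by $U$ to objectwise weak equivalences, which is exactly the acyclicity condition.

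Finally, with both conditions verified, the transfer theorem yields the desired cofibrantly generated model structure on $\alg_P(\ssets)$, whose weak equivalences and fibrations are precisely the maps that $U$ carries to objectwise weak homotopy equivalences and Kan fibrations --- that is, the projective weak equivalences and projective fibrations. I note that no hypothesis on $P$ beyond being a discrete operad is used, so the construction works for arbitrary $P$ and specializes, when $P=A$ is a category, to the projective model structure on $\mathsf{Fun}(A,\ssets)$.
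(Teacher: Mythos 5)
Your proof is correct and follows essentially the same route as the paper, which gives no argument of its own but cites Berger--Moerdijk: their proof is exactly this right transfer along the free--forgetful adjunction, with smallness from the finitary monad and acyclicity via the path object argument using the cotensor $(-)^{\Delta^1}$ and a product-preserving fibrant replacement functor such as $\operatorname{Ex}^\infty$. Your closing remark is also consistent with the paper: no $\Sigma$-freeness is needed, and the result holds for an arbitrary discrete colored operad $P$.
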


\noindent The above model structure is usually refered to as the \emph{projective model structure}.

\subsection{Notation and convention}\label{rememberrr}

\noindent As $i\colon \Delta\to \omg$ is fully faithful, we identify $\Delta$ with a full subcategory of $\omg$ and drop the $i$.

\noindent Similarly, as $i_!\colon \ssets \to \dsets$ is fully faithful, we identify $\ssets$ with a full subcategory of $\dsets$ and drop the $i_!$.

\noindent We identify $\omg$ with a full subcategory of $\Op$, and, as the Yoneda embedding is fully faithful, we identify $\omg$ with a full subcategory of $\dsets$. 

\noindent Given an edge $e$ of $T$, write $T_e^{\uparrow}$ for the biggest subtree of $T$ having $e$ as root. Observe that its set of leaves is contained in that of $T$.

\noindent For edges $\underline{e}=(e_1,\dots,e_n)$, we write $T_e^{\underline{e}}$ for the subtree of $T_e^{\uparrow}$ whose root is $e$ and whose leaves are precisely $\underline{e}$, if it exists. 
The subtree $T^{\underline{e}}_e$ does not depend on the ordering of the tuple $\underline{e}$.
\noindent When it exists, we identify $T^{\underline{e}}_e$ with the corresponding $n$-ary operation of the operad $T$. In particular, given a $T$-algebra $F$, we will write $$(T_e^{\underline{e}})_*\colon F({e_1})\times \dots \times F({e_n})\to F(e)$$ for its action on $F$. 

\noindent To sum up, a tree $T$ can denote:	\begin{itemize}
		\item the element $T$ in $\omg$;
		\item the discrete operad $\Omega(T)$;
		\item the representable dendroidal set $\Omega[T]$;
		\item an operation in a tree $R$, with $T\subseteq R$.
	\end{itemize}

\noindent For coherence, we write $\eta$, resp. $C_1$, for the more familiar $\Delta^0$, resp. $\Delta^1$, and stress the identifications $\eta\simeq 	\Delta^0$, $C_1\simeq \Delta^1$ when needed.
\section{Construction of the adjunction}\label{section4}
\noindent Fix a discrete colored operad $P$. In this section, we will define an adjunction, natural in $P$, of the form $$ \adjunction{\rho^P_!}{\dsets/\cN_d P}{\mathsf{Alg}_P(\ssets)}{\rho_P^*},$$ and we give explicit descriptions of both the left and right adjoint. 

\subsection{Definition of the left adjoint}\label{sec1subs1}

\noindent Any slice category of a presheaf category is itself a presheaf category: in particular, for any dendroidal set $X$ there is an isomorphism $$\dsets/X\simeq \mathsf{Fun}(\left(\omg/X\right)^\text{op}, \mathsf{Set}).$$ The category $\omg/X$ is the category of elements of $X$: its objects are of the form $(T,\alpha)$, where $T\in \omg$ and $\alpha\colon T \to X$ is a morphism of dendroidal sets, and a morphism $f\colon (T,\alpha)\to (S,\beta)$ is a map of trees $f\colon T \to S$ making the obvious triangle commute. 

\noindent When $X\simeq \cN_d P$, fully faithfulness of the nerve ensures that a morphism of dendroidal sets $T\to \cN_d P$ is just a morphism of operads $T\to P$, so let us write simply $\omg/P$  for $\omg/\cN_d P$, and refer to the objects in $\omg/P\xhookrightarrow{\cY} \dsets/\cN_d P$ as the \emph{representables} of $\dsets/\cN_d P$.

\begin{strategy*}
	We define $\rho_!^P$ as the left Kan extension of a functor $\rho^P\colon (\omg/\cN_d P)^\text{op}\to \alg_P(\ssets)$, where $\omg/\cN_d P$ is the category of elements of the presheaf $\cN_d P$. To define the functor $\rho^P$, we require that the family of functors ${\rho_!^P}_P$ is natural with respect to base change of operads, which means that, for any map of operads $\varphi\colon P \to Q$, the following diagram commutes:
	
	\begin{equation}\label{leftadj}
		\begin{tikzcd}
			\dsets/\cN_d P \arrow[r, "\rho^P_!"] \arrow[d, "(\cN_df)_!"'] & \mathsf{Alg}_P(\ssets) \arrow[d, "f_!"] \\
			\dsets/\cN_dQ \arrow[r, "\rho^Q_!"]                               & \mathsf{Alg}_Q(\ssets)     \ .            
		\end{tikzcd}
	\end{equation}
	
	\noindent These two requirements reduce the definition of $\rho_!^P$ to the construction of simplicial $T$-algebras $\cA^T$ for any tree $T$ and functorially in $T$, and it goes as follows.
\end{strategy*}

\begin{construction}\label{sirene}
Fix a tree $T$. For any edge $e$ of $T$, let $P(T_e^{\uparrow})$ be the set of subtrees of $T_e^{\uparrow}$ having $e$ as root. The elements of $P(T_e^{\uparrow})$ are of the form $T^{\underline{e}}_e$ for edges $\underline{e}=(e_1,\dots,e_n)$ for which $ T(e_1,\dots,e_n;e)\neq \emptyset$.  We endow $P(T_e^{\uparrow})$ with the partial order given by reversed inclusion of subtrees: there is an arrow $R\to S$ in $P(T_e^\uparrow)$ if and only if $S\subseteq R$.

\noindent Let $e$ be an edge of $T$, and define $\cA^T(e)$ as the nerve of the poset $P(T_e^{\uparrow})$, i.e. $$ \cA^T(e)\coloneqq \cN(P(T^{\uparrow}_e)).$$ For any choice of edges $e_1,\dots,e_n,e$ of $T$ for which $ T(e_1,\dots,e_n;e)\neq \emptyset$, we define the morphism 
$$\cA^T({e_1})\times\dots\times \cA^T({e_n})\longrightarrow \cA^T(e)$$ as the nerve of the map of posets
$$ P(T_{e_1}^{\uparrow})\times \dots \times P(T_{e_n}^{\uparrow})\longrightarrow P(T_e^{\uparrow}) \ \colon  (R_1,\dots,R_n)\mapsto T^{\underline{e}}_e\circ_{\underline{e}} (R_1,\dots,R_n),$$ 
which sends a $n$-tuple of subtrees $(R_1,\dots,R_n)$, with respective roots $\underline{e}=(e_1,\dots,e_n)$, to the tree obtained as the grafting of $(R_1,\dots,R_n)$ onto $T^{\underline{e}}_e$. Since operadic composition in $T$ is precisely the grafting of trees and a map of trees sends subtrees to subtrees, respecting the inclusion relation, this endows the family $\cA^T\coloneqq \{\cA^T(e)\}_e$ with a simplicial $T$-algebra structure. 

\noindent The construction is natural in $T$ in the following sense. Given a map of trees $f\colon S \to T$ and an edge $e$ of $S$, there is a morphism of simplicial sets $f^\cA_e\colon \cA^S(e)\to \cA^T(f(e))\simeq f^*(\cA^T)(e)$ which sends a subtree to its image via $f$. These maps are compatible with the operadic composition, as one has the equality 
$$ f^\cA_e\big(S^{\underline{e}}_e\circ_{\underline{e}} (R_1,\dots,R_n)\big)=S^{f(\underline{e})}_{f(e)}\circ_{f(\underline{e})} \big(f^\cA_{e_1}(R_1), \dots, f^\cA_{e_n}(R_n)\big)$$ for any choice of edges $\underline{e}=(e_1,\dots,e_n),e$ of $S$ for which $S(e_1,\dots,e_n;e)\neq \emptyset$. It follows that the family $f^\cA\coloneqq \{f^\cA_e\}_e$ assembles into a morphism of $S$-algebras $f^\cA\colon \cA^S\to f^*(\cA^T)$. Under the adjunction $(f_!,f^*)$, the transpose of $f^\cA$ yields a morphism of $T$-algebras  $$ f_*^\cA\colon f_!(\cA^S)\longrightarrow \cA^T.$$
\end{construction}

\noindent We are ready to define the functor whose left Kan extension will determine $\rho_!^P$.

\begin{deff}\label{rect1}
Let $P$ be a discrete operad. We define a functor $$ \rho^P\colon \omg/ P \longrightarrow \alg_P(\ssets)$$ as follows: for an object $(T,\alpha)$ in $\omg/ P$, we set $$ \rho^P(T,\alpha)\coloneqq \alpha_! (\cA^T).$$ 
Given another object $(S,\beta)$ and a morphism $h\colon (T,\alpha)\to (S,\beta)$ in $\omg/P$, the map $\rho^P(h)=h_*$ is defined via base change along  $\beta$, that is $$ h_* \coloneqq \beta_!(h^\cA_*)\colon\alpha_!(\cA^T)\longrightarrow \beta_!(\cA^S).$$ 
\end{deff}

\noindent Observe that any morphism of trees $f\colon S \to T$ can be regarded as a morphism $f\colon (S,f) \to (T,\id_T)$ in $\omg/ T$, in which case we have $f_*=f_*^\cA$.

\begin{deff}
Let $P$ be a discrete colored operad. The \emph{rectification functor} of $P$ is the functor $$\rho_!^{P}\colon \dsets/\cN_d P \to \alg_P(\ssets)$$ obtained as the left Kan extension of $\rho^P\colon \omg/P \to \alg_P(\ssets)$ along the Yoneda embedding $\omg/P\to \dsets/\cN_d P.$
\end{deff}

\noindent In particular, for a tree $T$, the simplicial $T$-algebra $\cA^T$ is the rectification of the identity morphism: $$ \rho_!^T(T,\id_T)=\cA^T.$$ 

\noindent More generally, we can describe the image of $\rho^P_!$ on a ge of $\dsets/\cN_d P$ more explicitly. Given a morphism $\alpha\colon T \to P$ and a object $c$ of $P$, let $\alpha/c$ be the poset whose objects are pairs $(\underline{e}, z),$ with $\underline{e}$ a tuple of edges of $T$ and $z$ an operation in $P(\alpha(\underline{e}); c)$, and where there is an arrow $$((\underline{e}^1,\dots,\underline{e}^n),z')\to (\underline{e},z) \ \text{ if and only if } \ z'=z\circ (\alpha(T^{\underline{e}^1}_{e_1}),\dots,\alpha(T^{\underline{e}^n}_{e_n})).$$ The simplicial set $\cN (\alpha/c)$ has the homotopy type of the union of the discrete multi-hom sets $P(\alpha(l_1),\dots,\alpha(l_m);c) $, where $(l_1,\dots,l_m)$ ranges over tuples of leaves of $T$.
The morphism $\alpha$ determines a representable $(T,\alpha)$ of $\dsets/\cN_d P$, and one has a canonical isomorphism $$\rho_!(T,\alpha)\simeq \cN(\alpha/c).$$

\noindent The adjoint functor theorem ensures the existence of an adjunction 
\begin{equation}\label{strun}
	\adjunction{\rho^P_!}{\dsets/\cN_d P}{\mathsf{Alg}_P(\ssets)}{\rho_P^*}.
\end{equation} 

\begin{deff}
	The  \emph{relative dendroidal nerve functor} of a discrete operad $P$ is the functor $\rho^*_P\colon \alg_P(\ssets)\to \dsets/\cN_d P$ right adjoint of the rectification functor.
\end{deff}

\begin{rmk}\label{recover}
	If $T$ is linear, that is, $T\simeq [n]\in \Delta$, for any object $i\in [n]$ there is an isomorphism $\cA^{[n]}(i)\simeq \Delta^i$, natural in $i$. In particular, given a discrete category $A$, the rectification functor $\rho^A_!$ of \Cref{rect1} coincides with Heuts-Moerdijk rectification functor $r_!^A\colon \ssets/\cN A \to \mathsf{Fun}(A,\ssets)$ defined in \cite[\S 4]{HM:LFHC}. By essential uniqueness of right adjoints, we can identify the functor $\rho^*_A$ with Lurie's relative nerve functor $\rho^*_A$ (\cite[\S 3.2.5]{Lu:HTT}). 
\end{rmk}

\noindent Let us give a more explicit description of the right adjoint.

\subsection{The relative dendroidal nerve functor}\label{sec1subsec2}

\noindent Consider a simplicial $P$-algebra $F$. The object $\rho^*_P(F)$ is a presheaf on $\omg/P$, henceforth to describe it it suffices to describe its values on the elements $(T,\alpha)$ in $\omg/P$. Fix one of such. By definition, one has $$\rho^*_P(F)_{(T,\alpha)}\coloneqq \Hom_{\dsets/\cN_dP}((T,\alpha), \rho^*_P(F))\simeq \Hom_{\alg_P}(\rho_!^P(T,\alpha), F).$$

\noindent Unwinding the definition, one has $$ \rho^*_P(F)_{(T,\alpha)}\simeq \mathsf{Hom}_{\alg_{P}(\ssets)} (\alpha_!(\cA^S),F) \simeq\mathsf{Hom}_{\alg_S(\ssets)}(\cA^S, \alpha^*F).$$ The elements in $\rho^*_P(F)_{(T,\alpha)}$ can be described as follows.

\begin{prop}\label{reduction1}
The following data are equivalent:
\begin{enumerate}
\item An element ${}^t\chi $ in $ (\rho^*F)_{(T,\alpha)} $.
\item A collection $$(\gamma_u \colon \Delta^k \to F({\alpha(e)}))_{u,e,k}$$ where $e$ ranges over the edges of $T$, $u$ over the non-degenerate $k$-simplices ${u\colon \Delta^k \to \cA^T(e)}$, $k$ ranges over natural numbers and the collection has to satisfy the following \emph{compatibility condition}: 
whenever $T(e_1,\dots,e_n;e)\neq \emptyset,$ for any face map $v\colon \Delta^k \to \Delta^{k'}$ fitting into a commutative square as the one on the left, the induced diagram on the right also commutes:
\begin{center}\begin{tikzcd}
\Delta^k \arrow[d, "v"'] \arrow[rr, "{(u_1,\dots,u_n)}"] &  & \cA^T({e_1})\times \dots \times \cA^T({e_n}) \arrow[d, "(T_e^{\underline{e}})_*"] &   & \Delta^k \arrow[d, "v"'] \arrow[rr, "{(\gamma_{u_1},\dots,\gamma_{u_n})}"] &  & F({\alpha(e_1)})\times \dots \times F({\alpha(e_n)}) \arrow[d, "(T_e^{\underline{e}})_*"] \\
 \Delta^{k'} \arrow[rr, "u"]                            &  & \cA^T(e)                                              &  & \Delta^{k'} \arrow[rr, "\gamma_u"]                                    &  & F({\alpha(e)})                                               
\end{tikzcd}
\end{center}
\end{enumerate}
\end{prop}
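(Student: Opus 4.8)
\noindent The plan is to exhibit a bijection, natural in all the data, between the two descriptions, starting from the identification
\[
(\rho^*_P F)_{(T,\alpha)} \;\simeq\; \Hom_{\alg_T(\ssets)}(\cA^T, \alpha^* F)
\]
recorded just above. Thus an element ${}^t\chi$ is the same as a morphism of simplicial $T$-algebras $\chi\colon \cA^T \to \alpha^* F$, which is precisely a family of maps of simplicial sets $\chi_e\colon \cA^T(e) \to F(\alpha(e))$, one per edge $e$ of $T$, such that for every operation $T_e^{\underline{e}}$ the square relating the two actions $(T_e^{\underline{e}})_*$ commutes. First I would set up the bridge between a single $\chi_e$ and a family $(\gamma_u)$: since $\cA^T(e) = \cN(P(T^{\uparrow}_e))$ is the nerve of a poset, every face of a nondegenerate simplex is again nondegenerate, so the nondegenerate simplices form a subcomplex closed under faces, and a map out of $\cA^T(e)$ is the same as its values $\gamma_u := \chi_e(u)$ on nondegenerate $u$, subject only to face-compatibility $\gamma_{d_i u} = d_i \gamma_u$, the degenerate values being forced by $\gamma_{s_j u} = s_j\gamma_u$.

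\noindent For the forward direction I would put $\gamma_u := \chi_e(u)$ and check the compatibility condition directly. Given a commuting square as in the statement, with $(T_e^{\underline{e}})_*\circ(u_1,\dots,u_n) = u\circ v$, I apply $\chi$: naturality of $\chi_e$ gives $\gamma_u\circ v = \chi_e(u\circ v)$, while commutation of the action square gives $\chi_e\big((T_e^{\underline{e}})_*(u_1,\dots,u_n)\big) = (T_e^{\underline{e}})_*\big(\gamma_{u_1},\dots,\gamma_{u_n}\big)$; combining the two yields exactly the required identity. This direction is essentially formal.

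\noindent The substance is the converse. Given a family $(\gamma_u)$ satisfying the condition, I would extend it to all simplices by $\gamma_{s\bar u} := s\,\gamma_{\bar u}$ and define $\chi_e$ by $u\mapsto \gamma_u$. The two things to verify are that each $\chi_e$ is a well-defined simplicial map and that the family is a map of $T$-algebras, and the point is that both are instances of the one compatibility condition. Applying it to the \emph{unary identity} operations $T_e^{(e)}$, whose action $(T_e^{(e)})_*$ is the identity of $\cA^T(e)$, forces precisely $\gamma_{d_i u} = d_i\gamma_u$ (again using that $d_i u$ is nondegenerate), making $\chi_e$ simplicial; applying it to genuine operations $T_e^{\underline{e}}$ forces the commutation of the action squares, i.e. that $\chi$ respects the $T$-action. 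One then checks the two constructions are mutually inverse.

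\noindent The step I expect to be the main obstacle is the bookkeeping with degeneracies inside the products $\cA^T(e_1)\times\dots\times \cA^T(e_n)$. A nondegenerate simplex of such a product need not have nondegenerate \emph{coordinates}, so in order to pin down the action square on \emph{all} simplices of the product the compatibility condition must be read with the $u_i$ ranging over arbitrary simplices and $\gamma_{u_i}$ taken through the degeneracy-extension above; a product simplex whose image under $(T_e^{\underline{e}})_*$ is degenerate is a common degeneracy of one whose image is nondegenerate, and is then handled by naturality. Making this precise rests on two structural facts I would isolate first: that grafting is injective and order-reflecting, so $(T_e^{\underline{e}})_*$ is the nerve of an order-embedding and hence carries jointly strictly increasing chains to nondegenerate simplices; and that each $\cA^T(e)$ is a nerve of a poset, so nondegenerate simplices are stable under faces. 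With these in hand the matching of the two descriptions becomes a finite, if slightly delicate, verification.
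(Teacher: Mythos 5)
Your proposal is correct, and for the substantive direction it takes a genuinely different route from the paper. The forward direction is the same in both (your $\chi_e\circ u$ is exactly the paper's composite of $u$ with the unit $\cA^T\to\alpha^*\alpha_!\cA^T$ and the transposed map). But for $(2)\Rightarrow(1)$ the paper does \emph{not} exploit the identification $(\rho^*F)_{(T,\alpha)}\simeq \Hom_{\alg_T(\ssets)}(\cA^T,\alpha^*F)$ that it records just before the proof: it builds the $P$-algebra map $\chi\colon\rho_!(T,\alpha)=\alpha_!(\cA^T)\to F$ directly, defining $\chi_c\colon\cN(\alpha/c)\to F(c)$ for \emph{every} object $c$ of $P$ by induction on simplicial degree and verifying $P$-equivariance at each stage; the fact that the two constructions are mutually inverse is left implicit. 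You instead stay entirely on the $T$-algebra side and reduce the problem to two poset-theoretic facts: that $\cA^T(e)$ is the nerve of a poset, so non-degenerate simplices are closed under faces and a simplicial map out of it is exactly a face-compatible assignment on non-degenerate simplices; and that grafting is an injective, order-reflecting map of posets, so $(T_e^{\underline{e}})_*$ preserves and reflects non-degeneracy. Your observation that face compatibility is precisely the instance of the compatibility condition at the unary identity operation $T_e^{(e)}$ is correct and tidy, and with your framing the mutual-inverse check is essentially definitional. What the paper's route buys is an explicit, color-by-color formula for the transposed map on all of $\cN(\alpha/c)$, which is the kind of concrete description it leans on later (\Cref{panevino} and the constructions of \Cref{section5}); what yours buys is a shorter argument in which the bijection is manifest.

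The subtlety you flag about degenerate coordinates is real, and it is the one point where the statement of the proposition is imprecise rather than your proof being incomplete. Read literally (with the $u_i$ in the compatibility squares required to be non-degenerate, since only those index the collection), the condition does not determine $\gamma_u$ on a non-degenerate simplex $u$ whose only preimage under an operation $(T_e^{\underline{e}})_*$ has degenerate coordinates: for such $u$ the literal condition constrains only the faces of $\gamma_u$, and the claimed equivalence of data would fail for a target algebra $F$ with enough simplices. Your strengthened reading, with the $u_i$ arbitrary and $\gamma$ extended over degeneracies by $\gamma_{s\overline{u}}:=s\,\gamma_{\overline{u}}$, is exactly what the paper's own proof adopts when it sets $\gamma_{u_i}\coloneqq\gamma_{\overline{u}_i}\sigma$ for degenerate $u_i$, so the two treatments are consistent; yours has the merit of making the necessity of that reading explicit.
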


\begin{proof}
\noindent Suppose to have the data in $(1)$. By adjunction, the element ${}^t\chi$ corresponds to a map of $P$-algebras $\chi \colon \rho^P_!(T,\alpha)\to F$. Consider the morphism of $T$-algebras 
$$\eta\colon \cA^T\to \alpha^*\alpha_! \cA^T $$
given by the unit of the adjunction $(\alpha_!,\alpha^*)$ evaluated at $\cA^T$. It consists of maps of simplicial sets $\eta_e\colon \cA^T(e) \to \cN(\alpha/\alpha(e))$ for every edge $e$ of $T$ which are natural with respect to multimorphisms in $P$. Given an edge $e$ of $T$, a natural number $k\geq 0$ and a non-degenerate $k$-simplex $u\colon \Delta^k \to \cA^T(e)$, we define $\gamma_u$ as the composition $$\Delta^k\xrightarrow{u} \cA^T(e) \xrightarrow{\eta_e}\cN(\alpha/\alpha(e))\xrightarrow{\chi_{\alpha(e)}} F({\alpha(e)}).$$ It is straightforward to check that the compatibility condition holds, since $\eta$ and $\alpha^*(\chi)= \chi_{\alpha(-)}$ are maps of algebras.

\noindent Let us now prove that $(2)$ implies $(1)$, and suppose to be given a collection $\{\gamma_u\}_{u,e,k}$ as in point $(2)$. We construct the morphism of simplicial $P$-algebras $\chi\colon \rho_!(T,\alpha)\to F$ by constructing the maps of simplicial sets $\{\chi_c\colon \rho_!(\alpha)_c\to F_c\}_{c\in C(P)}$ by induction on the simplices, checking, at the $\ell$-th step, that we have  constructed a map of $P$-algebras in $\ell$-truncated simplicial sets, that is, that for any choice of objects $c_1,\dots,c_m;c$ and operation $w\in P(c_1,\dots,c_m;c)$, the following diagram in $\ssets_{\leq \ell}$ commutes:
\begin{equation}\label{mapalg}
\begin{tikzcd}
\cN(\alpha/c_1)_{\leq \ell}\times \dots \times \cN(\alpha/c_m)_{\leq \ell} \arrow[rr, "\chi_{c_1}\times \dots \times \chi_{c_m}"] \arrow[d, "w_*"'] &  & {F({c_1})}_{\leq \ell}\times\dots\times {F({c_m})}_{\leq \ell} \arrow[d, "w_*"] \\
\cN(\alpha/c)_{\leq \ell} \arrow[rr, "\chi_c"']                                                                                         &  & {F(c)}_{\leq \ell}                                              
\end{tikzcd}
\end{equation}

\noindent Let $\ell=0$, let $c$ be a object of $P$ and consider an element $(\underline{e},z)$ in $\cN(\alpha/c)_0$, with $\underline{e}=(e_1,\dots,e_n)$ and $z\in P(\alpha(e_1),\dots,\alpha(e_n);c)$. For every $i\in \{1,\dots,n\}$, let ${e_i}\colon \Delta^0\to \cA^T({e_i})$ be the morphism selecting $\eta=e_i$. Since $F$ is a $P$-algebra, we have a map $z_*\colon F({\alpha(e_1)})\times \dots \times F({\alpha(e_n)}) \to F(c)$, and set $$ \chi_c(\underline{e},z)= z_*\circ (\gamma_{{e_1}},\dots,\gamma_{{e_n}}) \colon \Delta^0\longrightarrow F(c).$$ This defines a map of sets $$(\chi_c)_0\colon \cN(\alpha/c)_0\to (F_c)_0.$$ To check commutativity of \Cref{mapalg}, consider $w\in P(c_1,\dots,c_m;c)$, and elements $z_i \in P(\alpha(e^i_1),\dots,\alpha(e^i_{k_i}); c_i)$, $i=1,\dots,m$; commutativity is proven by the following computation $$ w_*(\chi_{c_1}(\underline{e}^1,z_1),\dots,\chi_{c_m}(\underline{e}^m,z_m))= w_*\circ ((z_1)_*(\gamma_{{e_1^1}},\dots,\gamma_{{e^1_{k_1}}}), \dots, (z_m)_*(\gamma_{{e^m_1}},\dots, \gamma_{{e^m_{k_m}}}))= $$ 
$$= (w\circ (z_1,\dots,z_m))_*(\gamma_{{e_1^1}},\dots, \gamma_{{e_{k_m}^m}} )= \chi_c((\underline{e}^1,\dots,\underline{e}^m, w_*(z_1,\dots,z_m))),$$ where the last equality holds because of the compatibility condition.

\noindent Let $\ell=1$ and consider a non degenerate $1$-simplex $p\colon (\underline{e}',z')\to (\underline{e},z) \in \cN(\alpha/c)_1$, with $z'=z\circ (\alpha(T^{\underline{e}^1}_{e_1}),\dots,\alpha(T^{\underline{e}^n}_{e_n}))$ for subtrees $T^{\underline{e}^i}_{e_i} $, $i=1,\dots,n$, with $\underline{e}'=(\underline{e}^1,\dots,\underline{e}^n)$. The subtree inclusion $\eta=e_i\subseteq T^{\underline{e}^i}_{e_i}$ corresponds to a $1$-simplex $u_i\colon \Delta^1\to \cA^T({e_i})$, and we define $$\chi_c(p)= z_*\circ (\gamma_{u_1},\dots,\gamma_{u_m}),$$ where, if $u_i$ is degenerate we set $\gamma_{u_i}\coloneqq \gamma_{e_i}\sigma$, where $\sigma$ is the degeneracy. 

\noindent If $p$ is degenerate, that is $p=z\sigma$, we set  $\chi_c(p)\coloneqq \chi_c(\underline{e},z)\sigma$; compatibility with face maps is given by the compatibility condition with respect to the diagrams \begin{center}
\begin{tikzcd}
\Delta^0 \arrow[d, "\partial^1"'] \arrow[rr, "{({e_1^i},\dots, {e^i_{n_i}})}"] &  & \cA^T({e^i_1})\times \dots \times  \cA^T({e^i_{n_i}}) \arrow[d, "({T^{\underline{e}^i}_{e_i}})_*"] \\
\Delta^1 \arrow[rr, "u_i"]                                                         &  & \cA^T({e_i})                                                                                    \\
\Delta^0 \arrow[u, "\partial^0"] \arrow[rr, "e_i"]                             &  & \cA^T({e_i}) \arrow[u, "\id"']                                                                         
\end{tikzcd}
\end{center} for $i\in \{1,\dots,n\}$, and it still follows from the compatibility condition and the structure of $P$-algebra of $F$ that the diagram \Cref{mapalg} commutes also at the level of $1$-simplices. 

\noindent Let $\ell\geq 2$ and $t$ be a $\ell$-simplex of $\cN(\alpha/c)$, $t\colon (\underline{e}^{(l)},z^{(\ell)})\to \dots \to (\underline{e}^{(1)},z^{(1)})\to (\underline{e}^{(0)},z^{(0)})$. The $\ell$-simplex $\chi_c(t)$ of $F(c)$ is defined as follows. For any $i\in \{1,\dots,n\}$, let $u_i\colon \Delta^\ell\to \cA^T({e_i})$ be a $\ell$-simplex corresponding to the chain of subtrees with root $e_i$ whose subsequent composition yields $z^{(i)}$. 
If each of the $u_i$'s is non-degenerate, then by hypothesis we have maps $\gamma_{u_i}$ and we set 
$$\chi_c(t)\coloneqq z_* (\chi_{u_1},\dots,\chi_{u_n}).$$ If some $u_i$ is degenerate, we define $\gamma_{u_i}\coloneqq\gamma_{\overline{u}_i}\circ\sigma$ for $\overline{u}_i$ the unique non-degenerate simplex and $\sigma$ a degeneracy such that $u_i=\overline{u}_i\circ \sigma$, and then define $\chi_c(t)$ in the same way.

\noindent It is a lengthy but straightforward computation to check that the simplicial relations involving face maps are satisfied thanks to the compatibility condition enjoyed by the family $\{\gamma_u\}_{e,u,k}$ and that those involving degeneracies are satisfied by construction, and just as readily, one checks that the diagram in \Cref{mapalg} commutes at the level of $\ell$-simplices.

\noindent We have hence defined a map of simplicial $P$-algebras $\chi\colon \rho_!(\alpha)\to F$. By adjunction, it yields an element in $\rho^*F_{(T,\alpha)}$, and this concludes the proof.
\end{proof}

\noindent When the domain of $\alpha$ is a linear tree, i.e. an element of $\Delta$, the above description can be further simplified.

\begin{coro}\label{panevino}
Cosnider a representable of $\dsets/\cN_d P$ of the form $([n], \alpha)$, $n\geq 0$, and let $F$ be a simplicial $P$-algebra. An element ${}^t\chi\in \rho^*(F)_{([n],\alpha)}$ is determined by a family of morphisms $$\{\gamma_u\colon \Delta^k \to F({\alpha(u(k))})\}_u$$ as  $u$ ranges over the maps $u\colon \Delta^k\to \Delta^n$, which satisfies the following property: given maps $u,u'$ and a face map $v\colon \Delta^k\to \Delta^{k'}$ making the diagram below on the left commute, the induced diagram on the right commutes as well,
\begin{center}
\begin{tikzcd}
\Delta^k \arrow[d, "v"'] \arrow[rd, "u"] &          &  & \Delta^k \arrow[d, "v"'] \arrow[r, "\gamma_u"] & F({\alpha(u(k)}) \arrow[d, "f_v"] \\
\Delta^{k'} \arrow[r, "u'"']             & \Delta^n &  & \Delta^{k'} \arrow[r, "\gamma_{u'}"]        & F({\alpha(u'(k'))})              
\end{tikzcd}
\end{center} 
where $f_v$ denotes the composition of the morphisms from $\alpha(u(k))$ to $\alpha(u'(k'))$.
\end{coro}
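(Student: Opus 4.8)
The plan is to deduce this directly from \Cref{reduction1} by specializing to the linear tree $T=[n]$ and simplifying the two ingredients that enter its statement: the simplicial sets $\cA^{[n]}(e)$ and the operadic action maps. First I would invoke \Cref{recover}, which gives a natural identification $\cA^{[n]}(i)\simeq\Delta^i$ for every edge $i\in[n]$. Since $[n]$ has only unary operations — one operation $i\to j$ for each pair $i\le j$ — the action maps appearing in \Cref{reduction1} are exactly the unary maps $\cA^{[n]}(i)\to\cA^{[n]}(j)$, and under the above identification these are the standard front-face inclusions $\Delta^i\hookrightarrow\Delta^j$. Correspondingly, the operadic action of $F$ is given by the maps $f_{i\to j}=\alpha(i\to j)_*\colon F(\alpha(i))\to F(\alpha(j))$, which compose, $f_{j\to\ell}\circ f_{i\to j}=f_{i\to\ell}$.

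Next I would reorganize the data. A non-degenerate $k$-simplex of $\cA^{[n]}(i)\simeq\Delta^i$ is an injection $[k]\hookrightarrow[i]$; post-composing with $\Delta^i\hookrightarrow\Delta^n$ records it as a map $u\colon\Delta^k\to\Delta^n$ whose top vertex satisfies $u(k)\le i$. The key reduction is that the family $\{\gamma_u\}$ of \Cref{reduction1} is already determined by its values ``at the top vertex''. Concretely, applying the compatibility condition of \Cref{reduction1} with $v=\id$ and the unary operation $m\to i$, where $m=u(k)$, shows that the datum attached to a simplex of $\cA^{[n]}(i)$ with top vertex $m$ is the image under $f_{m\to i}$ of the datum attached to the same simplex viewed in $\cA^{[n]}(m)\simeq\Delta^m$. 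Hence the entire family is equivalent to the collection of maps $\gamma_u\colon\Delta^k\to F(\alpha(u(k)))$ indexed by arbitrary $u\colon\Delta^k\to\Delta^n$, with degenerate $u$ giving degenerate $\gamma_u$ exactly as in the proof of \Cref{reduction1}; this is precisely the indexing set of the statement.

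It then remains to match the compatibility conditions. I would show that the condition of \Cref{reduction1}, which for $T=[n]$ couples a face map $v$ with a unary action $\cA^{[n]}(i)\to\cA^{[n]}(j)$, becomes — once the action has been absorbed through the previous reduction — exactly the stated square: for a face map $v\colon\Delta^k\to\Delta^{k'}$ and maps $u,u'$ with $u=u'v$ one has $u(k)=u'(v(k))\le u'(k')$, so there is an induced morphism $\alpha(u(k))\to\alpha(u'(k'))$ in $P$, whose action $f_v$ makes the right-hand square commute. Evaluating the \Cref{reduction1} condition at the minimal codomain $j=u'(k')$ recovers the stated condition, and every higher instance is obtained from it by post-composing with the relevant $f_{q\to j}$, so the two conditions are equivalent.

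The main obstacle I expect is the two-way matching of the compatibility conditions. The reduction to top-vertex data uses the $v=\id$ instances of the \Cref{reduction1} condition and is immediate; the work lies in showing conversely that the single face-map condition of the statement regenerates every instance of the \Cref{reduction1} condition. Here one fixes a general square, writes $p=u_1(k)$ and $q=u(k')$, checks $p\le q$ and that the stated condition holds for the pair viewed at the minimal codomain $q$, and then recovers the full-codomain instance by post-composing with $f_{q\to j}$ and using $f_{q\to j}\circ f_{p\to q}=f_{p\to j}$. Keeping straight which standard simplex each face is regarded in, together with the convention on degenerate simplices inherited from \Cref{reduction1}, is the only delicate point; granting it, the equivalence with elements ${}^t\chi\in\rho^*(F)_{([n],\alpha)}$ is immediate from \Cref{reduction1}.
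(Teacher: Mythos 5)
Your proof is correct and follows essentially the same route as the paper's: both specialize \Cref{reduction1} to the linear tree via the identification $\cA^{[n]}(i)\simeq\Delta^i$ of \Cref{recover}, match the indexing sets through the unique factorization of a map $\Delta^k\to\Delta^n$ through its top vertex $\Delta^{u(k)}\hookrightarrow\Delta^n$, and translate the two compatibility conditions into one another. Your write-up is in fact somewhat more explicit than the paper's — in particular the reduction of the \Cref{reduction1} data to top-vertex simplices via the $v=\id$ instances, and the converse regeneration of every instance of the \Cref{reduction1} condition by post-composing with $f_{q\to j}$ and using $f_{q\to j}\circ f_{p\to q}=f_{p\to j}$ — steps the paper compresses into a brief ``vice versa''.
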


\begin{proof} This follows from \Cref{reduction1} and the following considerations. First, as observed in \Cref{recover}, there is a natural isomorphism $\cA^{[n]}(j)\simeq \Delta^j$ for any $j\in [n]$. Second, any map of simplicial sets $w\colon \Delta^k \to \Delta^n$ uniquely factors through the inclusion $i\colon \Delta^{w(k)}\hookrightarrow \Delta^n$, $i(x)=x$, and writes as the composition $\Delta^k \xrightarrow{\overline{w}}\Delta^{w(k)}\simeq \cA^{[n]}({w(k)})\xhookrightarrow{i}\Delta^n$. 

\noindent Given a face map $v\colon \Delta^k\to \Delta^{k'}$, the commutative triangle $u=u'\circ v$ corresponds to the commutative square $ z\circ \overline{w}= \overline{w'}\circ v,$ where $z\colon \cA^{[n]}({w(k)})\to \cA^{[n]}({w'(k')})$ is the image of the arrow $w(k')\leq w'(k')$ via the functor $\cA^{[n]}\colon [n]\to \ssets$, and commutativity of the square on the right is just the compatibility condition of \Cref{reduction1} applied to $ z\circ \overline{w}= \overline{w'}\circ v,$.

\noindent Vice versa, if we have a map $u\colon \Delta^n \to \cA^{\Delta^n}(j)\simeq \Delta^j$, then we produce a morphism $\tilde{u}\colon \Delta^n \to \Delta^k$ by postcomposing with the same natural inclusion $\Delta^j \hookrightarrow \Delta^k $.
\end{proof}

\noindent We deduce the immediate simplification of the above criterion.

\begin{coro}\label{evenmore}
In the situation of \Cref{panevino}, an $n$-simplex $\chi\in (\rho^*F)_{\alpha}$ is completely determined by a sequence of simplices $\chi_i\colon \Delta^i \to F_{\alpha(i)}$, for $i\in \{0,1,\dots,n\}$, such that $f_i (\chi_{i-1})=d_i \chi_i$.
\end{coro}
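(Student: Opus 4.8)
The plan is to read \Cref{panevino} through the identification of the simplices of $\rho^*_P(F)$ over a linear base with natural transformations of functors out of $[n]$, and then to use that such a natural transformation is pinned down by its components together with naturality on the generating consecutive arrows alone.

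First I would recall from \Cref{sec1subsec2} that, for the linear tree $[n]$, one has $\rho^*_P(F)_{([n],\alpha)}\simeq \Hom_{\alg_{[n]}(\ssets)}(\cA^{[n]},\alpha^*F)$, and that $\alg_{[n]}(\ssets)=\mathsf{Fun}([n],\ssets)$ since $[n]$ is a category. Thus an $n$-simplex $\chi$ is precisely a natural transformation $\cA^{[n]}\Rightarrow\alpha^*F$. By \Cref{recover} the source sends $i\mapsto\cA^{[n]}(i)\simeq\Delta^i$, the transition map attached to $i-1\le i$ being the initial-segment inclusion $\Delta^{i-1}\hookrightarrow\Delta^i$, i.e. the coface $d^i$; the target sends $i\mapsto F_{\alpha(i)}$ with transition maps $f_i=F(\alpha(i-1\le i))$. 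I would then let $\chi_i\colon\Delta^i\to F_{\alpha(i)}$ be the $i$-th component of this natural transformation, which in the notation of \Cref{panevino} is $\gamma_{[i]}$ for $[i]\colon\Delta^i\hookrightarrow\Delta^n$ the initial-segment inclusion.

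The stated relation is then just naturality on the generating arrow $i-1\to i$: the corresponding square reads $f_i\circ\chi_{i-1}=\chi_i\circ d^i=d_i\chi_i$. Equivalently, it falls out of the compatibility condition of \Cref{panevino} applied to the factorization $[i-1]=[i]\circ d^i$, for which the structure map $f_v$ is exactly $f_i$.

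The heart of the matter---and the only step I expect to require genuine care---is completeness: that the full natural transformation (equivalently the entire family $\{\gamma_u\}$ of \Cref{panevino}) is recovered from $(\chi_i)_i$ and that the consecutive relations are the sole constraints. For the reconstruction I would observe that every nondegenerate $u\colon\Delta^k\hookrightarrow\Delta^n$ factors as $\Delta^k\xrightarrow{\bar u}\Delta^m\xhookrightarrow{[m]}\Delta^n$, with $m=u(k)$ and $\bar u$ a face map preserving the top vertex; feeding \Cref{panevino} the data $u'=[m]$, $v=\bar u$---for which $f_v=\id_{F_{\alpha(m)}}$, since both top vertices are sent to $m$---forces $\gamma_u=\chi_m\circ\bar u$, while degenerate $u$ are fixed by the nondegenerate ones through the degeneracy operators of $F$, exactly as in the proof of \Cref{reduction1}. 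Conversely, I must verify that an arbitrary family $(\chi_i)$ satisfying only $f_i\chi_{i-1}=d_i\chi_i$ does assemble into an honest natural transformation; the key observation is that every arrow $i\le j$ of $[n]$ is a composite of consecutive arrows, so its naturality square is the horizontal pasting of the consecutive squares and hence commutes automatically. The remaining work is pure bookkeeping: matching the coface/face indexing (the inclusion $\Delta^{i-1}\hookrightarrow\Delta^i$ is $d^i$, and precomposition with $d^i$ is $d_i$) and handling degenerate simplices consistently. Once this is in place the assignment $\chi\mapsto(\chi_i)_i$ is the desired bijection.
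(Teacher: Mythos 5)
Your proof is correct and is, in essence, the argument the paper leaves implicit: the paper gives no separate proof of this corollary, presenting it as an "immediate simplification" of \Cref{panevino}, and your deduction—viewing a simplex over $([n],\alpha)$ as a natural transformation $\cA^{[n]}\Rightarrow\alpha^*F$ of functors $[n]\to\ssets$, recovering each $\gamma_u$ from $\chi_{u(k)}$ via the initial-segment factorization (where $f_v=\id$), and reducing all compatibility squares to the consecutive ones by horizontal pasting—is precisely that simplification carried out in detail. The care you take with degenerate simplices and with the coface/face indexing ($d^i$ versus $d_i$) matches the conventions of \Cref{reduction1} and \Cref{panevino}, so nothing is missing.
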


\noindent We conclude with the following important property of the relative dendroidal nerve functor.

\begin{lemma}\label{freegaza}
Let $F$ be a simplicial $P$-algebra, $c$ a object of $P$, and denote by $(\rho^*F)_c$ the fibre of $\rho^*F$ over $c$. There is an isomorphism of simplicial sets $$(\rho^*F)_c \simeq F(c)$$ natural in $P$-algebras, meaning that, given a map of $P$-algebras $\varphi\colon F \to G$, the induced map between the fibres $\rho^*\phi_c\colon \rho^*F_c \to \rho^*G_c$ is the map $\varphi_c\colon F(c) \to G(c)$.
\end{lemma}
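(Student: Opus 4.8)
The plan is to compute the fibre directly, degree by degree, and to observe that the constancy of the structure map collapses the combinatorial data of \Cref{evenmore} to a single simplex. First I would unwind what the fibre is as a simplicial set. The object $c$ of $P$ is a vertex $c\in C(P)=(\cN_d P)_\eta$, i.e. a morphism $\Omega[\eta]\to \cN_d P$, and by definition $(\rho^*F)_c$ is the pullback $\Omega[\eta]\times_{\cN_d P}\rho^*F$ in $\dsets/\cN_d P$; under the canonical identification $\dsets/\Omega[\eta]\simeq \ssets$ this is a simplicial set. Since $\Omega[\eta]$ is supported on linear trees, an $n$-simplex of $(\rho^*F)_c$ is precisely an $n$-simplex of $\rho^*F$ whose image in $(\cN_d P)_{[n]}$ is the $n$-fold degeneracy of $c$, namely the constant operad map $\mathrm{const}_c\colon [n]\to P$. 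Using the fibering $(\rho^*F)_{[n]}=\coprod_{\alpha\colon[n]\to P}(\rho^*F)_{([n],\alpha)}$ over $(\cN_d P)_{[n]}$, this yields a natural identification $\big((\rho^*F)_c\big)_n\simeq (\rho^*F)_{([n],\mathrm{const}_c)}$.

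Next I would apply \Cref{evenmore} with $\alpha=\mathrm{const}_c$. As $\alpha$ is constant, every edge satisfies $\alpha(i)=c$ and every connecting unary operation is $\id_c$, so the maps $f_i$ act on $F$ as the identity of $F(c)$. The admissibility relation $f_i(\chi_{i-1})=d_i\chi_i$ of \Cref{evenmore} therefore reads $\chi_{i-1}=d_i\chi_i$, so the whole sequence $(\chi_0,\dots,\chi_n)$ is forced by its top simplex $\chi_n\in F(c)_n$ through iterated last-face maps, and conversely any $\chi_n\in F(c)_n$ defines an admissible sequence. This gives a bijection $\big((\rho^*F)_c\big)_n\xrightarrow{\ \sim\ }F(c)_n$, $\chi\mapsto \chi_n$, for every $n$.

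It then remains to check that these degreewise bijections assemble into an isomorphism of simplicial sets and are natural in $F$; this is where the only real bookkeeping lies. For a simplicial operator $\theta\colon[m]\to[n]$, precomposition identifies the datum $\{\gamma_u\}$ of $\theta^*\chi$ with $\{\gamma_{\theta u}\}$, and in the constant case the compatibility condition of \Cref{panevino} degenerates (all $f_v$ being identities) to $\gamma_\theta=\theta^*\gamma_{\id_{[n]}}=\theta^*\chi_n$, so the top component of $\theta^*\chi$ is $\theta^*(\chi_n)$ and the assignment $\chi\mapsto \chi_n$ commutes with all face and degeneracy maps. For naturality, given a morphism of $P$-algebras $\varphi\colon F\to G$, the construction in \Cref{reduction1} shows that postcomposition by $\varphi$ acts on the family $\{\gamma_u\}$ through the components of $\varphi$; in the constant case the top component of $\rho^*\varphi(\chi)$ is therefore $\varphi_c\circ\chi_n$, so under the isomorphisms above the induced map $(\rho^*F)_c\to(\rho^*G)_c$ is exactly $\varphi_c\colon F(c)\to G(c)$. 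The main (and essentially only) subtlety is this verification of compatibility with the simplicial structure and with maps of algebras, everything else being a direct consequence of the constancy of $\mathrm{const}_c$ forcing all connecting operations to act as identities.
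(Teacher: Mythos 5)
Your proposal is correct and follows essentially the same route as the paper: both identify the $n$-simplices of the fibre with the simplices of $\rho^*F$ lying over the constant map $\mathrm{const}_c\colon [n]\to P$ (the paper phrases this via the enriched hom $\hom_{\cN_d P}(\{c\},\rho^*F)$, you via the objectwise pullback, which amounts to the same thing), and then both invoke \Cref{panevino} and \Cref{evenmore} to collapse such a simplex to a single simplex of $F(c)$. If anything, you supply details the paper leaves implicit, namely that constancy forces $f_i=\id$ so the relation $f_i(\chi_{i-1})=d_i\chi_i$ determines the whole sequence from $\chi_n$, together with the compatibility with simplicial operators and the naturality in $P$-algebras, which the paper dismisses with ``one can check''.
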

 
\begin{proof}

\noindent Given a morphism of dendroidal sets $E\to X$ and an object $c$ of $X$, the fibre of $E\to X$ over $c$ is the simplicial set $E_c$ obtained as the pullback
\begin{center}
	\begin{tikzcd}
		E_c \arrow[r]\arrow[d] & E \arrow[d] \\
		\eta \arrow[r, "\{c\}"] & X
	\end{tikzcd}
\end{center}
\noindent where $\{c\}\colon \eta \to X$ is the map selecting $c$ as object. If $E\to X$ is a dendroidal left fibration, the map $E_c\to \eta$ is a left fibration of simplicial sets as well; as $\eta$ is a Kan complex, the map $E_c	\to \eta$ is also a right fibration, which means that $E_c$ is a Kan complex.

\noindent By definition, for any element $(E,p)$ in $\dsets/\cN_d P$, there is a canonical isomorphism of simplicial sets $$E_c\simeq \hom_{X}(\{c\}, E),$$ where  $\{c\}\colon \eta=\Delta^0 \to \cN_d P$ is the map selecting $c$.  In particular, the fibre of $\rho^*F$ over $c$ can be described as the pullback $$\rho^*F_c= \hom_{\cN_d P}(\{c\}, \rho^*F);$$ an $n$-simplex of $\hom_{\cN_d P}(\{c\}, \rho^*F)$ is a  morphism $\Delta^n\to \rho^*F$ over $\cN_d P$, where the map ${\Delta^n \to \cN_d P}$ is given by the composition $\Delta^n \to \Delta^0=\eta \xrightarrow{c} \cN_d P$. By \Cref{panevino} and \Cref{evenmore}, any such simplex is determined by a single element $x_n \colon \Delta^n \to F(c)$, and one can check that the bijection is natural in $\Delta$. This concludes the proof.
\end{proof}

\section{The Quillen adjunction}\label{section5}

\noindent Fix a discrete operad $P$ and write $\rho_!$, resp. $\rho^*$, for the functor $\rho_!^P$, resp. $\rho^*_P$. The main goal of this section is to prove that the pair $(\rho_!, \rho^*)$ is a Quillen adjunction with respect to the covariant model structure for dendroidal left fibrations and the projective model structure on simplicial $P$-algebras. We will prove this in \Cref{adjj}, and our strategy consists in showing that $\rho^*$ is a right Quillen functor. To this end, we need to better understand lifts of maps of the form $\rho^*(F)\to \rho^*(G)$ against dendroidal boundary and leaf horn inclusions. Let us first begin with some combinatorial construction.
\subsection{Root preserving faces and non-degenerate maximal chains for a tree}\label{matija} Let us start by giving some definitions.
\begin{deff}
	We say that a face map $\partial \colon S \to T$ is \emph{root preserving} if $\partial(r_S)=r_T$; for a face of linear trees $\partial\colon [i]\to [n]$, this means that $\partial(i)=n$.
\end{deff}

\noindent Let $T$ be a tree. A $n$-simplex $u$ of $\cA^T(r_T)$ can be written as a sequence of reversed subtree inclusions of the form $$u \colon \ \ T_0\to \dots \to T_{n-1} \to T_n,$$ and $u$ is non-degenerate if and only if in $T_i$ there is at least one more vertex than $T_{i+1}$.
  
\begin{deff} Let $T$ be a tree. A \emph{non-degenerate maximal chain} for $T$ is a simplex $$ u\colon \Delta^n \to \cA^T({r_T}), \quad u \colon \ \ T_0\to \dots \to T_{n-1} \to T_n $$ such that $T_0=T$, $T_n=\eta=r_T$ and each subtree $T_{i}$ is obtained from $T_{i+1}$ by adding exactly one vertex (and all its input edges). 
\end{deff}

\noindent Put differently, a non-degenerate maximal chain for $T$ is sequence of root preserving subtree inclusions $T_n\subseteq \dots \subseteq T_{1} \subseteq T_{0}$ in $\omg$ with $T_n=r_T$, $T_0=T$ and where each $T_i$ is obtained from $T_{i+1}$ by adding exactly one vertex. In particular, observe that in general a non-degenerate maximal chain may not be unique, but the number $n$ is, as $n= \# V(T)$.

\begin{example}\label{ex1}
For a tree $T$ of the form

\adjustbox{scale=0.5, center}{
\begin{tikzcd}
	{} \arrow[rd, no head] & {}                                                                 & {} \arrow[rd, no head]                         &                             & {} \\
	& \bullet \arrow[ru, no head] \arrow[rd, no head] \arrow[u, no head] &                                                & \bullet \arrow[ru, no head] &    \\
	&                                                                    & \bullet \arrow[ru, no head] \arrow[d, no head] &                             &    \\
	&                                                                    & {}                                             &                             &   
	\end{tikzcd}}
there are two non-degenerate maximal chains, given by

\adjustbox{scale=0.8, center}{
\begin{tikzcd}
	{} & {} \arrow[d, no head]                                               & {} \arrow[rd, no head]                         &                             & {}                     & {} \arrow[rd, no head]                         &                             & {}                     &                                                &              &    &                       \\
	& \bullet \arrow[ru, no head] \arrow[rd, no head] \arrow[lu, no head] &                                                & \bullet \arrow[ru, no head] & {} \arrow[rd, no head] &                                                & \bullet \arrow[ru, no head] & {} \arrow[rd, no head] &                                                & {}           &    &                       \\
	&                                                                     & \bullet \arrow[ru, no head] \arrow[d, no head] & {} \arrow[r]                & {}                     & \bullet \arrow[ru, no head] \arrow[d, no head] & {} \arrow[r]                & {}                     & \bullet \arrow[ru, no head] \arrow[d, no head] & {} \arrow[r] & {} & {} \arrow[d, no head] \\
	&                                                                     & {}                                             &                             &                        & {}                                             &                             &                        & {}                                             &              &    & {}                   
	\end{tikzcd}}
\noindent and
\adjustbox{scale=0.8, center}{
\begin{tikzcd}
	{} & {} \arrow[d, no head]                           & {} \arrow[rd, no head]                                             &                             & {} \arrow[rd, no head] & {} \arrow[d, no head]                           & {}                                             &              &                        &                                                &              &    &                       \\
	& \bullet \arrow[ru, no head] \arrow[lu, no head] &                                                                    & \bullet \arrow[ru, no head] &                        & \bullet \arrow[rd, no head] \arrow[ru, no head] &                                                & {}           & {} \arrow[rd, no head] &                                                & {}           &    &                       \\
	&                                                 & \bullet \arrow[lu, no head] \arrow[ru, no head] \arrow[d, no head] & {} \arrow[r]                & {}                     &                                                 & \bullet \arrow[ru, no head] \arrow[d, no head] & {} \arrow[r] & {}                     & \bullet \arrow[ru, no head] \arrow[d, no head] & {} \arrow[r] & {} & {} \arrow[d, no head] \\
	&                                                 & {}                                                                 &                             &                        &                                                 & {}                                             &              &                        & {}                                             &              &    & {}                   
	\end{tikzcd}}

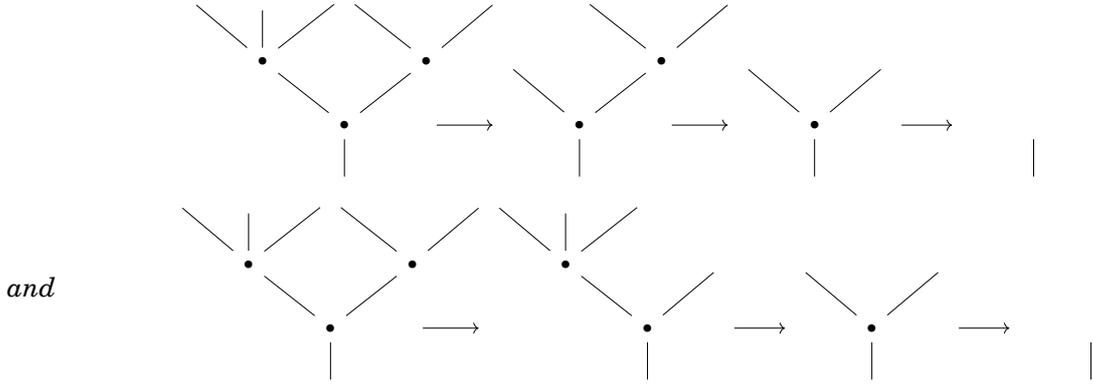
\captionof{figure}{The two non-degenerate maximal chains for $T$}\label{figure2}
\end{example}

\begin{rmk} When defining the partial order on $P(T^\uparrow_e)$ (\Cref{sirene}), we could have chosen the opposite order, that is, the natural inclusion of subtrees, which would make the definition above more natural. However, by choosing \emph{reverse} inclusion we obtain a correspondence between dendroidal left fibrations and \emph{left}, and not \emph{right}, fibrations of simplicial sets, see \Cref{projfibleftfib}.
\end{rmk}

\noindent Consider a root preserving face map $\partial\colon S \to T$. We still denote by $\partial$ the map of simplicial sets $$ \partial\colon \cA^S(r_S)\longrightarrow \cA^T(r_T)$$ induced by the map of posets sending a subtree in $P(S_{r_S}^\uparrow)\to P(T_{r_T}^\uparrow)$ which sends a subtree in $S$ to its image in $T$ via $\partial$. 

\noindent The next result relates root preserving face maps and non-degenerate maximal chains of trees. It will be fundamental in the proofs of \Cref{reducelift} and \Cref{projfibleftfib}, as well as in the previous constructions \Cref{construction1} and \Cref{construction2}.

\begin{lemma}\label{lemmaaux}
Let $\overline{u}\colon \Delta^n \to \cA^T({r_T})$ be a non-degenerate maximal chain and $d\colon \Delta^i \to \Delta^n$ a root preserving face. 

\begin{enumerate}[label=(\roman*)]
	\item There exist a tree $ T(d,\overline{u})$, a root preserving dendroidal face $\partial\colon T(d,\overline{u}) \to T$ and a non-degenerate simplex $u\colon \Delta^i\to \cA^{T(d,\overline{u})}(r_{T(d,\overline{u})})$ such that the following diagram commutes
	
	\begin{equation}\label{febbraio}
		\begin{tikzcd}
			\Delta^i \arrow[r, "d"] \arrow[d, "u"']                                 & \Delta^n \arrow[d, "\overline{u}"] \\
			{\cA^{T(d,\overline{u})}({r_{T(d,\overline{u})}}}) \arrow[r, "\partial"] & \cA^T({r_T})                       
		\end{tikzcd}
	\end{equation} and such that the triple $(T(d,\overline{u}), \partial,u)$ is initial amongst the triples satisfying the properties of above. In particular, such $(T(d,\overline{u}), \partial,u)$ is essentially unique.
	\item If $d(i-1)<n-1$ or $i=1<n$, the face map $\partial\colon T(d,\overline{u})\to T$ is not an isomorphism of trees. In other words, the tree $T(d,\overline{u})$ and the face map $\partial$ contribute to the colimit defining the dendroidal set $\partial T$.

\end{enumerate}

\end{lemma}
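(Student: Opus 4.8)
The plan is to read the tree $T(d,\overline u)$, the face $\partial$ and the simplex $u$ straight off the combinatorics of the maximal chain. Writing $\overline u$ as $T=T_0\supsetneq T_1\supsetneq\dots\supsetneq T_n=\eta$, where each inclusion $T_{j+1}\subseteq T_j$ forgets a single vertex, I first record the vertex $w_j\in V(T)$ lost in passing from $T_{j-1}$ to $T_j$, so that $V(T_j)=\{w_{j+1},\dots,w_n\}$ and $w_n$ is the root vertex. A root preserving face $d$ is given by $d(0)<\dots<d(i)=n$, and the composite $\overline u\circ d$ is the non-degenerate $i$-simplex $T_{d(0)}\supsetneq\dots\supsetneq T_{d(i)}=\eta$ of $\cA^T(r_T)$. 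I then organise the vertices of $T_{d(0)}$ into the $i$ \emph{blocks} $B_m=V(T_{d(m-1)})\setminus V(T_{d(m)})=\{w_{d(m-1)+1},\dots,w_{d(m)}\}$, $m=1,\dots,i$, which are precisely the batches of vertices collapsed at each step of the subchain.

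\emph{Construction and (i).} I define $T(d,\overline u)$ as the tree obtained from the external face $T_{d(0)}\subseteq T$ by contracting every inner edge of $T_{d(0)}$ whose two endpoints lie in a common block $B_m$; the map $\partial\colon T(d,\overline u)\to T$ is then the composite of these inner faces with the external inclusion $T_{d(0)}\hookrightarrow T$, and it is root preserving since contracting inner edges and including a root-containing subtree both fix $r_T$. I take $u$ to be the chain whose $k$-th term is the image in $T(d,\overline u)$ of $T_{d(k)}$ (that is, $T_{d(k)}$ with its within-block inner edges contracted); consecutive terms differ by the nonempty block $B_k$, so $u$ is non-degenerate, and $\partial\circ u=\overline u\circ d$ on vertices, hence as simplices. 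For initiality I consider the category of triples $(S,\partial',u')$ with $\partial'$ a root preserving face, $u'$ non-degenerate, and $\partial'\circ u'=\overline u\circ d$, morphisms being maps of trees over $T$ compatible with the chains. Given such a triple, the top term $u'(0)$ is a subtree of $S$ mapping onto $T_{d(0)}$ via $\partial'$, hence is a contraction of $T_{d(0)}$; realisability of the whole subchain forces this contraction to collapse only within-block edges, because collapsing an edge between two different blocks $B_m,B_{m'}$ (say $m<m'$) would merge two vertices that the intermediate subtree $T_{d(m)}$ separates, which is impossible as $\partial'$ is a monomorphism. Since $T(d,\overline u)$ collapses \emph{all} within-block edges it is the maximal such contraction, producing a unique (again by $\partial'$ monic) inner face $T(d,\overline u)\to u'(0)\hookrightarrow S$ compatible with all the data. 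This exhibits $(T(d,\overline u),\partial,u)$ as initial, whence essentially unique.

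\emph{Part (ii).} Here I only need that the hypotheses force at least one elementary face into $\partial$, i.e. that $T(d,\overline u)\not\cong T$. If $i=1<n$ there is a single block $B_1=V(T_{d(0)})$, so $T(d,\overline u)$ is the corolla obtained by contracting all inner edges of $T_{d(0)}$; as $T$ has $n\geq 2$ vertices this is a proper face. If instead $d(i-1)<n-1$, the last block $B_i=V(T_{d(i-1)})$ has $n-d(i-1)\geq 2$ vertices and, being all of the connected subtree $T_{d(i-1)}$, contains an inner edge; both endpoints of that edge lie in $B_i$, so it is contracted in forming $T(d,\overline u)$, and $\partial$ again loses at least one vertex. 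In either case $\partial$ is not an isomorphism, so $(T(d,\overline u),\partial)$ is a genuine proper face contributing to the colimit defining $\partial T$.

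\emph{Main obstacle.} I expect the initiality and essential-uniqueness clause of (i) to be the delicate point: one must fix the correct category of triples and, crucially, prove that any competing realisation $S$ can collapse only within-block inner edges, which is exactly where the monomorphism property of faces and the separation of blocks by the intermediate subtrees $T_{d(m)}$ do the work. The explicit construction, the commutativity of \eqref{febbraio}, and part (ii) are then comparatively routine block-counting arguments built on \Cref{sirene}.
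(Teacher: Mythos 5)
Your proof is correct, and it produces the same triple as the paper (as it must, by essential uniqueness), but the route differs in two respects. The paper constructs $T(d,\overline u)$ \emph{inductively} down the chain: at step $k$ it writes $\overline u_{d(k-1)}=\overline u_{d(k)}\circ(\overline v_1,\dots,\overline v_{m_k})$ and grafts onto $u_k$ the corollas obtained by contracting each $\overline v_j$; since the $\overline v_j$ are exactly the connected components of your block $B_k$, the cumulative effect is precisely your closed-form description (contract all within-block inner edges of $T_{d(0)}$). What your version buys is a sharper treatment of initiality: you characterize \emph{every} competing triple $(S,\partial',u')$ by showing that $u'(0)$ must be a contraction of $T_{d(0)}$ along a subset of within-block edges (the block-separation argument), and then exhibit the comparison map as the remaining contraction followed by $u'(0)\hookrightarrow S$; the paper instead asserts initiality inductively from initiality of the corolla contractions $\chi_j$, which is considerably terser. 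What the paper's induction buys is the explicit chain of root-preserving inclusions $u_i\subseteq\dots\subseteq u_0$ with the intermediate faces $\partial_k\colon u_k\to\overline u_{d(k)}$, which is the form in which the lemma is re-used in \Cref{construction1} and \Cref{construction2}. One imprecision worth fixing in your write-up: in the separation argument the contradiction does not really follow from $\partial'$ being a monomorphism; the correct reason is that $\partial'|_{u'(0)}$ is an inner face (a contraction), so the image of any subtree of $u'(0)$ is a union of contraction classes, and hence no image subtree can contain $w$ but not $v$ once $v$ and $w$ have been merged — monicity of $\partial'$ is what you need (and correctly invoke) for the \emph{uniqueness} of the comparison map, and the same contraction-class observation is what guarantees that your comparison map is compatible with the whole chains, not just their top terms. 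Part (ii) coincides with the paper's vertex-counting argument.
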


\begin{proof} Let us first prove point (i). We represent the non-degenerate maximal chain $\overline{u}$ as a chain of root preserving subtree inclusions $\overline{u}_n \subseteq \overline{u}_{n-1}\subseteq\dots \subseteq \overline{u}_0$, with $\overline{u}_n=r_T$, $\overline{u}_0=T$ and such that at every step we add exactly one vertex of the tree. 

\noindent Observe that the diagram in \Cref{febbraio} is in fact the nerve of the diagram of categories, and more precisely of posets, given by the following
\begin{equation}\label{the}
	\begin{tikzcd}
{[i]}\arrow[r, "d"] \arrow[d, "u"']                                 & {[n]} \arrow[d, "\overline{u}"] \\
		P(T(d,\overline{u})^\uparrow_{r_{T(d,\overline{u})}}) \arrow[r, "\partial"] & P(T^\uparrow_{r_T}) \ . 
	\end{tikzcd}
\end{equation}

\noindent We construct the triple $(T(d,\overline{u}), \partial, u)$ inductively. More precisely, we start by inductively constructing a chain of root-preserving inclusions of trees $u_i\subseteq u_{i-1}\subseteq \dots \subseteq u_0$, the base case being $k=i$, and dendroidal face maps ${\partial_k\colon u_k\to \overline{u}_{d(k)}}$ for any $k\in [i]$, such that:
\begin{enumerate}
	\item for every $k\in [i]$, the face $\partial_k$ is root preserving and $\partial(u_k) = \overline{u}_{d(k)}$, that is, the diagram in \Cref{the} commutes at the level of objects,
	\item the square
	\begin{equation}\label{commutdiag}
		\begin{tikzcd}
			u_k \arrow[r, hook] \arrow[d, "\partial_k"'] & u_{k-1} \arrow[d, "\partial_{k-1}"] \\
			\overline{u}_{d(k)} \arrow[r, hook]         & \overline{u}_{d(k-1)}              
		\end{tikzcd}
	\end{equation}
	commutes for any $k>0$, that is, the diagram in \Cref{the} commutes at the level of morphisms,
	\item at the $k^{\text{th}}$ step, the triple $(u_k, \overline{\partial}_k, u_k\to u_{k-1}\to \dots \to u_{i-1}\to u_i )$, where $\overline{\partial}_k$ is the dendroidal face given by the composition $ u_k\xrightarrow{\partial_k} \overline{u}_{d(k)} \subseteq T(d,\overline{u})$, is initial with respect to the tree $T$, the non-degenerate maximal chain $\overline{u}$ and the root preserving face map given by the composition $\Delta^{i-k}\hookrightarrow \Delta^i \xrightarrow{d} \Delta^n$, where the inclusion $\Delta^{i-k}\hookrightarrow \Delta^i$ is the face map defined by the assignment $[i-k]\ni j\mapsto j+k\in [i]$.
\end{enumerate} 
\noindent Observe that condition $(1)$ implies that $\partial_k$ is an inner face map, so in particular it also induces a bijection between the sets of leaves of the tree in the domain and in the codomain.

\noindent Let $k=i$. As $d$ is root preserving, we have $\overline{u}_{d(i)}=\overline{u}_n=\eta$, and we define  $u_i\coloneqq\eta$ and $\partial_i=\id \colon u_i\to \overline{u}_n$. This is clearly an initial inner face map (condition $(2)$ at this stage is empty). 

\noindent Consider $k=i-1$. As $\overline{u}$ is non-degenerate, the tree $\overline{u}_{d(i-1)}$ has at least one vertex. Let $\ell$ be the number of leaves of $\overline{u}_{d(i-1)}$ and $C_\ell$ the corolla with $\ell$ leaves. There exists an essentially unique inner face map from $ C_\ell \to \overline{u}_{d(i-1)}$, which is the identity if $d(i-1)=n-1$, and which is clearly initial amongst the inner face maps $S\to \overline{u}_{d(i-1)}$. We set $u_{i-1}\coloneqq C_\ell$, that is, the corolla obtained by contracting all the inner edges of $\overline{u}_{d(i-1)}$, and we define $\partial_{i-1}\colon u_{i-1}\to \overline{u}_{d(i-1)}$ as the unique inner face map just mentioned. By construction, the conditions $(1), (2)$ and $(3)$ are satisfied.

\noindent Let $0< k\leq i-1$, and suppose we have constructed a chain of root preserving inclusions $u_i\subseteq\dots \subseteq u_k$ satisfying the requirements; we construct $(u_{k-1},\partial_{k-1})$ as follows. Write $ \overline{u}_{d(k-1)}$ as the grafting $$ \overline{u}_{d(k-1)}= \overline{u}_{d(k)}\circ (\overline{v}_1,\dots,\overline{v}_{\ell_k}),$$ for some uniquely determined subtrees $\overline{v}_j$. Since $\overline{u}$ is non-degenerate, at least one $\overline{v}_j$ is different from $\eta$. For any $j=1,\dots,m_k$, if $\overline{v}_j\neq \eta$, we denote by $v_j$ the corolla with the same number of leaves as $\overline{v}_j$ and write $\chi_j$ for the unique inner face map $v_j \to \overline{v}_j$; we set $v_j=\eta$ and $\chi_j=\id$ otherwise.
We define  $$u_{k-1}\coloneqq u_k\circ (v_1,\dots,v_{m_k}), $$ and the face map $\partial_{k-1}$ as the grafting $$ \partial_{k-1}\coloneqq \partial_k \circ (\chi_1,\dots,\chi_{m_k}).$$ By construction, $\partial_{k-1}(u_{k-1})=\overline{u}_{d(k-1)}$ and the diagram in \Cref{the} commutes. Moreover, as each of the $\chi_j$ is initial and by inductive hypothesis $\partial_k$ is initial as well, we have that $\partial_{k-1}$ initial as well, as wanted.

\noindent In conclusion, we have shown that the triple given by $(T(d,\overline{u}), \partial, u)$ is initial, where $T(d,\overline{u})=u_0$, $\partial=\overline{\partial}_0\colon T(d,\overline{u})=u_0 \xrightarrow{\partial_0} \overline{u}_{d(0)} \subseteq \overline{u}_n = T $ and $u$ is the non-degenerate simplex defined by $u_0\to \dots u_{i-1}\to u_i$. It is straightforward to check that initiality implies essential uniqueness, so this concludes the proof of (i). 

\noindent Let us now address point (ii).  If $d(i-1)<n-1$, maximality of $\overline{u}$ ensures that in $\overline{u}_{d(i-1)}$ there are at least  two vertices, which is equivalent to say that $\overline{u}_{d(i-1)}$ has at least one inner edge. By construction, $\partial_{i-1}\colon u_{i-1}\to \overline{u}_{d(i-1)}$ is the face obtained by contracting all the inner edges of $\overline{u}_{d(i-1)}$, it is not an isomorphism. As $\partial_{i-1}=\partial_{|_{u_{i-1}}}$, the dendroidal face $\partial$ cannot be an isomorphism of trees either, as claimed. On the other hand, when $i=1$ the tree $T(d,\overline{u})=u_{0}$ is always a corolla, but if $n>1$, then $T$ has at least two vertices, which means that the face map $\partial\colon T(d,\overline{u})\to T$ cannot be an isomorphism of trees.
\end{proof}

\begin{example}
	Consider the tree $T$ of \Cref{ex1} and let $\overline{u}$ be the first non-degenerate maximal chain of \Cref{figure2}. Let $d\colon \Delta^2\to\Delta^3$ be the face map defined by $d(0)=0, d(1)=1,d(2)=3$. Then we have that $(T(d,\overline{u}),\partial, u)$ is determined by the non-degenerate simplex $u$ (in this case also maximal) given by 
	
	\adjustbox{scale=0.6,center}{
		\begin{tikzcd}
			& {} \arrow[rd, no head] & {} \arrow[d, no head]       & {} \arrow[ld, no head]                         &               &  &                        &                            &                        &  &         &                       \\
			&                        & \bullet \arrow[rd, no head] & {} \arrow[d, no head]                          & {}            &  & {} \arrow[rd, no head] & {} \arrow[d, no head]      & {} \arrow[ld, no head] &  &         &                       \\
			{T(d,\overline{u})\ = \ u_0 \ =} &                        &                             & \bullet \arrow[ru, no head] \arrow[d, no head] & {} \arrow[rr] &  & u_1 \ =                & \bullet \arrow[d, no head] & {} \arrow[rr]          &  & u_2 \ = & {} \arrow[d, no head] \\
			&                        &                             & {}                                             &               &  &                        & {}                         &                        &  &         & {}                   
	\end{tikzcd}}
 where the face map $\partial $ is given by the inner face map 
 
 \adjustbox{scale=0.6,center}{
 	\begin{tikzcd}
 		{} \arrow[rd, no head] & {} \arrow[d, no head]       & {} \arrow[ld, no head]                         &                             & {} \arrow[rd, no head] & {} \arrow[d, no head]                           & {} \arrow[rd, no head]     &                                                      & {} \\
 		& \bullet \arrow[rd, no head] & {} \arrow[d, no head]                          & {}                          &                        & \bullet \arrow[rd, no head] \arrow[ru, no head] &                            & \bullet \arrow[ld, "e", no head] \arrow[ru, no head] &    \\
 		&                             & \bullet \arrow[ru, no head] \arrow[d, no head] & {} \arrow[rr, "\partial_e"] &                        & {}                                              & \bullet \arrow[d, no head] &                                                      &    \\
 		&                             & {}                                             &                             &                        &                                                 & {}                         &                                                      &   
 \end{tikzcd}}
 \noindent obtained by contracting the inner edge $e$.
 
 \noindent Consider now the face map $d$ defined by $d(0)=0, d(1)=2, d(2)=3$, which does not fall in the hypothesis of point (ii) of \Cref{lemmaaux}. In this case we have $u_2=\eta$, $u_1=C_2$ and $u_2=T$ and $\partial=\id$.
\end{example}

\noindent We conclude this part with the following 
\begin{deff}\label{maxex}
Given a tree $T$ and a non-degenerate simplex $w\colon \Delta^k \to \cA^T({r_T})$, a \emph{maximal extension} of $w$ is a non-degenerate maximal chain $\overline{w}\colon \Delta^n \to \cA^T({r_T})$ together with a face map $d\colon \Delta^k\to \Delta^n$ such that $w=\overline{w}\circ d$ .
\end{deff}

\noindent Observe that, even if there are multiple maximal extensions of $w$, the face map $d\colon \Delta^k\to \Delta^n$ is unique. 

\subsection{Lifts against dendroidal boundaries}\label{lift1} Given a tree $T$, we write $\iota\colon \partial T \to T $ for the dendroidal boundary inclusion. If $T$ is the domain of a representable $(T,\alpha)$ in $\dsets/\cN_d P$, one obtains another element (no longer representable) $(\partial T, \alpha\iota)$ by precomposing with $\iota$. 

\noindent Given a simplicial $P$-algebra $F$, a tree $T$ and a non-degenerate maximal chain ${\overline{u}\colon \Delta^n \to \cA^T({r_T})}$ of $T$, in \Cref{construction1} we explain how to associate to a morphism $\chi\colon (\partial T, \alpha \iota)\to \rho^*(F)$ a map of simplicial sets $\chi_{\overline{u}}\colon \partial \Delta^n \to F({\alpha(r_T)})$. Recall that $\partial T$, resp. $\partial \Delta^n$, is defined as the colimit of the face maps into $T$, resp. $\Delta^n$.

\begin{construction}\label{construction1} Let $F, \overline{u}$ and $\chi$ be as above. To construct a morphism $(\partial\chi)_{\overline{u}}\colon \partial \Delta^n \to F({\alpha(r_T)})$ we define, for any face $d\colon \Delta^k\to \Delta^n$ and coherently on nested faces, a map of simplicial sets  $X_{\overline{u},d}\colon \Delta^k \to F({\alpha(r_T)})$.

\noindent Let us use the characterization of the dendrices of $\rho^*(F)$ detailed in \Cref{reduction1}. 
\noindent Suppose first that $d$ is a face satisfying the hypotheses of point (ii) of \Cref{lemmaaux}, and let $(T(d,\overline{u}), \partial,u)$ be the triple given by the lemma. The restriction of $\alpha\iota$ to the face $\partial\colon T(d,\overline{u})\to T$ yields a map $\chi_{u}\colon \Delta^k\to F({\alpha(\partial(r_{T(d,\overline{u})})})=F({\alpha(r_T)}) $, and we set $X_{\overline{u},d}\coloneqq \chi_u$.

\noindent If $d$ is root preserving and $d(k-1)=n-1$, let $\{e_1,\dots,e_m\}$ be the input edges of the root corolla of $T$, that is, the leaves of the tree $\overline{u}_{n-1}$. For every $i=1,\dots,m$, there exist unique chains $v^i\colon \Delta^k\to \cA^T(e_i)$ for which we can write $\overline{u}\circ d$ as the composition 
$$\Delta^k \xlongrightarrow{(v^1,\dots,v^m)} \cA^T(e_1)\times \dots \cA^T(e_m)\xlongrightarrow{(\overline{u}_{n-1})_*} \cA^T(r_T).$$ As $\cA^T(e_i)\simeq \cA^{T^\uparrow_{e_i}}(r_{T^\uparrow_{e_i}})$ and the external face map $T^\uparrow_{e_i}\to T$ is not the identity, there exists maps $\chi_{v^i}\colon \Delta^k\to F(\alpha(e_i))$ for every $i=1,\dots,m$, and we define $X_{\overline{u},d}$ as the composition $$ \Delta^k\xlongrightarrow{\chi_{v^1},\dots,\chi_{v^m}} F(\alpha(e_1))\times \dots \times F(\alpha(e_m))\xlongrightarrow{(\overline{u}_{n-1})_*} F(\alpha(r_T)).$$

\noindent If $d$ is not root preserving, we proceed in a similar way. First, we observe that by maximality of $\overline{u}$, the subtree $\overline{u}_{d(k)}$ has at least one vertex, and we denote by $\{e_1,\dots,e_m\}$ its leaves $\overline{u}_{d(k)}$. For all $i=1,\dots,m$ there exists a unique chain $v^i\colon \colon \Delta^k\to \cA^T({e_i})$ with the property that $\overline{u}\circ  d=(v^1,\dots, v^m)\circ (\overline{u}_{d(k)})_*$, and we define $X_{\overline{u},d}$ as the composition $X_{\overline{u},d}\coloneqq  (\overline{u}_{d(k)})_* \circ (\chi_{\overline{u}_1},\dots,\chi_{\overline{u}_m})$.

\noindent One readily checks coherence of nested simplicial face, so we have a map ${(\partial\chi)_{\overline{u}}\colon \partial\Delta^n\to F({\alpha(r_T)}),}$ as wanted.  
\end{construction}

\noindent We use this construction for the following

 \begin{prop}\label{reducelift}
Let $f\colon F \to G$ be a morphism of $P$-algebras and $(T,\alpha)$ a representable in $\dsets/\cN_d P$. To have a dotted lift in the solid diagram

\[
\begin{tikzcd}
(\partial T, \alpha\circ \iota) \arrow[r, "\chi"] \arrow[d] & \rho^*(F) \arrow[d] \\
(T,\alpha) \arrow[r, "\xi"'] \arrow[ru, dashed, "\Lambda"]& \rho^*(G) 
\end{tikzcd}
\]
it suffices to have a lift in the diagram of simplicial sets
\begin{center}
\begin{tikzcd}
\partial\Delta^n \arrow[r, "(\partial \chi)_u"] \arrow[d]               & F({\alpha(r_T)}) \arrow[d, "f_{\alpha(r_T)}"] \\
\Delta^n \arrow[r, "\xi_u"'] \arrow[ru, "\lambda_u", dashed] & G({\alpha(r_T)})                           
\end{tikzcd} 
\end{center}
\noindent for any non-degenerate maximal chain $u\colon \Delta^n \to \cA^T({r_T})$. 
\end{prop}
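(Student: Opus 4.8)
The plan is to pass through the adjunction $(\rho_!,\rho^*)$ and the explicit description of the dendrices of $\rho^*(F)$ from \Cref{reduction1}, turning the dendroidal lifting problem into a bookkeeping problem about collections of simplices, and then to show that the only genuinely undetermined data are the top simplices indexed by non-degenerate maximal chains of $T$. Concretely, a dotted arrow $\Lambda\colon (T,\alpha)\to\rho^*(F)$ is, by the Yoneda lemma, an element of $\rho^*(F)_{(T,\alpha)}$, hence by \Cref{reduction1} a collection $\{\gamma_w\}$ of maps $\gamma_w\colon\Delta^k\to F(\alpha(e))$ indexed by edges $e$ of $T$ and non-degenerate simplices $w\colon\Delta^k\to\cA^T(e)$, subject to the compatibility condition. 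The commutativity of the square imposes two constraints: that $f_{\alpha(e)}\circ\gamma_w$ agree with the corresponding datum of $\xi$, and that $\gamma_w$ agree with the datum supplied by $\chi$ whenever $w$ factors through a proper face of $T$. Constructing $\Lambda$ thus amounts to choosing the remaining, \emph{undetermined} values of $\{\gamma_w\}$ so that the compatibility condition of \Cref{reduction1} still holds.

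Second, I would localize the undetermined data. For an edge $e\neq r_T$ there is an isomorphism $\cA^T(e)\simeq\cA^{T^\uparrow_e}(r_{T^\uparrow_e})$ and the external face $T^\uparrow_e\hookrightarrow T$ is proper, so every $\gamma_w$ at such an edge factors through $\chi$ and is already fixed; it therefore suffices to define $\gamma_w$ for $w$ a simplex of $\cA^T(r_T)$. Here I would invoke \Cref{maxex}: every non-degenerate $w$ has a maximal extension $(\overline u,d)$ with $w=\overline u\circ d$, and the face map $d$ is unique. If $w$ is not itself maximal then $d$ is a proper face $\Delta^k\to\Delta^n$, so $w$ lies in the boundary $\partial\Delta^n$; by \Cref{lemmaaux} together with \Cref{construction1}, the value forced on such $w$ by $\chi$ is exactly $X_{\overline u,d}=(\partial\chi)_{\overline u}\circ d$, and one checks (coherence of nested faces in \Cref{construction1}) that this value depends only on $w$ and $\chi$, not on the chosen extension $\overline u$. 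Consequently the only free data are the top simplices $\gamma_{\overline u}$, one for each non-degenerate maximal chain $\overline u\colon\Delta^n\to\cA^T(r_T)$, and distinct maximal chains are distinct $n$-simplices whose overlaps lie entirely in the already-pinned boundary.

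This is precisely what the simplicial lifting problems provide: a chosen lift $\lambda_u$ restricts on $\partial\Delta^n$ to $(\partial\chi)_u$ and satisfies $f_{\alpha(r_T)}\circ\lambda_u=\xi_u$. I would set $\gamma_{\overline u}:=\lambda_u$ and $\gamma_w:=\lambda_u\circ d=(\partial\chi)_u\circ d$ for non-maximal $w$, and then assemble $\{\gamma_w\}$ into an element ${}^t\Lambda$ of $\rho^*(F)_{(T,\alpha)}$. The final step is to verify that this collection satisfies the compatibility condition of \Cref{reduction1}, that it restricts to $\chi$ on $\partial T$, and that $\rho^*(f)$ carries it to $\xi$; the first two follow from the construction of $(\partial\chi)_u$ and the pinning above, and the last from $f_{\alpha(r_T)}\circ\lambda_u=\xi_u$ together with the already-fixed values at the edges $e\neq r_T$.

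I expect the main obstacle to be the bookkeeping in this last step: showing that the locally chosen $\lambda_u$ glue into a \emph{globally} compatible collection in the sense of \Cref{reduction1}, i.e. that the operadic compatibility squares commute for \emph{all} sub-operations of $T$ simultaneously. The crux is the coherence statement that $X_{\overline u,d}$ is independent of the maximal extension, which rests on the essential uniqueness of the triple $(T(d,\overline u),\partial,u)$ from \Cref{lemmaaux} and on the case analysis of \Cref{construction1} (root-preserving versus non-root-preserving faces). One must also record, before invoking the simplicial lifts, that the commutativity of the original dendroidal square forces each simplicial square to commute, so that the hypothesized lifts $\lambda_u$ genuinely exist as data of the problem.
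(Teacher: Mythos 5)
Your proposal is correct and takes essentially the same approach as the paper's own proof: translate the problem through \Cref{reduction1}, fix the values at edges $e\neq r_T$ by restricting $\chi$ along $T^{\uparrow}_e\hookrightarrow T$, fix non-maximal chains at the root as $\Lambda_w\coloneqq\lambda_{\overline w}\circ d=(\partial\chi)_{\overline w}\circ d$ for a maximal extension $(\overline w,d)$, insert the given lifts $\lambda_u$ on maximal chains, and then verify compatibility, restriction to $\chi$, and the condition $\rho^*(f)\circ\Lambda=\xi$. The verification you defer as bookkeeping is precisely the bulk of the paper's proof (its conditions $(1)$--$(3)$), and the crux you identify---well-definedness across maximal extensions, resting on the essential uniqueness of the triple in \Cref{lemmaaux} and the root-preserving versus non-root-preserving case analysis of \Cref{construction1}---is exactly where that verification spends its effort.
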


\begin{proof}
Let $(\lambda_u)_{u}$ be a family of lifts as in the statement. Using the criterion of \Cref{reduction1} we construct, for any non-degenerate ${w\colon \Delta^k \to \cA^T(e)}$, a map of simplicial sets ${\Lambda_w\colon \Delta^k\to F({\alpha(e)})}$ such that:
\begin{enumerate}
\item the family $\{\Lambda_w\}_w$ satisfies the compatibility condition, defining a map $\Lambda\colon (T,\alpha) \to \rho^*F$ in $\dsets/\cN_d P$;
\item for any face $\partial\colon S\to T$ such that $w$ factors as $$\Delta^k\xrightarrow{w'} \cA^S({e'})\to \cA^T({\partial(e'))}=\cA^T(e)$$ for some $w'$, one has $\Lambda_w=\chi_{w'}$. This means that  $ \Lambda \iota = \chi$;
\item for any edge $e$ of $T$ and $w$ non-degenerate, $f_{\alpha(e)}\Lambda_w= \xi_w$, which means that $\rho^* (f) \Lambda=\xi$.
\end{enumerate} 

\noindent Fix then an edge $e$ of $T$ and a non-degenerate simplex $w\colon \Delta^k\to \cA^T(e)$. 

\noindent If $e\neq r_T$, the tree inclusion $\partial\colon T_e^{\uparrow}\hookrightarrow T$ yields the equality $\cA^T(e)=\cA^{T_e^{\uparrow}}({r_{T_e^{\uparrow}}})$. The restriction of $\chi$ to $\partial$ yields a map $\chi_{|_{\partial}}\colon (T_e^{\uparrow},\alpha\partial) \to \rho^*(F)$. By \Cref{panevino}, this gives a simplex ${\chi_w\colon \Delta^k \to F({\alpha(\partial(r_S))})=F({\alpha(e)}),}$ and we define $\Lambda_w\coloneqq \chi_w$. 

\noindent If $e=r_T$ and $w$ is maximal, we set $\Lambda_w\coloneqq \lambda_w$, which exists by assumption. If $w$ is not maximal, choose a maximal extension $(\overline{w},d)$ of $w$, with $d\colon \Delta^k\to \Delta^n$ the unique face such that $w=\overline{w}\circ d$, and define $\Lambda_w\coloneqq\lambda_{\overline{w}}\circ d= {(\partial\chi)_{\overline{w}}}_{|_{d}}$. Observe that, if $d$ satifies the hypotheses of point (ii) of \Cref{lemmaaux}, the map ${(\partial\chi)_{\overline{w}}}_{|_{d}}$ corresponds to $\chi_{w'}$, where $w'\colon \Delta^k\to \cA^{T(d,\overline{w})}({r_{T(d,\overline{w})}})$ is the simplex such that $\overline{w}\circ d= w=\partial\circ w'$, with $\partial \colon T(d,\overline{w})\to T$ the face map given by the lemma.

\noindent Let us now prove that the collection of the $\{\Lambda_w\}_w$ just defined satisfies the three conditions above.

\begin{enumerate}
\item Consider edges $e_1,\dots,e_n,e$ of $T$ such that $ T(e_1,\dots,e_n; e)\neq \emptyset$, a face map $v\colon \Delta^k\to \Delta^{k'}$ and a commutative diagram
\begin{equation}\label{prima}\begin{tikzcd}
\Delta^k \arrow[rr, "{(u_1,\dots,u_n)}"'] \arrow[d, "v"'] & & \cA^T({e_1})\times \dots \times \cA^T({e_n}) \arrow[d] \\
\Delta^{k'} \arrow[rr, "u"]                               & &\cA^T(e)                                             
\end{tikzcd}\end{equation} 
where $u_1,\dots,u_n,u$ are non-degenerate. We need to check that the following diagram commutes:
\begin{equation}\label{commut}\begin{tikzcd}
\Delta^k \arrow[rr, "{(\Lambda_{u_1},\dots,\Lambda_{u_n})}"'] \arrow[d, "v"'] && F({\alpha(e_1)})\times \dots \times F({\alpha(e_n)}) \arrow[d] \\
\Delta^{k'} \arrow[rr, "\Lambda_u"]                               & &F({\alpha(e)})                                             
\end{tikzcd}\end{equation} 

\noindent Suppose $e\neq r_T$, and write $S$, resp. $S_i$, for $T^\uparrow_e$, resp. $T^\uparrow_{e_i}$, $i=1,\dots,n$. In this case, $\Lambda_u=\chi_u$, where $u\colon \Delta^k{'}\to \cA^S({r_S})$, and similarly $\Lambda_{u_i}=\chi_{u^{S_i}_i}$, where $u^{S_i}_i\colon \Delta^k\to \cA^{S_i}_{r_{S_i}}$. Moreover, since $\chi$ is coherent on nested subtrees of $T$, we have that $\chi_{u^{S_i}_{e_i}}=\chi_{u^S_{i}},$ with $u^S_{i}\colon \Delta^k\to \cA^S({e_i})$. As a consequence, if we denote by $i\colon S \to T$ the subtree inclusion, the diagram \eqref{commut} commutes because it is a compatibility diagram for the map $\chi\colon (S, \alpha i) \to \rho^*(F)$.

\noindent If $e=r_T$ and $u$ is not maximal, let $\overline{u}\colon \Delta^n\to \cA^T({r_T})$ be the maximal extension of $u$ and $d$ the face map for which $\Lambda_u=\lambda_{\overline{u}}\circ d=(\partial\chi)_{{\overline{u}_|}_d}$. If $d$ is not root preserving, let $\{q_1,\dots,q_m\}$ be the leaves of $u_{k'}$, $u\colon \Delta^{k'}\to \cA^T({r_T})$. There are induced maps ${u^{(j)}\colon \Delta^{k'}\to \cA^T({q_j}) }$ for any $j\in \{1,\dots,m\}$, with the property that ${u=(u_{k'})_*\circ (u^{(1)},\dots,u^{(m)})}$ and $ { {(\partial \chi)_{\overline{u}}}_{|_{d}}=\Lambda_u}$, ${\Lambda_u= (u_{k'})_*\circ (\chi_{u^{(1)}},\dots,\chi_{u^{(m)}})}.$ There are subtrees $v_1,\dots,v_m$ of $T$ such that the composition $\cA^T({e_1})\times \dots \times \cA^T({e_n})\to \cA^T({r_T})$ factors as $$\cA^T({e_1})\times \dots \times \cA^T({e_n})\xrightarrow{((v_1)_*,\dots,(v_m)_*)} \cA^T({q_1})\times \dots \times \cA^T({q_m})\xrightarrow{(u_{k'})_*} \cA^T({r_T}),$$ and we can rewrite the diagram in \eqref{prima} as
\begin{center}\begin{tikzcd}
\Delta^k \arrow[d, "v"] \arrow[rr, "{(u_1,\dots,u_n)}"]                            &  & \cA^T({e_1})\times \dots \times \cA^T({e_n}) \arrow[d, "{((v_1)_*,\dots,(v_m)_*)}"] \\
\Delta^{k'} \arrow[rr, "{(u^{(1)},\dots,u^{(m)})}"] \arrow[d, "\id"] &  & \cA^T({q_1})\times \dots \times \cA^T({q_m} )\arrow[d, "(u_{k'})_*"]                   \\
\Delta^{k'} \arrow[rr, "u"]                                                        &  & \cA^T({r_T})                                                                      
\end{tikzcd}
\end{center}
\noindent In particular, commutativity of \Cref{commut} reduces to that of the following diagram

\begin{center}
\begin{tikzcd}
\Delta^k \arrow[d, "v"] \arrow[rr, "{(\chi_{u_1},\dots,\chi_{u_n})}"] &  & F({\alpha(e_1)})\times \dots \times F({\alpha(e_n)}) \arrow[d, "{((v_1)_*,\dots,(v_m)_*)}"] \\
\Delta^{k'} \arrow[rr, "{(\chi_{u^{(1)}},\dots,\chi_{u^{(m)}})}"]     &  & F({\alpha(q_1)})\times \dots \times F({\alpha(q_m)})                                       
\end{tikzcd}
\end{center}
\noindent This diagram is the product of $m$ diagrams, one for each face $T_{q_j}^{\uparrow}$, involving only the operation $v_j$, and each of them is commutative, therefore the product diagram is commutative as well. If $d$ is root preserving and $d(k'-1)=n-1$, the same proof applies mutata mutandis.

\noindent If $d$ satisfies the hypotheses of point (ii) of \Cref{lemmaaux}, we have that $\lambda_u={(\partial \chi)_{\overline{u}}}_{|_{d}}=\chi_{u'}, $ with $u'\colon \Delta^{k'}\to \cA^{T(d,\overline{u})}({r_{T(d,\overline{u})}})$. 

Each of the $u_i\colon \Delta^k\to \cA^T({e_i})$ factors as
$\Delta^k\xrightarrow{u_i'} \cA^{T(d,\overline{u})}({\overline{e}_i})\xrightarrow{\partial}\cA^T({e_i})$, and we have a commutative diagram 
\begin{center}
\begin{tikzcd}
\Delta^k \arrow[d, "v"] \arrow[rr, "{(u_1',\dots,u_n')}"] &  & {\cA^{T(d,\overline{u})}({\overline{e}_1})\times \dots \times \cA^{T(d,\overline{u})}({\overline{e}_n}}) \arrow[d, "(z')_*"] \arrow[rr, "\partial"] &  & \cA^T({e_1})\times \dots\times \cA^T({e_n}) \arrow[d, "\partial(z')_*"] \\
\Delta^{k'} \arrow[rr, "u'"]                              &  & {\cA^{T(d,\overline{u})}({r_{T(d,\overline{u})}})} \arrow[rr, "\partial"]                                                                          &  & \cA^T({r_T})                                                          
\end{tikzcd}
\end{center}

\noindent It follows that $\Lambda_{u_i}=\chi_{u'_i}$ and that we only need to check that ${(z')_*\circ (\chi_{u_1'},\dots,\chi_{u_n'})=\chi_{u'}\circ v}$, which holds as it is an instance of the map $\chi_{|_{\partial}}\colon (T(d,\overline{u}),\alpha \partial)\to \rho^*(F )$.

\item Consider $\partial, w$ and $w'$ as specified in $(2)$; we need to prove that $\Lambda_w=\chi_{w'}$, and for this it suffices to consider the case where $w'$ is non-degenerate as well. 

\noindent If $e\neq r_T$, we have by definition that $\Lambda_w=\chi_{\overline{w}},$ where $\overline{w}\colon \Delta^k\to \cA^{T_e^{\uparrow}}({r_{T_e^{\uparrow}}})=\cA^T(e)$. The restriction of $\partial$ to the subtree $S_{e'}^{\uparrow}$ yields the face map $S_{e'}^{\uparrow}\to T_{e}^{\uparrow}$ and induces a factorization of $\overline{w}$ of the form $\Delta^k\xrightarrow{w'} \cA^{S_{e'}^{\uparrow}}({r_{S_{e'}^{\uparrow}}})=\cA^S({e'})\to \cA^T({e})$, so $\chi_{\overline{w}}=\chi_{w'}$ and the thesis is proven.

\noindent Suppose now that $e=r_T$. Since $w$ factors through a face, it cannot be a maximal chain. Let $\overline{w}\colon \Delta^n \to \cA^T({r_T})$ and $d\colon \Delta^k\to \Delta^n$ respectively be the maximal extension of $w$ and the face map such that $w=\overline{w}\circ d$ and $\Lambda_w=\Lambda_{\overline{w}}\circ d= {(\partial\chi)_{\overline{w}}}_{|_{d}}$. 

\noindent If $d$ satisfies the hypotheses of point (ii) of \Cref{lemmaaux}, then $\Lambda_w=\chi_{w''}$ for a non-degenerate $\Delta^k\xrightarrow{w''} \cA^{T(d,\overline{w})}({r_{T(d,\overline{w})}})$. Since $w'$ is non-degenerate, initiality of $(T(d,\overline{w}), \partial,w)$ yields a face map $T(d,\overline{w})\to S$, and since $\chi$ is coherent on nested faces we have $ \chi_{w''}=\chi_{w'},$ and the thesis is proven. 

\noindent If $d$ is not root preserving, observe that $(\partial\chi)_{{{\overline{w}}_|}_{d}}= (w_{k'})_* \circ (\chi_{w_1},\dots,\chi_{w_n}) $, with $w_i\colon \Delta^k\to \cA^T({e_i})$ and $\{e_1,\dots,e_n\}$ the leaves of the tree $w_{k'}$. In particular, $w$ factors via the operadic composition of $w'_i\colon \Delta^k\to \cA^{S}({\overline{e}_i})$, namely $$w= \partial\circ (w'_{k'})_*(w'_1,\dots,w'_n)= (w_{k'})_*\circ (\partial w'_1,\dots,\partial w'_n)=(w_{k'})_*\circ (w_1,\dots,w_n),$$ so $\chi_{w_i}= \chi_{w'_i}$, hence the thesis. The same proof applies if $d$ is root preserving and $d(k-1)=n-1$.

\item Let $w\colon \Delta^k\to \cA^T(e)$ be non-degenerate. If $e\neq r_T$, then $\Lambda_w=\chi_w$, and it holds by hypothesis that $f_{\alpha(e)}\circ \chi_w= \xi_w$. If $e=r_T$ and $w$ is a maximal chain, by hypothesis on $\Lambda_w=\lambda_w$ we have that $f_{\alpha(r_T)}\circ \Lambda_w=\xi_w$. If $w$ is not maximal, let $\overline{w}\colon \Delta^n\to \cA^T({r_T})$ be a maximal extension and $d\colon \Delta^k\to \Delta^n$ the face such that $w=\overline{w}\circ d$. The compatibility conditions on the collection of $(\xi_u)_u$ ensure that  $\xi_w= \xi_{\overline{w}}\circ d$. By hypothesis we know that $\Lambda_w=\lambda_{\overline{w}}\circ d$ and that $f_{\alpha(r_T)} \circ \lambda_{\overline{w}}= \chi_{\overline{w}}$, so $f_{\alpha(r_T)} \circ \Lambda_{w}= \chi_{w}$, as wanted.
 \end{enumerate}
 \noindent We have checked all the required conditions, so this concludes the proof.
\end{proof}

\subsection{Lifts against leaf and dendroidal inner horns}\label{lift2}

We now provide a similar construction of lifts of morphisms of the form $\rho^* (f)\colon \rho^*(F)\to \rho^* (G)$ against dendroidal leaf and inner horn inclusions. Most of constructions and proofs will be similar to the ones of the previous section, so we will only stress the points where the constructions diverge. In particular, given a tree $T$ and a leaf vertex or an inner edge $x$ of $T$, the horn $\Lambda^x T$ is defined as the colimit on all the faces of $T$ except for $\partial_x$.

\begin{construction}\label{construction2} Let $F$ be a simplicial $P$-algebra and $(T,\alpha)$ a representable of $\dsets/\cN_dP$, and consider a non-degenerate maximal chain $\overline{u}\colon \Delta^n\to \cA^T({r_T})$. Let $x$ be an inner edge or a leaf vertex of $T$, and let $j\colon \Lambda^xT\to T$ be the corresponding dendroidal horn inclusion. Let $k\in [n]$ be the index such that $x$ appears in $\overline{u}_k$ but not in $\overline{u}_{k+1}$; observe that $0\leq k< n$, and necessarily $0<k<n$ when $x$ is an inner edge.

\noindent Given a morphism $\chi\colon (\Lambda^x T, \alpha j)\to \rho^* (F)$, we define a map of simplicial sets ${(\Lambda^n_k \chi)_{\overline{u}} \colon \Lambda^n_k\to F({\alpha(r_T)})}$ by constructing, for any face map $d\colon \Delta^i \to \Delta^n$, $d\neq d^k$, a morphism $X_{\overline{u},d}\colon \Delta^{i}\to F({\alpha(r_T)})$, coherently on nested faces.

\noindent Consider first $d$ satisfying the hypotheses of point (ii) of \Cref{lemmaaux}, and let $(T(d,\overline{u}), \partial, u)$ be the triple given by the lemma; in particular, $u\circ d= \partial\circ \overline{u}$. 
The dendroidal face $\partial\colon T(d,\overline{u})\to T$ is necessarily different from $\partial_x$, hence it is a face in the colimit diagram for $\Lambda^xT$. We then proceed as in \Cref{construction1}, obtaining a morphism $\Lambda^n_k \chi\colon \Lambda^n_k\to F({\alpha(r_T)})$; we can do so because, for any edge $e$ of $T$, the external face map $ T_e^{\uparrow}\to T$ is necessarily different from $\partial_x$.

\noindent If $d$ falls out the hypothese of the lemma, we can define $X_{\overline{u},d}$ by proceding exactly as in \Cref{construction1}.

\noindent The construction is coherent on nested faces, and defines a morphism $(\Lambda^n_k \chi)_{\overline{u}} \colon \Lambda^n_k\to F_{\alpha(r_T)}$, as wanted.
\end{construction}

 \begin{prop}\label{projfibleftfib}
Let $f\colon F \to G$ be a morphism of $P$-algebras and $(T,\alpha)$ a representable in $\dsets/\cN_dP$. Let $x$ be an inner edge or a leaf vertex of $T$. To have a lift $\Gamma$ in the solid diagram
\[
\begin{tikzcd}
(\Lambda^x T, \alpha\circ j) \arrow[d] \arrow[r, "\chi"] & \rho^*F \arrow[d, "\rho^*f"] \\
(T,\alpha) \arrow[r, "\xi"] \arrow[ru, dotted, "\Gamma"] & \rho^*G
\end{tikzcd}
\]

it is sufficient to have a lift  in the diagrams of simplicial sets

\begin{center}
\begin{tikzcd}
\Lambda^n_k \arrow[r, "(\Lambda^n_k \chi)_{u}"] \arrow[d]               & F_{\alpha(r_T)} \arrow[d, "f_{\alpha(r_T)}"] \\
\Delta^n \arrow[r, "\xi_u"'] \arrow[ru, "\gamma_u", dashed] & G_{\alpha(r_T)}                           
\end{tikzcd}
\end{center}  for any non-degenerate maximal chain $u\colon \Delta^n\to \cA^T_{r_T}$, where $0\leq k<n$ is the index for which $x$ appears in $u_k$ but not in $u_{k+1}$.
\end{prop}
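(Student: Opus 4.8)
The plan is to follow the proof of \Cref{reducelift} almost verbatim, replacing the dendroidal boundary $\partial T$ by the horn $\Lambda^x T$, the simplicial boundary $\partial\Delta^n$ by the simplicial horn $\Lambda^n_k$, and the family of boundary lifts by the given family of horn lifts $(\gamma_u)_u$. Concretely, given such a family, I would use the criterion of \Cref{reduction1} to build, for every edge $e$ of $T$ and every non-degenerate $w\colon \Delta^k\to\cA^T(e)$, a simplex $\Gamma_w\colon \Delta^k\to F(\alpha(e))$, and then check that $\{\Gamma_w\}_w$ satisfies the compatibility condition (so that it defines a map $\Gamma\colon (T,\alpha)\to\rho^*F$), that $\Gamma j=\chi$, and that $\rho^*(f)\Gamma=\xi$.

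The definition of $\Gamma_w$ would be identical to that of $\Lambda_w$ in \Cref{reducelift}: for $e\neq r_T$ I set $\Gamma_w\coloneqq\chi_w$ via the identification $\cA^T(e)\simeq\cA^{T^\uparrow_e}(r_{T^\uparrow_e})$ and the restriction of $\chi$ along the external face $T^\uparrow_e\hookrightarrow T$; for $e=r_T$ and $w$ maximal I set $\Gamma_w\coloneqq\gamma_w$; and for $e=r_T$ and $w$ non-maximal I choose a maximal extension $(\overline{w},d)$ and set $\Gamma_w\coloneqq\gamma_{\overline{w}}\circ d$, exactly as in \Cref{construction2}. The only point needed for this to be well defined is that every dendroidal face invoked along the way is different from the excluded face $\partial_x$, so that $\chi$ — which is only defined on $\Lambda^x T$ — can be restricted to it. For $e\neq r_T$ this is guaranteed because the external face $T^\uparrow_e\hookrightarrow T$ is never $\partial_x$ (as already noted in \Cref{construction2}), and for $e=r_T$ it follows from the key combinatorial point below.

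The one genuinely new ingredient, and the main obstacle, is to match the excluded simplicial face with the excluded dendroidal face. For a fixed non-degenerate maximal chain $\overline{u}\colon\Delta^n\to\cA^T(r_T)$ with $x$ appearing in $\overline{u}_k$ but not in $\overline{u}_{k+1}$, I would show that the codimension-one face $d^k\colon\Delta^{n-1}\to\Delta^n$ omitted by $\Lambda^n_k$ is precisely the one that \Cref{lemmaaux} sends to the dendroidal face $\partial_x\colon\partial_x T\to T$, while every other codimension-one face $d^j$, $j\neq k$, is sent to a dendroidal face different from $\partial_x$. When $x$ is an inner edge this forces $0<k<n$, since its upper vertex must be removed strictly before its lower vertex along the chain; when $x$ is a leaf vertex it allows $k=0$, recovering the expected matching between leaf horns and left horns. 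Granting this identification, \Cref{construction2} only ever restricts $\chi$ along faces lying in $\Lambda^x T$, so the $\Gamma_w$ are well defined; and since $\Lambda^x T$ is exactly the union of the dendroidal faces different from $\partial_x$, the verification that $\Gamma j=\chi$ reconstructs $\chi$ on all of $\Lambda^x T$, the value of $\Gamma$ on the missing face being supplied by the genuinely new lift datum $\gamma_{\overline{u}}\circ d^k$.

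With this in place, the remaining verifications of the compatibility condition and of $\rho^*(f)\Gamma=\xi$ run exactly as in \Cref{reducelift}. In particular, a dimension count shows that a maximal chain at $r_T$ can never occur as the target of a compatibility square with non-degenerate entries — since $\#V(T)=1+\sum_i\#V(T^\uparrow_{e_i})$ strictly exceeds each $\#V(T^\uparrow_{e_i})$ — so the new datum $\gamma_{\overline{u}}$ never enters the compatibility check, and condition (1) reduces word for word to the non-maximal case already treated there. Conditions (2) and (3) are then checked face by face as before, now restricting attention to the faces of $\Lambda^x T$.
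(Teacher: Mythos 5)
Your plan reproduces the paper's architecture (define $\Gamma_w$ exactly as in \Cref{reducelift}, with the horn lifts $\gamma_u$ supplying the values on maximal chains, then verify the three conditions), but the claim you isolate as ``the one genuinely new ingredient'' is false, and it is not what the paper's proof is built on. You assert that \Cref{lemmaaux} sends the omitted simplicial face $d^k$ to the omitted dendroidal face $\partial_x$. With the paper's definition of $k$ (the edge $x$ belongs to $\overline{u}_k$ but not to $\overline{u}_{k+1}$, so $k+1$ is the step at which the \emph{lower} endpoint of $x$ leaves the chain), this matching holds only when the two endpoints of $x$ are removed at consecutive steps $k$ and $k+1$; in general the upper endpoint leaves much earlier, as you yourself observe. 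Concretely, let $T$ have root vertex $v_r$ with two inner input edges $a,b$ carrying unary vertices $v_a,v_b$ with leaf inputs, and take $x=b$ and the chain $\overline{u}\colon T\supset\{v_r,v_a\}\supset\{v_r\}\supset\eta$. Then $n=3$ and $k=2$, but $\overline{u}\circ d^2$ is the chain $T\supset\{v_r,v_a\}\supset\eta$, which lies in the image of $\partial_a$, and \Cref{lemmaaux} returns $T(d^2,\overline{u})=\partial_a T$ with $\partial=\partial_a\neq\partial_b$; in fact no root preserving codimension-one face of $\overline{u}$ is sent to $\partial_b$ at all, while $d^1$ is sent to an isomorphism (its image $T\supset\{v_r\}\supset\eta$ factors through no proper face of $T$). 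So the identification on which you hang both the well-definedness of your $\Gamma_w$'s and the verification of $\Gamma j=\chi$ cannot be proved. For comparison: the paper invokes only the implication $d=d^k\Rightarrow\partial=\partial_x$, and only once (in checking $\Gamma j=\chi$); its treatment of the genuinely new case $d=d^k$ in the compatibility check is by a different argument, namely that every proper restriction $d^k\circ v$ with $v\neq\id$ has codimension at least two and therefore factors through some face $d^j$, $j\neq k$, hence through the horn $\Lambda^n_k$, while $v=\id$ can only occur for the trivial unary composition.

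Your shortcut for condition (1) is also incorrect. The dimension count $\#V(T)>\#V(T^{\uparrow}_{e_i})$ only excludes compatibility squares whose left face map is the identity. A maximal chain can perfectly well occur as the bottom of a compatibility square whose left map is a proper face: already for $T=C_2$ take $v\colon\Delta^0\to\Delta^1$ hitting $0$ and the $u_i$ the trivial $0$-simplices $\eta_{e_i}$. More to the point, the omitted face $\overline{u}\circ d^k$ itself --- whose value $\gamma_{\overline{u}}\circ d^k$ is exactly the datum not determined by $\chi$ --- occurs as such a bottom. Hence the new data does enter the compatibility check, and condition (1) does not reduce ``word for word'' to \Cref{reducelift}; this is precisely the extra case that the paper's proof isolates as $d=d^k$ and treats by the codimension-two argument above, an argument your proposal lacks.
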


\begin{proof}
\noindent The proof of \Cref{reducelift} can be adapted to this context with minor changes; let us explain how.

\noindent Assume we are given lifts $(\gamma_u)_{u}$ as in the statement. We construct, for any edge $e$ of $T$ and any non-degenerate chain $w\colon \Delta^k \to \cA^T(e)$, a map of simplicial sets $\Gamma_w\colon \Delta^k\to F_{\alpha(e)}$ such that:
\begin{enumerate}
\item the family $\{\Gamma_w\}_w$ defines a morphism $\Gamma\colon (\Lambda^xT, \alpha j) \to \rho^*(F)$;
\item $ \Gamma j = \chi$, which means that if $\partial\colon S\to T$ is a dendroidal  face map with $\partial\neq \partial_x$, and $w\colon \Delta^k\to \cA^T(e)$ factors as $\Delta^k\xrightarrow{w'} \cA^S({e'})\to \cA^T({\partial(e')})$ for some $w'$ then $\Gamma_w=\chi_{w'}$;
\item $\rho^* (f)\circ \Gamma=\xi$.
\end{enumerate} 

\noindent Fix such $T,e$ and $w$.
If $e\neq r_T$, then there is a natural identification $\cA^T(e)=\cA^{T_e^{\uparrow}}({r_{T_e^{\uparrow}}}).$ The subtree inclusion $\partial\colon {T_e^{\uparrow}} \to T$ is necessarily different from $\partial_x$, so we define $\Gamma_w\coloneqq \chi_w$.

\noindent If $e=r_T$ and $w$ is not maximal, choose non-degenerate maximal chain $\overline{w}\colon \Delta^n \to \cA^T({r_T})$ and a face map $d\colon \Delta^i\to \Delta^n$ such that $w=\overline{w}\circ d$, and define $\Gamma_w= \gamma_{\overline{w}}\circ d$. Observe that, if $d\neq d^k $, there is an equality $\Gamma_w=\gamma_{\overline{w}}\circ d={(\Lambda^x\chi)_{\overline{w}}}_{|_{d}}$. If moreover $d$ satisfies the hypotheses of point (ii) of \Cref{lemmaaux}, we can write ${(\Lambda^x\chi)_{\overline{w}}}_{|_{d}}=\chi_{w'} $, where  $w'\colon \Delta^i\to \cA^{T(d,\overline{w})}({r_{T(d,\overline{w})} }) $ is the simplex such that  $\overline{w}\circ d= w=\partial\circ w'$ given by the lemma.

\noindent We now prove that the collection $\{\Gamma_w\}_w$ satisfies the three required conditions.
\begin{enumerate}
\item Consider the diagrams as in point $(1)$ of the proof of \Cref{reducelift}. If $e\neq r_T$, the thesis can be proven using the same arguments. If $e=r_T$ and $u$ is not maximal, let $\overline{u}\colon \Delta^n\to \cA^T_{r_T}$ be the maximal extension of $u$ such that $u= \overline{u}\circ d$ and $\Gamma_u=\gamma_{\overline{u}}\circ d= {(\Lambda^n_k \chi)_{\overline{u}}}_{|_{d}}.$ If $d\neq d^k$, we  proceed as in the analogous case of \Cref{reducelift} (the dendroidal face map $\partial$ is different from $\partial_x$ when $d$ is root preserving). Consider now the case $d=d^k\colon \Delta^{n-1}\to \Delta^n$. If the face map $v\colon \Delta^i\to \Delta^{n-1}$ is not the identity, we have that $\Gamma_u\circ v= \gamma_{\overline{u}}\circ d \circ v= {(\Lambda^n_k\chi)_{\overline{u}}}_{|_{d\circ v}}=\chi_{u'}$ for $u'\colon \Delta^i\to \cA^{T(d\circ v, \overline{u})}({r_{T(d\circ v, \overline{u})}})$, so we are back to checking a condition on $\chi$ involving a face map $\partial\colon T(d v, \overline{u})\to T$, which lies in the colimit diagram for $\Lambda^xT$. If $v=\id$, we conclude that necessarily $n=1$, $\cA^T({e_1})= \cA^T({r_T})$ and the map $\cA^T({e_1})\to \cA^T({r_T})$ is the identity, so the thesis holds trivially.

\item Consider a dendroidal face $\partial\colon S\to T$, $\partial\neq \partial_x$, and a simplex $w\colon \Delta^i\to \cA^T(e)$ which factors as $\Delta^i\xrightarrow{w'}\cA^S({e'})\xrightarrow{\partial}\cA^T({\partial(e')})$ for some $e'\in E(S)$ with $\partial(e')=e$. We can reason as in the proof of \Cref{reducelift}: if $e\neq r_T$ we prove that $\Gamma_w=\gamma_{w'}$; if $e=r_T$, we can still apply the arguments because, if $\Gamma_w= {(\partial\chi)_{\overline{w}}}_{|_{d}}=\gamma_{\overline{w}}\circ d$ for a maximal extension $(\overline{w},d)$ of $w$, necessarily $d\neq d^k$, since otherwise $\partial=\partial_x$.

\item This point can be proven in a completely analogous way as point $(3)$ in the proof of \Cref{reducelift}.
\end{enumerate}
\noindent This concludes the proof.
\end{proof}

\subsection{The Quillen adjunction}

\noindent We are ready to prove the main result of this section. 

\begin{theorem}\label{adjj}
	The straightening-unstraightening adjunction $$\adjunction{\rho_!}{\dsets/\cN_d P}{\alg_P(\ssets)}{\rho^*} $$ is a Quillen adjunction between the covariant model structure on $\dsets/\cN_d P$ and the projective model structure on $\alg_P(\ssets).$
\end{theorem}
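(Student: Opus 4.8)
The plan is to verify that $\rho^*$ is a right Quillen functor, equivalently that the left adjoint $\rho_!$ preserves cofibrations and trivial cofibrations. Since the covariant model structure on $\dsets/\cN_d P$ is cofibrantly generated (\Cref{covmambo}), with generating cofibrations the dendroidal boundary inclusions $(\partial T,\alpha\iota)\to(T,\alpha)$ and generating trivial cofibrations the inner and leaf horn inclusions $(\Lambda^x T,\alpha j)\to (T,\alpha)$ — both ranging over the representables of $\dsets/\cN_d P$ — it suffices to check that $\rho_!$ sends these two classes of generators respectively to cofibrations and to trivial cofibrations of $\alg_P(\ssets)$. Indeed $\rho_!$, being a left adjoint, preserves all colimits and hence carries the saturation of a set of maps into the saturation of its image; as the cofibrations and the trivial cofibrations of the projective model structure are themselves saturated (characterized by the left lifting property against all trivial fibrations, resp.\ all fibrations), preservation on generators propagates to all (trivial) cofibrations. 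Recall finally that a morphism of simplicial $P$-algebras is a trivial fibration, resp.\ a fibration, exactly when each of its components is a trivial Kan fibration, resp.\ a Kan fibration.

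For the cofibrations, fix a generating cofibration $(\partial T,\alpha\iota)\to (T,\alpha)$ and a trivial fibration $f\colon F\to G$. By the adjunction $(\rho_!,\rho^*)$, showing that $\rho_!$ applied to the boundary inclusion lifts against $f$ is equivalent to producing a diagonal lift of $(\partial T,\alpha\iota)\to(T,\alpha)$ against $\rho^*(f)\colon\rho^*(F)\to\rho^*(G)$. This is precisely the lifting problem treated in \Cref{reducelift}, which reduces it to finding, for every non-degenerate maximal chain $u\colon\Delta^n\to\cA^T(r_T)$, a diagonal in the square of simplicial sets with left edge $\partial\Delta^n\hookrightarrow\Delta^n$ and right edge $f_{\alpha(r_T)}$. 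Since $f$ is a trivial fibration, $f_{\alpha(r_T)}$ is a trivial Kan fibration and so has the right lifting property against $\partial\Delta^n\to\Delta^n$; the simplicial lifts exist, hence so does the dendroidal one. Thus $\rho_!$ sends generating cofibrations to cofibrations.

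For the trivial cofibrations, fix a generating trivial cofibration $(\Lambda^x T,\alpha j)\to(T,\alpha)$, with $x$ an inner edge or a leaf vertex of $T$, and a fibration $f\colon F\to G$. Transposing again along the adjunction, it suffices to lift $(\Lambda^x T,\alpha j)\to(T,\alpha)$ against $\rho^*(f)$, which is exactly the situation of \Cref{projfibleftfib}. That proposition reduces the problem to finding, for each non-degenerate maximal chain $u\colon\Delta^n\to\cA^T(r_T)$, a diagonal in the square with left edge the horn inclusion $\Lambda^n_k\hookrightarrow\Delta^n$ — where $0\le k<n$ is the index at which $x$ disappears along $u$ — and right edge $f_{\alpha(r_T)}$. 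As $f$ is a projective fibration, $f_{\alpha(r_T)}$ is a Kan fibration and lifts against every horn inclusion, in particular against $\Lambda^n_k\to\Delta^n$; the simplicial lifts exist, so the dendroidal lift does too. Hence $\rho_!$ sends generating trivial cofibrations to maps with the left lifting property against all fibrations, i.e.\ to trivial cofibrations, completing the verification.

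The genuinely hard part of this argument is not the theorem itself but the two reduction lemmas \Cref{reducelift} and \Cref{projfibleftfib}, together with the combinatorial \Cref{lemmaaux} on which they rest: once the dendroidal lifting problems are translated into purely simplicial ones indexed by maximal chains, the present proof is a formal application of the lifting characterizations of (trivial) fibrations. I would flag two points requiring care. First, one must justify that the generating trivial cofibrations of the Bousfield-localized covariant structure may be taken to be the inner and leaf horn inclusions; this is where the localization enters, and it is also where the constraint $0\le k<n$ originates, reflecting the reverse-inclusion convention of \Cref{sirene} that produces \emph{left} rather than right horns. Second, since a projective fibration has all components Kan fibrations, right lifting against every $\Lambda^n_k\to\Delta^n$ holds automatically, so the restriction $0\le k<n$ imposes no obstruction here; it is only in the later identification of the fibrant objects and the comparison with left fibrations that this index range becomes essential.
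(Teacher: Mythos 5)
Your two lifting computations are exactly the ones the paper uses (via \Cref{reducelift} and \Cref{projfibleftfib}), and the cofibration half of your argument is fine, since normal monomorphisms over $\cN_d P$ are indeed generated by the boundary inclusions. The genuine gap is in the trivial cofibration half: the claim that the generating trivial cofibrations of the covariant model structure may be taken to be the inner and leaf horn inclusions is not a ``point requiring care'' that one could justify later --- it is false. The covariant model structure is a left Bousfield localization (last sentence of \Cref{covmambo}), and its trivial cofibrations form a class strictly larger than the saturation of the inner and leaf horn inclusions; dually, maps with the right lifting property against those horns (dendroidal left fibrations) need not be covariant fibrations. This is visible already in the simplest case where $P$ is the trivial one-colour operad: then $\dsets/\cN_d P\simeq \ssets$ carries the Kan--Quillen model structure, while the horn inclusions over the base generate only the left anodyne maps; the inclusion $\{1\}\hookrightarrow \Delta^1$ has the right lifting property against all left horns but is not a Kan fibration, so lifting against your proposed generators does not detect fibrations. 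This is precisely why \Cref{covmambo}(3) is stated only for fibrations \emph{between fibrant objects}. Consequently, showing that $\rho_!$ carries the horn inclusions to projective trivial cofibrations does not show that it preserves all trivial cofibrations, and your proof as written does not establish the theorem.

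The gap is repairable, and the paper's proof shows how to assemble exactly the facts you proved. Your boundary argument says that $\rho^*$ sends projective trivial fibrations to covariant trivial fibrations (these \emph{are} detected by lifting against the generating cofibrations), and your horn argument says that $\rho^*$ sends projective fibrations to dendroidal left fibrations. Since $\rho^*$ applied to a fibrant algebra is then a left fibration, i.e.\ covariantly fibrant, and since covariant fibrations between fibrant objects are exactly the dendroidal left fibrations by \Cref{covmambo}(3), it follows that $\rho^*$ preserves fibrations between fibrant objects. One then invokes the standard model-categorical lemma that a right adjoint preserving trivial fibrations and preserving fibrations between fibrant objects is right Quillen. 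That final step is not cosmetic: it is exactly what absorbs the localization issue you flagged, and without it (or some substitute for it) the argument is incomplete.
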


\begin{proof}
Recall that a morphism $(X,f)\to (Y,g)$ in $\dsets/\cN_d P$ is a left fibration, resp. trivial fibration, if it has the right lifting property against inner and leaf horn inclusions $(\Lambda^xT, \alpha\circ j)\to (T,\alpha)$, resp. boundary inclusions $(\partial T, \alpha\circ \iota)\to (T,\alpha)$. A morphism of $P$-algebras $F\to G$ is a projective fibration, resp. projective trivial fibration, if for any object $c$ of $P$, the map of simplicial sets $F(c)\to G(c)$ has the right lifting property with respect to all horn inclusions $\Lambda^n_k\to \Delta^n$, resp. to the boundary inclusions $\partial\Delta^n\to \Delta^n$.

\noindent After \Cref{reducelift}, the functor $\rho^*$ sends covariant trivial fibrations to projective trivial fibrations, and \Cref{projfibleftfib} shows that it sends projectively fibrant objects to left fibrations. As covariant fibrations between fibrant objects are left fibrations, this means that $\rho^*$ preserves fibrations between fibrant objects. It is a standard model-categorical fact that this is enough to ensure it preserves all fibrations.

\noindent We have shown that $\rho^*_P$ is a right Quillen functor, and this concludes the proof.
\end{proof}

\section{Monoidal rectification of left fibrations}\label{section6}

\noindent Consider $P=A$ a discrete category. The rectification functor and the relative dendroidal nerve for $A$ yield an adjunction $$ \adjunction{\rho_!^A}{\ssets/\cN A}{\mathsf{Fun}(A,\ssets)}{\rho^*_A}$$ which is a Quillen adjunction with respect to the covariant  model structure on the left hand side and the projective model structure on the right hand side. As observed in \Cref{recover}, this adjunction recovers the one defined in \cite{HM:LFHC}, where it is proven that it is a Quillen equivalence.

\noindent In this section, we 
\begin{enumerate}
\item give a new independent proof that $(\rho_!^A,\rho^*_A)$ is a Quillen equivalence for a discrete category $A$. Contrarily to the one in \cite{HM:LFHC}, it does not rely on comparing $\rho_!^A$ with the homotopy colimit functor;
\item improve the above result by showing that, when $A$ is a symmetric monoidal category, the Quillen equivalence $(\rho_!^A,\rho^*_A)$ is in fact a monoidal Quillen equivalence of monoidal model categories. 
\end{enumerate}

\noindent In \Cref{section8} we will actually prove that $(\rho_!^P,\rho^*_P)$ is a Quillen equivalence for \emph{any} $\Sigma$-free discrete operad $P$, but that will require to use $\infty$-categories, while in this section we are still able to prove our results with model categorical and point-set level methods.

\subsection{The Quillen equivalence: discrete categories}\label{section6.1}

\noindent Fix a discrete category $A$ and write $\rho_!$, resp. $\rho^*$, for $\rho_!^A$, resp. $\rho^*_A$. Consider a simplicial set $X$ and an element $(X,f)$ in $\ssets/\cN A$. The element $(X,\alpha)$ is the colimit of representables, $$(X,f)\simeq \underset{{([n],\alpha)\to (X,f)}}{\colim ([n],\alpha)}.$$ \noindent As colimits of functors can be computed objectwise, for every object $a$ of $A$ one has $$ \rho_!(X,f)\simeq \underset{([n],\alpha)\to(X,f)}{\colim}\cN(\alpha/a).$$ \noindent Now, the nerve of the poset $\alpha/a$ can be written as $$ \cN(\alpha/a)\simeq \Delta^n \times_{\cN A}\cN (A/a),$$ hence \begin{equation}
	\rho_!(A,\alpha)\simeq X \times_{\cN A} \cN(A/a).
\end{equation}

\noindent With this description in hand, we can prove that the Quillen adjunction $(\rho_!,\rho^*)$ induces an equivalence of homotopy categories. The credits of the following proof go to Gijs Heuts.

\begin{theorem}\label{bonobo}
	For any discrete category $A$, the Quillen adjunction $$\adjunction{\rho_!}{ \ssets/\cN A}{ \mathsf{Fun}(A,\ssets)}{\rho^*}$$ is a Quillen equivalence between the covariant and the projective model structure.
\end{theorem}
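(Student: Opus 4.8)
The plan is to use the explicit formula $\rho_!(X,f)(a)\simeq X\times_{\cN A}\cN(A/a)$ together with \Cref{freegaza} to prove that $\rho_!$ computes \emph{covariant homotopy fibres}, and then to invoke the standard criterion for a Quillen equivalence. Since the cofibrations of the covariant model structure are the monomorphisms, every object of $\ssets/\cN A$ is cofibrant, so $\mathbb{L}\rho_!=\rho_!$ already computes the total left derived functor. Recall also from \Cref{covmambo}(4) that a map between covariantly fibrant objects is a covariant weak equivalence if and only if it is a fibrewise weak homotopy equivalence; passing to covariant fibrant replacements, it follows that an arbitrary map $\phi\colon (X,f)\to(Y,g)$ is a covariant weak equivalence if and only if it induces weak homotopy equivalences on the \emph{derived} fibres $\widehat{X}_a\to\widehat{Y}_a$ for every object $a$ of $A$, where $\widehat{(-)}$ denotes a covariant fibrant replacement.

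The key step is to identify, for every $(X,f)$ and every object $a$, the value $\rho_!(X,f)(a)=X\times_{\cN A}\cN(A/a)$ with the derived fibre $\widehat{X}_a$. I would prove this through the zig-zag
\[
\widehat{X}_a=\widehat{X}\times_{\cN A}\{a\}\;\longrightarrow\;\widehat{X}\times_{\cN A}\cN(A/a)=\rho_!(\widehat{X})(a)\;\longleftarrow\;X\times_{\cN A}\cN(A/a)=\rho_!(X,f)(a).
\]
The right-hand map is $\rho_!$ applied to the covariant trivial cofibration $X\hookrightarrow\widehat{X}$, hence a projective weak equivalence by \Cref{adjj}; in particular it is a weak homotopy equivalence at $a$. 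The left-hand map is the inclusion of the fibre over the terminal object $(a,\mathrm{id}_a)$ of the slice $A/a$: since $\widehat{X}\to\cN A$ is a left fibration, the pullback $\widehat{X}\times_{\cN A}\cN(A/a)\to\cN(A/a)$ is again a left fibration, and the inclusion of the terminal vertex $\{(a,\mathrm{id}_a)\}\hookrightarrow\cN(A/a)$ is right anodyne, so the total space of this left fibration deformation-retracts onto the fibre over the terminal object and the fibre inclusion is a weak homotopy equivalence. Chaining the two equivalences yields $\rho_!(X,f)(a)\simeq\widehat{X}_a$.

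Granting this identification, the two ingredients of the equivalence follow. On the one hand, combining it with the fibrewise detection of covariant equivalences recorded above shows that $\rho_!$ \emph{reflects} weak equivalences between (cofibrant, i.e.\ all) objects: $\rho_!\phi$ is a projective weak equivalence if and only if it is a weak homotopy equivalence at each $a$, if and only if $\widehat{X}_a\to\widehat{Y}_a$ is a weak homotopy equivalence for all $a$, if and only if $\phi$ is a covariant weak equivalence. On the other hand, for a projectively fibrant $F$ the object $\rho^*F$ is already covariantly fibrant with fibre $(\rho^*F)_a\cong F(a)$ by \Cref{freegaza}, so the identification specialises to a weak equivalence $\rho_!\rho^*F(a)=\rho^*F\times_{\cN A}\cN(A/a)\simeq F(a)$, which one checks is the one induced by the counit; thus the derived counit $\rho_!\rho^*F\to F$ is a projective weak equivalence. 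By the standard criterion (Hovey's characterisation of Quillen equivalences), a Quillen adjunction whose left adjoint reflects weak equivalences between cofibrant objects and whose derived counit is a weak equivalence is a Quillen equivalence, which is the assertion.

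I expect the main obstacle to be twofold, both internal to the key step. First is the cofinality input: that the fibre of a left fibration over a terminal vertex of the base includes as a weak homotopy equivalence, which rests on the inclusion of a terminal object being right anodyne and on left fibrations behaving well under such pullbacks. Second is the bookkeeping needed to check that the abstractly constructed zig-zag is genuinely natural in $(X,f)$ and that its specialisation to $\rho^*F$ is induced by the counit, rather than merely exhibiting an abstract equivalence of the two sides; this naturality is what upgrades the objectwise equivalences into the statement that the derived counit is a weak equivalence. The remaining verifications are formal consequences of the Quillen adjunction of \Cref{adjj} and the point-set formula for $\rho_!$.
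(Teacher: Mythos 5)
Your proposal is correct, but it is the mirror image of the paper's proof rather than the same argument: you verify Hovey's criterion on the left-adjoint side ($\rho_!$ reflects weak equivalences between cofibrant objects, and the counit at each projectively fibrant $F$ is a weak equivalence), whereas the paper verifies the dual criterion on the right-adjoint side ($\rho^*$ reflects weak equivalences between fibrant objects, and the derived unit at each left fibration $(X,f)$ is a weak equivalence). The essential inputs coincide: the formula $\rho_!(X,f)(a)\simeq X\times_{\cN A}\cN(A/a)$, \Cref{freegaza}, the fibrewise detection of weak equivalences between fibrant objects from \Cref{covmambo}, and the fact from \cite[Proposition 2.10]{HHR:SPST} that the pullback of the right anodyne map $\{\id_a\}\hookrightarrow\cN(A/a)$ along a left fibration is right anodyne. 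What the paper's arrangement buys is precisely the elimination of the two obstacles you flag. First, reflection for $\rho^*$ is immediate from \Cref{freegaza}, so no zig-zag identification $\rho_!(X,f)(a)\simeq\widehat{X}_a$ (and no naturality bookkeeping for fibrant replacements) is needed. Second, on the unit side there is nothing to "check is induced by": under the identification of \Cref{freegaza}, the unit $(X,f)\to\rho^*\rho_!(X,f)$ evaluated on fibres over $a$ \emph{is} the fibre inclusion $(X,f)_a\to X\times_{\cN A}\cN(A/a)$, i.e.\ the top arrow of the very pullback square to which the right-anodyne lemma applies, so the argument closes in one step. Your deferred verifications do go through, so there is no gap: the zig-zag is natural once one uses the functorial factorizations of the cofibrantly generated covariant model structure, and your equivalence $\rho_!\rho^*F(a)\simeq F(a)$ is indeed the counit, because the fibre inclusion $(\rho^*F)_a\to\rho_!\rho^*F(a)$ is the unit on fibres and its composite with $\epsilon_F(a)$ is the identity by the triangle identity together with the naturality of \Cref{freegaza}. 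As a small bonus, your route establishes the stronger intermediate statement that $\rho_!$ computes covariant derived fibres for \emph{all} objects, hence preserves as well as reflects covariant weak equivalences, which the paper's proof does not make explicit.
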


\begin{proof}
	
	\noindent It is a standard result in model category theory that it is enough to prove that $\rho^*$ reflects weak equivalences between fibrant objects and that the derived unit evaluated at a bifibrant object is a weak equivalence. Let us prove these two facts.
	
	\noindent Let $\varphi\colon F \to G$ be a morphism between Kan-enriched functors. Since $\rho^*$ is right Quillen, both $\rho^*(F)$ and $\rho^* (G)$ are left fibrations, which means that $\rho^*(\varphi)$ is a covariant weak equivalence if and only if it is a fibrewise weak homotopy equivalence of spaces. Given an object $a$ of $A$, after \Cref{freegaza} there is a natural  equivalence between the map of fibres
	\[ \rho^*(\varphi)_a  \colon   \rho^*(F)_a \longrightarrow \rho^*(G)_a\]
and the map between the evaluations
\[	\varphi_a \colon F(a) \longrightarrow  G(a).\] Therefore $\rho^*(\varphi)$ is a weak equivalence if and only if $\varphi$ is, so in particular $\rho^*$ reflects weak equivalences.

	\noindent For the second condition, observe that every object in $\ssets/\cN A$ is cofibrant, so let $(X,f)$ be a left fibration and $\mathbb{L}\eta_{(X,\alpha)}$ be the derived unit evaluated at $(X,f)$. This can be written as the composition $$ \mathbb{L}\eta_{(X,f)}\colon (X,f) \xrightarrow{\eta_{(X,f)}} \rho^*\rho_! (X,f) \xrightarrow{\rho^*(\cR)} \rho^* (F),$$ where $\cR\colon \rho_!(X,f)\xrightarrow {\sim}F$ is a fibrant replacement for $\rho_!(X,\alpha)$. As both domain and codomain are left fibrations and weak equivalences between left fibrations are fibrewise weak homotopy equivalences, the derived unit is a weak equivalence if and only if it is fibrewise so. Consider an object $a$ of $A$; as $\rho^*(\cR)_a\simeq \cR_a$ and $\cR$ is a projective weak equivalence, it is sufficient to show that the map of fibres  $(X,f)_a\to \rho^*(\rho_!(X, f))_a \simeq \rho_!(X,f)(a)$ given by the non-derived unit is a weak homotopy equivalence of simplicial sets. This map appears as the top horizontal arrow in the pullback
	\[
	\begin{tikzcd}
		(X, f)_a  \arrow[d] \arrow[r] & X \times_{\cN A} \cN (A/a) \arrow[d]\\
\Delta^0 \arrow[r, "\{\id_a\}"] & \cN (A/a) \ .
	\end{tikzcd}
	\]

\noindent The map $\Delta^0\xrightarrow{\{\id_a\}} \cN (A/a)$ is \emph{right anodyne}, which means it belongs to the closure under pushouts, retracts and transfinite compositions of the set of right horn inclusions ${\{\Lambda^n_k\to \Delta^n\}_{n, 0<k\leq n}.}$ As left fibrations are stable under pullbacks, also the map $X\times_{\cN A}\cN(A/a) \to \cN(A/a) $ is a left fibration. As proven in \cite[Proposition 2.10]{HHR:SPST}, the pullback of a right anodyne map along a left fibration is again right anodyne. As right anodyne maps are in particular weak homotopy equivalences of simplicial sets, this concludes the proof.
\end{proof}

\subsection{The monoidal Quillen equivalence}\label{section6.2}

\noindent Consider now a symmetric monoidal category $A=(A,\otimes,1_A)$. As simplicial sets are monoidal with respect to cartesian product, the category $\mathsf{Fun}(A,\ssets)$ has a symmetric monoidal structure given by \emph{Day convolution}: given functors $F,G\colon A \to \ssets$, their tensor product $F\underset{Day}{\otimes} G \colon A \to \ssets$ is the two-variables left Kan extension of the product of $F$ and $G$ $$ A\times A \longrightarrow \ssets \ , \ (a,b)\mapsto F(a)\times G(b)$$ along the tensor product of $A$, so that $$ \left(F\underset{Day}{\otimes} G\right) (c)\simeq \underset{a\otimes b \to c}{\colim}\ F(a)\times G(b).$$ \noindent The unit is given by the Yoneda embedding of the unit of $A$, seen as a discrete simplicial presheaf on $A$.

\noindent The over-category $\ssets/\cN A$ also has a symmetric monoidal structure $\boxtimes$, defined as $$\boxtimes\colon \ssets/\cN A\times \ssets/\cN A \xlongrightarrow{-\times-} \ssets/(\cN A\times \cN A )\xlongrightarrow{m_!} \ssets/\cN A,$$ where $m_!$ is the post-composition with the nerve of the tensor product of $A$. The unit for the monoidal structure on $\ssets/\cN A$ is given by the map $(\ast, \mathbbm{1}_A\colon \ast \xrightarrow{\{1_A\}} \cN A)$ selecting the tensor unit of $A$.

\noindent A \emph{monoidal Quillen model category} is a symmetric monoidal category $(M,\otimes,1_M)$ endowed with a model category structure and whose tensor product which is appropriately compatible with the model category structure, ensuring, for instance, that the homotopy category is also symmetric monoidal. In particular, in a monoidal model category the tensor product has to be a \emph{left Quillen bifunctor}: given two cofibrations $i\colon X\to Y$, $j\colon U \to V$, their \emph{pushout product} $$i\square j\colon X\otimes V \bigcup_{X\otimes U} Y \otimes U \longrightarrow Y\otimes V$$ is a cofibration, which is also a weak equivalence if $i$ or $j$ is.

\begin{prop}[{\cite[Proposition 2.2.15]{I:CHTMMC}}]
Let $(A,\otimes,1_A)$ be a symmetric monoidal category. The  projective model structure on $\mathsf{Fun}(A,\ssets)$ is model monoidal with respect to Day convolution.
\end{prop}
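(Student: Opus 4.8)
The plan is to verify the two defining properties of a monoidal model category for the pair (projective model structure, Day convolution): that $\underset{Day}{\otimes}$ is a left Quillen bifunctor (the pushout-product axiom) and that the unit axiom holds. Throughout I write $L_a\colon \ssets\to \mathsf{Fun}(A,\ssets)$ for the left adjoint to evaluation $\mathrm{ev}_a$ at an object $a\in A$; explicitly $L_a(S)=A(a,-)\cdot S$ is the objectwise copower of $S$ by the discrete sets $A(a,-)$. Since projective fibrations and weak equivalences are detected objectwise, each $\mathrm{ev}_a$ is right Quillen and hence each $L_a$ is left Quillen. Recall also that the projective model structure is cofibrantly generated, with generating cofibrations $\{L_a(\partial\Delta^m\to\Delta^m)\}$ and generating trivial cofibrations $\{L_a(\Lambda^m_k\to\Delta^m)\}$.

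First I would record the key computation
$$ L_a(S)\underset{Day}{\otimes} L_b(T)\;\cong\; L_{a\otimes b}(S\times T), $$
natural in $S,T\in\ssets$ and $a,b\in A$. This rests on two facts about Day convolution: that it sends representables to the representable of the tensor, $A(a,-)\underset{Day}{\otimes}A(b,-)\cong A(a\otimes b,-)$, which is immediate from its description as a left Kan extension along $\otimes\colon A\times A\to A$; and that it is compatible with the tensoring of $\mathsf{Fun}(A,\ssets)$ over $\ssets$, since it preserves colimits --- equivalently copowers --- in each variable, being a two-variable left adjoint.

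Next I would reduce the pushout-product axiom to generators. Because the pushout-product $-\,\square\,-$ preserves colimits separately in each variable, the standard pushout-product reduction shows it suffices to check that $i\,\square\,j$ is a cofibration (resp.\ trivial cofibration) when $i,j$ are generating cofibrations (resp.\ when one of them is a generating trivial cofibration). Taking $i=L_a(\partial\Delta^m\to\Delta^m)$ and $j=L_b(\kappa)$ with $\kappa$ either $\partial\Delta^n\to\Delta^n$ or $\Lambda^n_k\to\Delta^n$, the displayed isomorphism together with the fact that the left adjoint $L_{a\otimes b}$ preserves pushouts identifies $i\,\square\,j$ with $L_{a\otimes b}$ applied to the pushout-product of $\partial\Delta^m\to\Delta^m$ and $\kappa$ formed in $\ssets$ with the cartesian product. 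As the Kan--Quillen model structure on $\ssets$ is itself a monoidal model category, this corner map is a monomorphism, and is moreover anodyne when $\kappa$ is a horn inclusion; applying the left Quillen functor $L_{a\otimes b}$ then yields a projective cofibration, respectively a projective trivial cofibration. This establishes the pushout-product axiom.

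Finally, the unit axiom is immediate: the Day convolution unit is the corepresentable $A(1_A,-)$, which equals $L_{1_A}(\Delta^0)$ and is therefore cofibrant, being the image of the cofibrant object $\Delta^0$ under the left Quillen functor $L_{1_A}$; a cofibrant unit makes the unit axiom automatic. I expect the only genuinely substantive point to be the compatibility isomorphism $L_a(S)\underset{Day}{\otimes}L_b(T)\cong L_{a\otimes b}(S\times T)$, as everything else then reduces to the classical monoidality of the Kan--Quillen model structure and formal properties of cofibrantly generated monoidal model categories.
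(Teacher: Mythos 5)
Your proof is correct, but there is no internal argument to compare it against: the paper states this proposition with the citation \cite[Proposition 2.2.15]{I:CHTMMC} and gives no proof of its own. Your argument is the standard one for Day convolution on a projective model structure (and, in substance, the argument of the cited reference): reduce the pushout-product axiom to the generating (trivial) cofibrations $L_a(\partial\Delta^m\to\Delta^m)$ and $L_a(\Lambda^m_k\to\Delta^m)$, identify
\[
L_a(S)\otimes_{\mathrm{Day}}L_b(T)\;\cong\;L_{a\otimes b}(S\times T),
\]
use that $L_{a\otimes b}$ preserves pushouts to rewrite $i\mathbin{\square}j$ as $L_{a\otimes b}$ of the corresponding corner map in $\ssets$, and then transport the classical pushout-product property of the Kan--Quillen structure through the left Quillen functor $L_{a\otimes b}$; cofibrancy of the unit $A(1_A,-)=L_{1_A}(\Delta^0)$ disposes of the unit axiom. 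All three ingredients are sound, and the reduction to generators is legitimate since $-\mathbin{\square}-$ preserves colimits in each variable. One small suggestion: the key isomorphism is most robustly established by a direct coend computation,
\[
\int^{x,y}A(x\otimes y,c)\cdot\bigl(A(a,x)\cdot S\times A(b,y)\cdot T\bigr)\;\cong\;A(a\otimes b,c)\cdot(S\times T),
\]
applying co-Yoneda once in each variable; this sidesteps having to argue that Day convolution preserves $\ssets$-copowers in each variable (true, since the Day structure is closed and hence a two-variable left adjoint, but worth phrasing carefully because copowers are weighted rather than conical colimits). What your write-up buys over the paper's bare citation is a self-contained proof in the paper's own conventions.
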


\noindent The covariant model structure on $\ssets/\cN A$ is also model monoidal. In order to show this, let us recall that

\begin{prop}\label{mon1}
	Let $(A,\otimes,1_A)$ be a symmetric monoidal category. The covariant model structure on the symmetric monoidal category $\left(\ssets/\cN A,\boxtimes, \mathbbm{1}_A \right)$ is model monoidal.
\end{prop}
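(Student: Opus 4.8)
The plan is to verify the two axioms of a monoidal model category for $\left(\ssets/\cN A, \boxtimes, \mathbbm{1}_A\right)$ equipped with the covariant model structure: the unit axiom and the pushout-product axiom (i.e.\ that $\boxtimes$ is a left Quillen bifunctor). The unit axiom is immediate, since the unit $\mathbbm{1}_A=(\Delta^0, \{1_A\})$ is a monomorphism over $\cN A$ and hence cofibrant; tensoring with a cofibrant replacement of the unit is then already an isomorphism. So the whole content lies in the pushout-product axiom.

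The strategy I would follow is to obtain the covariant monoidal structure by localizing an ``easy'' monoidal model structure. First, the Joyal model structure on $\ssets$ is a cartesian monoidal model category (see \cite{Lu:HTT}), and since $A$ is \emph{symmetric} monoidal, the nerve $\cN A$ is a commutative monoid object in $(\ssets,\times)$, with multiplication $\cN m\colon \cN A\times \cN A\to \cN A$ and unit $\{1_A\}\colon \Delta^0\to \cN A$. Consequently the slice $(\ssets_{\mathrm{Joyal}})_{/\cN A}$, equipped with the relative product $\boxtimes$, is again a monoidal model category: the forgetful functor to $\ssets$ preserves colimits, detects cofibrations and weak equivalences, and carries $\boxtimes$ to $\times$, so both the pushout-product and the unit axioms are inherited from $(\ssets_{\mathrm{Joyal}},\times)$. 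This already records that $\boxtimes$ is a left Quillen bifunctor for the slice Joyal structure; in particular the cofibration half of the pushout-product axiom, which only concerns monomorphisms and which the localization does not alter, holds verbatim for the covariant structure as well.

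Next, by \Cref{covmambo} applied to $X=\cN A$ and restricted to simplicial sets, the covariant model structure on $\ssets/\cN A$ is the left Bousfield localization of the (left proper, combinatorial) slice $(\ssets_{\mathrm{Joyal}})_{/\cN A}$ at a set $\mathcal S$ of left anodyne maps (for instance the left horn inclusions $\Lambda^n_i\hookrightarrow \Delta^n$, $0\le i<n$, over $\cN A$, which are maps between cofibrant objects). I would then invoke the standard criterion for a left Bousfield localization of a monoidal model category to remain monoidal: it suffices to check that, for each $s\in\mathcal S$ and each generating cofibration $g\colon \partial\Delta^m\hookrightarrow \Delta^m$ over $\cN A$, the pushout-product $s\,\square\,g$ is a covariant (i.e.\ $\mathcal S$-local) weak equivalence.

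This last verification is the heart of the matter and the step I expect to be the main obstacle. Underlying, $s\,\square\,g$ is the pushout-product in $(\ssets,\times)$ of a left anodyne map with a monomorphism, which is again left anodyne by the pushout-product theorem for left anodyne maps \cite{Lu:HTT}. Since left anodyne maps are trivial cofibrations in the covariant model structure, and in particular covariant weak equivalences, the criterion is satisfied, and therefore the localized structure --- that is, the covariant model structure together with $\boxtimes$ --- is a monoidal model category. The same left anodyne pushout-product input, packaged through the localization, simultaneously delivers the trivial-cofibration half of the pushout-product axiom that the Joyal-level argument alone could not see.
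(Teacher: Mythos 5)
Your proof is correct, but it takes a genuinely different route from the paper's. The paper argues directly that $\boxtimes$ is a left Quillen bifunctor for the covariant structure: the cofibration half is immediate, and for the trivial-cofibration half it invokes \cite[Lemma 3.3]{NS:PSMICRSMMC} --- for a covariant weak equivalence $\varphi$ over a base, the functor $\varphi\times -$ is left Quillen for the contravariant structure, hence by duality for the covariant one --- so that in the pushout-product square the maps of the form $i\times\id$ are covariant equivalences; left properness of the covariant structure and 2-out-of-3 then make $i\square j$ an equivalence, and the left Quillen functor $m_!$ carries the conclusion into $\ssets/\cN A$. You instead present the covariant structure as the left Bousfield localization of the Joyal slice structure at the left horn inclusions over $\cN A$, observe that the Joyal slice structure is monoidal for $\boxtimes$ (everything being created by the forgetful functor to $\ssets$), and feed the classical fact that the pushout-product of a left anodyne map with a monomorphism is left anodyne into a White-type monoidal localization criterion, using also that maps over $\cN A$ with left anodyne underlying map are covariant trivial cofibrations. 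This is sound, and the criterion's hypotheses are indeed satisfied here because every object of $\ssets/\cN A$ is cofibrant (so tractability and flatness of cofibrant objects are automatic). What your route buys is that the only nontrivial combinatorial input is the classical stability of left anodynes under pushout-products, with no need for left properness or the Nikolaus--Sagave lemma; what it costs is reliance on two pieces of machinery that should be cited precisely: the identification of the localizing set (\Cref{covmambo} only asserts that the covariant structure is \emph{some} localization of the slice structure; that it is the localization at the left horns is standard, see \cite{HeMo:SDHT} or \cite{Lu:HTT}) and the monoidal Bousfield localization theorem itself. The paper's proof stays entirely inside the covariant structure and avoids localization technology, at the price of the external Nikolaus--Sagave input and the contravariant-to-covariant duality step.
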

\begin{proof}
	\noindent The functor $m_!$ is a left Quillen functor (\cite{HeMo:SDHT}), and as the unit of the tensor product is cofibrant, the statement reduces to showing that $-\boxtimes-$ is a left Quillen bifunctor. It is clearly cocontinuous in both variables, and it is also easy to see that the pushout product of two cofibrations is a cofibration as well. Consider  a trivial cofibration $i\colon (X,f) \to (Y,g)$; its pushout-product $i\square j$ with $j\colon (U,h)\to (V,k)$ is the morphism appearing in the following diagram
	\[
	\begin{tikzcd}
		(X,f)\times (U,h) \arrow[d, "i\times \id"'] \arrow[r] & (X,f)\times (V,k) \arrow[d] \arrow[rdd, "i\times \id", bend left] &           \\
		(Y,g)\times (U,h) \arrow[r] \arrow[rrd, bend right]     & \bullet \arrow[rd, "i\square j"]                            &           \\
		&                                                             & (Y,g)\times (V,k)
	\end{tikzcd}
	\]
	
		\noindent By \cite[Lemma 3.3]{NS:PSMICRSMMC}, for any simplicial set $X$ and any covariant weak equivalence $\varphi\colon S\to T$ in $\ssets/X$, the functor  $$ \varphi\times - \colon \ssets/Y \longrightarrow \ssets/X\times Y$$ is left Quillen with respect to the \emph{contravariant} model structure, which is obtained from the covariant model structure by applying the duality $\Delta\to\Delta, \ [n]\mapsto [n]^{\text{op}}$. It follows that $\varphi\times - $ is left Quillen also with respect to the covariant model structure.
	
	\noindent In particular, $i\times \id$ is a weak equivalence; since the covariant model structure is also left proper, the morphism $(X,f)\times (V,k) \to \bullet$ is a weak equivalence as well, and by the $2 $-out-of-$3$ property $i\square j$ is a weak equivalence as well. This concludes the argument.
\end{proof}

\begin{theorem}\label{kiku}
	If $A$ is a symmetric monoidal category, the Quillen adjunction $$\adjunction{\rho_!}{\ssets/\cN A }{\mathsf{Fun}(A,\ssets)}{\rho^*}$$ is a monoidal Quillen equivalence of monoidal model categories for the above monoidal model structures.
\end{theorem}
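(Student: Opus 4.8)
The strategy is to show that the left adjoint $\rho_!$ is \emph{strong monoidal} with respect to $\boxtimes$ and Day convolution; combined with \Cref{bonobo} and the fact that both sides are monoidal model categories (\Cref{mon1} for $\ssets/\cN A$, and \cite{I:CHTMMC} for $\mathsf{Fun}(A,\ssets)$), this upgrades the Quillen equivalence to a monoidal one. Indeed, by doctrinal adjunction a strong monoidal left adjoint has a canonically lax monoidal right adjoint, so $\rho^*$ becomes lax monoidal; and for a strong monoidal \emph{left Quillen} functor the comonoidal comparison maps and the unit map are isomorphisms, hence in particular weak equivalences, so the Schwede--Shipley conditions for a monoidal Quillen pair hold automatically. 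Since every object of $\ssets/\cN A$ is cofibrant and the $\boxtimes$-unit $\mathbbm{1}_A=(\ast,\{1_A\})$ is cofibrant, no cofibrant-replacement correction is needed, and being moreover a Quillen equivalence makes $(\rho_!,\rho^*)$ a monoidal Quillen equivalence. It therefore only remains to exhibit the strong monoidal structure on $\rho_!$.

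Concretely, I would produce natural isomorphisms $\rho_!\big((X,f)\boxtimes(Y,g)\big)\cong \rho_!(X,f)\underset{Day}{\otimes}\rho_!(Y,g)$ and $\rho_!(\mathbbm{1}_A)\cong \mathbbm{1}_{Day}$, compatible with the associativity, unit and symmetry constraints. Both bifunctors $\rho_!(-\boxtimes -)$ and $\rho_!(-)\underset{Day}{\otimes}\rho_!(-)$ preserve colimits separately in each variable---$\rho_!$ because it is a left adjoint, and $\boxtimes$ and Day convolution by construction---so it suffices to produce the comparison on representables $([m],\alpha),([n],\beta)$ and check its naturality, after which it extends uniquely to all objects by density of representables.

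For the representable case I would use the explicit description $\rho_!(X,f)(a)\simeq X\times_{\cN A}\cN(A/a)$ obtained at the start of \Cref{section6.1}. Writing $N=\cN A$, which the symmetric monoidal structure of $A$ makes a commutative monoid in $(\ssets,\times)$ via $m=\cN(\otimes)$, the left-hand side evaluates at an object $c$ to $(\Delta^m\times\Delta^n)\times_N N_{/c}$, the fibre product being taken along $m\circ(\alpha\times\beta)$, while the Day convolution evaluates to $\colim_{a\otimes b\to c}\big(\Delta^m\times_N N_{/a}\big)\times\big(\Delta^n\times_N N_{/b}\big)$. The comparison sends a simplex represented by $(a,b,\sigma\colon a\otimes b\to c)$ together with $(x,\psi\colon \alpha(x)\to a)$ and $(y,\chi\colon\beta(y)\to b)$ to the simplex $(x,y,\sigma\circ(\psi\otimes\chi))$. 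The key computation---and the main obstacle---is to check that this is a bijection in each simplicial degree: the point is that every morphism in the indexing category $(A\times A)_{/c}$ leaves the $\Delta^m$- and $\Delta^n$-components untouched and preserves the composite $\sigma\circ(\psi\otimes\chi)$, so each class in the colimit admits a unique normal form with $a=\alpha(x)$, $b=\beta(y)$ and $\psi,\chi$ identities; this identifies the colimit on the nose with $(\Delta^m\times\Delta^n)\times_N N_{/c}$. The unit isomorphism is the degenerate case $\rho_!(\mathbbm{1}_A)(a)\simeq \ast\times_N N_{/a}\cong \Hom_A(1_A,a)$, which is exactly the value at $a$ of the Day unit, namely the representable $\Hom_A(1_A,-)$.

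The remaining verifications are formal: naturality of the comparison in both variables, its compatibility with the associativity, symmetry and unit coherences (which descend from the corresponding coherences of the commutative monoid $m$ and of the fibre-product/colimit constructions), and the extension from representables to arbitrary objects. Assembling these yields the strong monoidal structure on $\rho_!$, and together with the reduction of the first paragraph this proves that $(\rho_!,\rho^*)$ is a monoidal Quillen equivalence. I expect the normal-form bijection to be the only genuinely load-bearing step; everything else is either a coherence bookkeeping exercise or an application of \Cref{bonobo}.
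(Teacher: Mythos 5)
Your proposal is correct and follows essentially the same route as the paper: after \Cref{bonobo}, everything reduces to exhibiting a strong monoidal structure on $\rho_!$, and your normal-form bijection (collapsing the Day colimit by choosing the representative with identity augmentations $a=\alpha(x)$, $b=\beta(y)$) is exactly the computation underlying the paper's laxity/colaxity maps and its check that they are mutually inverse. The only cosmetic difference is that the paper builds both comparison maps directly on arbitrary objects $(U,h),(V,k)$ via the pointwise formula $\rho_!(U,h)(a)\simeq U\times_{\cN A}\cN(A/a)$, whereas you first reduce to representables by cocontinuity and then extend; both steps are valid, and your treatment of the unit and of the Schwede--Shipley framework is, if anything, slightly more explicit than the paper's.
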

\begin{proof}
	After \Cref{bonobo}, the adjunction  $(\rho_!,\rho^*)$ is a Quillen equivalence, so we only need to prove $\rho_!$ is monoidal. Write $X\coloneqq \cN A$, and fix two elements $(U,h)$ and $(V,k)$ in $\ssets/X$. We prove that $\rho_!$ is monoidal in three steps.

	\begin{enumerate}
		
		\item  First, we prove that $\rho_!$ is lax monoidal. We construct the laxity map $$ \rho_!(U,h)\otimes_{\text{Day}} \rho_!(V,k)\longrightarrow \rho_!((U,h) \boxtimes (V,k))$$ as follows. Fix an object $a$ of $A$ and a map $\phi\colon b\otimes c \to a$ for objects $b$ and $c$. There is a commutative diagram
		\[
		\begin{tikzcd}
			(U\times V)\underset{\cN A \times \cN A}{\times} \left( \cN (A/a)\times \cN(A/b) \right) \arrow[d, "\text{pr}_2"] \arrow[r, "\text{pr}_1"] & U\times V \arrow[rr,"\beta\times\gamma"] & &\cN A\times \cN A \arrow[d, "\otimes"]  \\
			\cN (A/a)\times \cN(A/b) \arrow[r,"m_!"] & \cN (A/a\otimes b) \arrow[r, "\phi_*"] & \cN (A/c) \arrow[r] & \cN A
		\end{tikzcd}
		\]
	and they induce the map	$$  \left(\rho_!(U,h)\underset{\text{Day}}{\otimes} \rho_!(V,k) \right)(c) \simeq \underset{a\otimes b\to c}{\colim} (U\times V)\underset{\cN A \times \cN A}{\times} \left( \cN (A/a)\times \cN(A/b) \right)\longrightarrow (U\times V)\times_{\cN A} \cN (A/	c),$$ as wanted.
		
		\item The functor $\rho_!$ is also colax monoidal, and the colaxity map $$ \rho_!((U,h)\boxtimes (V,k))\longrightarrow \rho_!(U,h)\underset{\text{Day}}{\otimes} \rho_!(V,k)$$ is constructed as follows. Let $c$ be an object in $A$ and let $n\geq 0$. A $n$-simplex $X$ of ${\rho_!((U,h)\boxtimes(V,k))(c)}$ corresponds to a tuple $((a,b), x)$, where $a\in U_n$, $b\in V_n$, $x\in \cN(A/c)_n$, satisfying the property that $(h\boxtimes k)_n (a,b)= \cU(x)$, where $\cU\colon \cN(A/c)\to \cN(A)$ is the nerve of the forgetful functor.	In other words, if we write $$h_n(a)=a_0\to \dots \to a_n, \quad  k_n(b)=b_0\to \dots b_n, \text{ and } {x =c_0\to c_1\to \dots \to c_n \to c} \quad ,$$ the property that $((a,b),x)$ satisfies is that $$ \text{ $c_i= a_i\otimes b_i$ and $c_i\to c_{i+1}= (a_i\to a_{i+1})\otimes ( b_i\to b_{i+1})$ }$$  for all $i$ for which it makes sense.
		
		\noindent In particular, the couple $$( (a, a_0\to \dots \to a_n \xrightarrow{id}a_n), (b, b_0\to \dots \to b_n \xrightarrow{id} b_n))$$ determines a $n$-simplex of $$\left(U\underset{\cN A}{\times} \cN (A/a_n)\right)\times \left(V\underset{\cN A}{\times}\cN(A/b_n)\right)$$ which determines a $n$-simplex of the colimit $ (\rho_!(h)\underset{\text{Day}}{\otimes}\rho_!(k))(c)$ relative to the component determined by  the morphism ${(c_n=a_n\otimes b_n)\to c}$.
		\item $\rho_!$ is strong monoidal, because the laxity and colaxity map of, resp., points $(1)$ and $(2)$, are one the inverse of the other. That the composite $$\rho_!((U,h)\boxtimes(V,k))\longrightarrow\rho_!(U,h)\underset{\text{Day}}{\otimes} \rho_!(V,k)\longrightarrow \rho_!(h\boxtimes k)$$ is the identity is immediate to check; to see that the composite $$\rho_!(U,h)\underset{\text{Day}}{\otimes} \rho_!(V,k)\longrightarrow \rho_!((U,h)\boxtimes(V,k))\longrightarrow \rho_!(U,h)\underset{\text{Day}}{\otimes} \rho_!(V,k)$$ is the identity as well, it suffices to notice that, given a morphism $\varphi\colon d_1\otimes d_2\to c $ (using the same notations as in the previous point), the $n$-simplices 
		$$((a, a_0\to \dots \to a_n \to d_1), (b, b_0\to \dots\to b_n \to d_2 )) \text{ in $\rho_!(U,h)(d_1)\times \rho_!(V,k)(d_2)$ }$$ and $$ ((a, a_0\to \dots \to a_n \xrightarrow{\id} a_n), (b, b_0\to \dots \to b_n \xrightarrow{\id} b_n)) \text{ in $\rho_!(U,h)(a_n)\times \rho_!(V,k)(b_n)$ }$$ are identified under Day convolution.
	\end{enumerate}
	
\end{proof}

\section{The Quillen equivalence}\label{section8}

\noindent After \Cref{section6}, we know that for a discrete category $A$ and an element $(M,f)$ of $\ssets/\cN A$, for every object $a$ of $A$ one has $$ \rho^A_!(M,f)(a)\simeq X \times_{\cN A} \cN (A/a).$$

\noindent When we consider a discrete operad $P$, explicitely describing the simplicial $P$-algebra $\rho_!^P(X,f)$ for $(X,f)$ not representable in $\dsets/\cN_d P$ is a hard procedure, essentially because (non-filtered) colimits in $\alg_P(\ssets)$ cannot be computed object-wise when $P$ has also non-unary multimorphisms. 

\noindent The solution we adopt is to move to the language of $\infty$-categories, relating the functor of $\infty$-categories induced by the rectification functor (which is left Quillen) with the operadic straightening functor $\mathsf{St}^P$ defined in \cite{P:SUEIO}. In addition, we obtain a \emph{up-to-homotopy} explicit description of the simplicial $P$-algebra $\rho_!^P(X,f)$ for any $(X,f)$ in $\dsets/\cN_d P$.

\noindent In the next sections, we assume the knowledge of $\infty$-categories (\cite{Lu:HTT}) and Lurie's $\infty$-operads and symmetric monoidal $\infty$-categories as defined in \cite{Lu:HA}. We refer to \cite[\S 1]{P:SUEIO} for a concise and reasonably self-contained account of these notions.

\subsection{From model categories to $\infty$-categories}\label{backforth}\label{sofar}

\noindent Let us concisely reecall how to translate model categorical statements into statements of $\infty$-categories.

\begin{itemize}
	\item To any model category $\cM$ one can associate its \emph{underlying $\infty$-category $\cM_\infty$}. Explicitly, $\cM_\infty$ can be obtained as the homotopy coherent nerve of a fibrant replacement of the Dwyer-Kan localization of $\cM$ (\cite{DK:CSL}); in this model, $\cM$ and $\cM_\infty$ have the same set of objects. We say that an $\infty$-category $X$ is \emph{presented by} a model category $\cM$ if there exists an equivalence $X\simeq \cM_\infty$.
 	\item Any left (resp. right) Quillen functor $F\colon \cM\to \cM'$ induces a left (resp. right) adjoint functor of $\infty$-categories $F_\infty\colon\cM_\infty \to \cM'_\infty$ (\cite[Theorem 2.1]{MG:QAIAQC}). On objects, it can be computed as $F_\infty(X)=\mathbb{L}F(X)=F(X^{\text{cof}})$ (resp. $F_\infty(X)=\mathbb{R}F(X) \simeq F(X^{\text{fib}})$), where $\mathbb{L}F$, resp. $\mathbb{R}F$, denotes the left, resp. right, derived functor of $F$, and $X^{\text{cof}}$, resp. $X^{\text{fib}}$, is a cofibrant, resp. fibrant, replacement of $X$ in $\cM$. Quillen embeddings correspond to fully faithful left adjoints, and any Quillen equivalence induces an equivalence of $\infty$-categories, as follows from \cite[\S A.2]{MG:QAIAQC} and \cite[\S 1.3.4]{Lu:HA}.
	
	\item  Given a functor $\cG$ of $\infty$-categories and a Quillen functor $F$ of model categories, we say that \emph{$F$ presents $\cG$} if there is an equivalence $\cG \simeq F_\infty$.
	
	\item There is an equivalence of functors $\mathbb{L}F \simeq \mathsf{ho}(F_\infty)$ where $\mathsf{ho}(-)$ is the homotopy category functor, left adjoint of the nerve ${\Cat\to \ssets.}$
	
	\item  Any monoidal Quillen model category $\cM$ yields a symmetric monoidal $\infty$-category $\cM^\otimes_{\infty} $, whose underlying $\infty$-category is $\cM_\infty$ (\cite{H:DKLR}).
	
	\item A functor of $\infty$-categories $L\colon \cC\to\cD$ is a \emph{localization} if it has a fully-faithful right adjoint. In particular, any localization functor between presentable $\infty$-categories is cocontinuous and essentially surjective.
	
	\item If $L\colon \cM \to \cN$ is a left Bousfield localization of Quillen model categories, the induced functor of $\infty$-categories $L_\infty\colon \cM_\infty \to \cN_\infty$ is a localization. 

\end{itemize}

\noindent Also, let us introduce some

\begin{notation}\hfill
	\begin{itemize}
		\item We write $\cS$ for the $\infty$-category of spaces, presented by the Kan-Quillen model structure on simplicial sets.
		\item Given a quasioperad $X$, we write $\mathsf{dLeft}_X$ for the $\infty$-category of dendroidal left fibrations over $X$, presented by the covariant model structure on $\dsets/X$. Observe that, if $X=\cC$ is in fact a quasicategory, there is an isomorphism of covariant model structures $\dsets/\cC\simeq \ssets/\cC$, and we just write $\mathsf{Left}_\cC$ for the $\infty$-category it presents.
		\item  Given a discrete operad $P$, we write $\alg_P(\cS)$ for the $\infty$-category presented by the projective model structure on $\alg_P(\ssets)$. It is a consequence of \cite[Theorem 7.11]{PS:ARCSO} that, when $P$ is also $\Sigma$-free, the $\infty$-category $\alg_P(\cS)$ is equivalent to the $\infty$-category of $P$-algebras in Lurie's formalism \cite{Lu:HA}.
		\item We write $\mathsf{dOp}_\infty$ for the $\infty$-category presented by the model structure for quasioperads on $\dsets$.
	
	\end{itemize}
\end{notation}

\noindent The equivalences proven in \Cref{section6} can be formulated as follows.

\begin{coro}[\Cref{bonobo}]\label{coro1}
	Let $A$ be a discrete category $A$. The rectification functor and the relative nerve functor are mutually inverse in an equivalence of $\infty$-categories $$\adjunction{(\rho_!^A)_\infty}{\mathsf{Left}_{\cN A}}{\mathsf{Fun}(A, \cS)}{(\rho^*_A)_\infty}.$$ 
\end{coro}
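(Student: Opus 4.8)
The plan is to read off this statement directly from the Quillen equivalence of \Cref{bonobo} via the model-categorical dictionary recalled in \Cref{sofar}. First I would fix the two presentations involved. Since $\cN A$ is the nerve of a category it is a quasicategory, so through the identification $\dsets/\cN A\simeq \ssets/\cN A$ of covariant model structures the $\infty$-category $\mathsf{Left}_{\cN A}$ is presented by the covariant model structure on $\ssets/\cN A$. On the other side, $\mathsf{Fun}(A,\ssets)=\alg_A(\ssets)$ with its projective model structure presents $\mathsf{Fun}(A,\cS)$ by definition.

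With these presentations in place, the core of the argument is to invoke that a Quillen equivalence induces an equivalence of underlying $\infty$-categories. By \Cref{bonobo}, the pair $(\rho_!^A,\rho^*_A)$ is a Quillen equivalence between the covariant and the projective model structures, hence it induces an equivalence of $\infty$-categories. By \cite[Theorem 2.1]{MG:QAIAQC}, the left Quillen functor $\rho_!^A$ induces a left adjoint $(\rho_!^A)_\infty=\mathbb{L}\rho_!^A$ and the right Quillen functor $\rho^*_A$ induces its right adjoint $(\rho^*_A)_\infty=\mathbb{R}\rho^*_A$; being the derived functors of a Quillen equivalence, these two adjoints are mutually inverse equivalences, which is exactly the asserted adjoint equivalence.

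There is essentially no obstacle here beyond bookkeeping, as the corollary is a formal reformulation of \Cref{bonobo}. The only point worth a moment's care is matching the named functors with the appropriate derived functors: the functor $(\rho^*_A)_\infty$ is the right derived functor $\mathbb{R}\rho^*_A$ of the relative nerve, computed on projectively fibrant algebras (those valued in Kan complexes), while $(\rho_!^A)_\infty$ is the left derived functor $\mathbb{L}\rho_!^A$ of the rectification, computed on cofibrant objects of $\ssets/\cN A$. Here the latter description simplifies further: by \Cref{covmambo} every object of $\ssets/\cN A$ is cofibrant, so no cofibrant replacement is needed and $(\rho_!^A)_\infty$ is modelled on the point-set level by $\rho_!^A$ itself.
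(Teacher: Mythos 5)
Your proposal is correct and follows exactly the paper's (implicit) argument: the paper states \Cref{coro1} without a separate proof, precisely because it is obtained by applying the dictionary of \Cref{sofar} --- a Quillen equivalence induces an equivalence of underlying $\infty$-categories, via \cite{MG:QAIAQC} --- to the Quillen equivalence of \Cref{bonobo}, with the presentations $\mathsf{Left}_{\cN A}\simeq (\ssets/\cN A)_\infty$ and $\mathsf{Fun}(A,\cS)\simeq \mathsf{Fun}(A,\ssets)_\infty$ identified as you do. Your additional remarks on derived functors and cofibrancy of all objects of $\ssets/\cN A$ are accurate and consistent with the paper's own usage in the proof of \Cref{bonobo}.
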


\begin{coro}[\Cref{kiku}]\label{coro2}
	Let $A$ be a discrete symmetric monoidal category. There is an equivalence of symmetric monoidal $\infty$-categories $$ \adjunction{(\rho_!^A)^\otimes_\infty}{(\mathsf{Left}_{\cN A})^\otimes}{\mathsf{Fun}( A, \cS)^\otimes}{(\rho^*_A)^\otimes_\infty}, $$ which on the underlying categories coincides with the equivalence in \Cref{coro1}.
	
\end{coro}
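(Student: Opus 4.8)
The plan is to deduce the statement from \Cref{kiku} by feeding it into the monoidal part of the dictionary recalled in \Cref{sofar}. By \Cref{kiku}, both $(\ssets/\cN A, \boxtimes, \mathbbm{1}_A)$ and $\mathsf{Fun}(A,\ssets)$ equipped with Day convolution are monoidal Quillen model categories, so by \cite{H:DKLR} each of them presents a symmetric monoidal $\infty$-category, namely $(\mathsf{Left}_{\cN A})^\otimes$ and $\mathsf{Fun}(A,\cS)^\otimes$, whose underlying $\infty$-categories are $\mathsf{Left}_{\cN A}$ and $\mathsf{Fun}(A,\cS)$ respectively. This already produces the two symmetric monoidal $\infty$-categories appearing in the statement.

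First I would record the inputs needed to promote $\rho_!^A$ to a symmetric monoidal functor of $\infty$-categories: that $\rho_!^A$ is left Quillen and \emph{strong} monoidal (\Cref{kiku}), that the tensor unit is cofibrant in both model categories (as used in the proof of \Cref{mon1}), and that in each the tensor product satisfies the pushout-product axiom (\Cref{mon1} and \cite[Proposition 2.2.15]{I:CHTMMC} for Day convolution). Under these hypotheses the construction $\cM\mapsto \cM_\infty^\otimes$ of \cite{H:DKLR} is functorial for strong monoidal left Quillen functors, so $\rho_!^A$ induces a symmetric monoidal functor $(\rho_!^A)^\otimes_\infty\colon (\mathsf{Left}_{\cN A})^\otimes\to \mathsf{Fun}(A,\cS)^\otimes$ whose underlying functor of $\infty$-categories is the left adjoint $(\rho_!^A)_\infty$ of \Cref{coro1}.

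Then I would conclude by a purely formal argument. By \Cref{coro1}, the underlying functor $(\rho_!^A)_\infty$ is an equivalence of $\infty$-categories; since a symmetric monoidal functor whose underlying functor is an equivalence is itself an equivalence of symmetric monoidal $\infty$-categories, $(\rho_!^A)^\otimes_\infty$ is such an equivalence. Its inverse carries a canonical symmetric monoidal structure, is identified with the right adjoint $(\rho^*_A)_\infty$, and is by definition $(\rho^*_A)^\otimes_\infty$; forgetting the monoidal structures recovers exactly the equivalence of \Cref{coro1}, as required.

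I expect the only genuinely delicate point to be the functoriality input in the second step: making precise that a strong monoidal left Quillen functor between monoidal model categories with cofibrant units induces a symmetric monoidal functor on underlying symmetric monoidal $\infty$-categories, compatibly with the assignment $\cM\mapsto\cM^\otimes_\infty$. The hard part here is citation rather than computation — I would extract the statement from \cite{H:DKLR}, or from the general theory of symmetric monoidal Dwyer-Kan localizations, taking care that $\rho_!^A$ preserves enough cofibrant objects that its strict strong-monoidal structure descends without higher corrections to the derived level.
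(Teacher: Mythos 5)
Your proposal is correct and follows exactly the route the paper takes: the paper offers no separate proof of \Cref{coro2}, treating it as an immediate consequence of \Cref{kiku} combined with the dictionary of \Cref{sofar} (in particular the item, citing \cite{H:DKLR}, that a monoidal Quillen model category yields a symmetric monoidal $\infty$-category), which is precisely what you unwind. Your identification of the one delicate point --- functoriality of $\cM\mapsto\cM^\otimes_\infty$ for strong monoidal left Quillen functors, eased here by the fact that every object of $\ssets/\cN A$ is cofibrant --- matches what the paper implicitly relies on.
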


\noindent In particular, the Quillen equivalence proven in \Cref{kiku} is a presentation of the monoidal un/straightening equivalence of \cite{R:MGCIC} for $A$ a discrete symmetric monoidal $\infty$-category. In particular, we can use the explicit formula for $\rho_!^A$ to compute the monoidal straightening functor.

\begin{coro}[of \Cref{adjj}]
	Let $P$ be a discrete operad. The Quillen adjunction $(\rho_!^P, \rho^*_P)$ induces an adjunction of $\infty$-categories $$ \adjunction{(\rho_!^P)_\infty}{\mathsf{dLeft}_{\cN_d P}}{\alg_P(\cS)}{(\rho^*_P)_\infty}.$$ 
\end{coro}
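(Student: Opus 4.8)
The plan is to deduce this statement directly from \Cref{adjj} by invoking the general dictionary between Quillen adjunctions and adjunctions of underlying $\infty$-categories recalled in \Cref{sofar}. First I would recall that, by \Cref{adjj}, the pair $(\rho_!^P,\rho^*_P)$ is a Quillen adjunction between the covariant model structure on $\dsets/\cN_d P$ and the projective model structure on $\alg_P(\ssets)$. Since $\rho_!^P$ is left Quillen and $\rho^*_P$ is right Quillen, the functoriality of the passage to underlying $\infty$-categories (\cite[Theorem 2.1]{MG:QAIAQC}) produces a left adjoint $(\rho_!^P)_\infty$ and a right adjoint $(\rho^*_P)_\infty$ between the corresponding underlying $\infty$-categories, which assemble into an adjunction.

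It then remains only to identify these underlying $\infty$-categories with the ones appearing in the statement. By our notational conventions, $\mathsf{dLeft}_{\cN_d P}$ denotes the $\infty$-category presented by the covariant model structure on $\dsets/\cN_d P$, while $\alg_P(\cS)$ denotes the $\infty$-category presented by the projective model structure on $\alg_P(\ssets)$. These are exactly the underlying $\infty$-categories of the two model categories featuring in \Cref{adjj}, so the adjunction obtained above is precisely of the claimed form $(\rho_!^P)_\infty \dashv (\rho^*_P)_\infty$.

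I do not expect any genuine obstacle here: all the mathematical content is already carried by \Cref{adjj}, and what remains is the purely formal invocation of the model-categorical-to-$\infty$-categorical translation. The only thing to verify is the bookkeeping that the notation $\mathsf{dLeft}_{\cN_d P}$ and $\alg_P(\cS)$ indeed refers to the underlying $\infty$-categories of the relevant model structures, which is immediate from the definitions set up in \Cref{sofar}.
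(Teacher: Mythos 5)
Your proposal is correct and matches the paper's treatment: the paper states this as an immediate corollary of \Cref{adjj}, obtained by the same formal invocation of the Quillen-to-$\infty$-categorical dictionary of \Cref{sofar} (in particular \cite[Theorem 2.1]{MG:QAIAQC}) together with the notational identifications of $\mathsf{dLeft}_{\cN_d P}$ and $\alg_P(\cS)$ as the underlying $\infty$-categories of the covariant and projective model structures. No further argument is needed.
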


\noindent We will prove that, when the discrete operad $P$ is moreover $\Sigma$-free, the functor $(\rho_!^P)_\infty$ is equivalent to the operadic straightening functor $\mathsf{St}^P$ of \cite{P:SUEIO}, whose definition we now recall.

\subsection{The operadic straightening functor}\label{panico}

\noindent In \cite{P:SUEIO}, we studied the $\infty$-category of space-valued algebras over \emph{any} $\infty$-operad $X$ by means of a pair of adjoint functors $(\mathsf{St}^X,\mathsf{UnSt}^X)$. In this section, we provide the defiition of $\mathsf{St}^X$ when $X$ is equivalent to a $\Sigma$-free discrete operad $P$ and avoid to go through all the formalism necessary for the more general definition. For this purpose, let us recall the definition of
\subsubsection{The symmetric monoidal envelope functor}

\noindent Let $\Op$ be the category of discrete operad and let $\mathsf{smCat}$ be the category of symmetric monoidal categories with strong monoidal functor. 

\begin{deff}
	The \emph{discrete symmetric monoidal envelope} is the left adjoint to the forgetful functor $U\colon \mathsf{smCat}\to \Op$. Explicitly, given a discrete operad $P$, the symmetric monoidal category $(\mathsf{env}(P),\boxplus, \emptyset)$ is defined as follows:
	\begin{itemize}
		\item the objects are given by the strings $(c_1,\dots,c_n)$ of objects of $P$ with $n\geq 0$, where the empty string is represented by $\emptyset$;
		\item a morphism $(c_1,\dots,c_n)\to (d_1,\dots,d_m)$ is given by a pair $(f,\{p_j\}_{j=1}^m)$, where $f\colon \{1,\dots,n\}\to \{1,\dots,m\}$ is a function of finite sets and $p_j$ is an operation in $P(\{c_i\}_{f(i)=j};d_j)$. Composition comes from the operadic composition of $P9$.
		\item The tensor product $\boxtimes$ is the concatenation of sequences, and the unit is the empty sequence.
	\end{itemize}
	\noindent We denote by $\env(P)$ the nerve of the underlying category of $\mathsf{env}(P)$.
\end{deff}

\noindent This construction generalizes to the context of $\infty$-operads (\cite{Lu:HA}), in the following sense. Let $\ell\mathsf{Op}_\infty$ be the $\infty$-category of Lurie's $\infty$-operads and $\mathsf{smCat}_\infty$ the $\infty$-category of symmetric monoidal $\infty$-categories, where morphisms are strong monoidal functors. There is a forgetful functor $U\colon \mathsf{smCat}_\infty\to \ell\Op_\infty$, and, after \cite{Lu:HA}, it admits a left adjoint$$\env(-)^\otimes \colon \ell\Op_\infty \to \mathsf{smCat}_\infty,$$ called the \emph{symmetric monoidal envelope functor}. It is compatible with the discrete symmetric monoidal envelope, in the sense that there is a commutative diagram of $\infty$-categories

\begin{center}
	\begin{tikzcd}
		\ell\Op_\infty \arrow[r, "\env(-)^\otimes"]   & \mathsf{smCat}_\infty          \\
		\Op \arrow[u, hook, "(-)^\otimes"] \arrow[r, "\mathsf{env}(-)"] & \mathsf{smCat} \ . \arrow[u, hook, "(-)^\otimes"']
	\end{tikzcd}
\end{center}

\noindent where $(-)^\otimes$ denotes the canonical full embeddings of discrete operads and discrete symmetric monoidal categories in their $\infty$-categorical counterpart.

\noindent In particular, for any $\infty$-operad $X$, we denote by $\env(X)$ the underlying $\infty$-category of the symmetric monoidal $\infty$-category $\env(X)^\otimes$.

\subsubsection{Another dendroidal model}\label{doechii}
\noindent Let $\mathsf{PSh}(\omg)$ denote the $\infty$-category of functors $\mathsf{Fun}(\cN(\omg)^{\text{op}},\cS)$. It can be realized, for instance, as the $\infty$-category presented by the projective model structure on simplicial presheaves on $\omg$ with respect to the Kan-Quillen model structure on simplicial sets.

\noindent An element $X$ in $\mathsf{PSh}(\omg)$ is called a complete dendroidal Segal space if it satisfies the dendroidal Segal condition and the completeness condition, as detailed for instance in \cite[\S 2.2.3]{HM:OELIODIO}; write $\mathsf{DOp}_\infty$ for the full sub $\infty$-category of $\mathsf{PSh}(\omg)$ spanned by the complete dendroidal Segal spaces. Complete dendroidal Segal spaces form another model for $\infty$-operads: after \cite{CM:DSSIO}, there is an equivalence of $\infty$-categories $$\adjunction{d_!}{ \mathsf{dOp}_\infty}{\mathsf{DOp}_\infty}{d^*}.$$ It has the property that, given a discrete operad $Q$, one has $d_!(\cN_d Q)\simeq \mathsf{disc}(\cN_d Q)$, where $\mathsf{disc}\colon \mathsf{Sets}\to \cS$ is the inclusion of the $1$-category of sets into the $\infty$-category of spaces.

\noindent Given a complete dendroidal Segal space $X$, we write $\mathsf{DLeft}_X$ for the full sub $\infty$-category of ${\mathsf{DOp}_\infty}_{/X}$ spanned by the objects $(Y,g)$ which are local with respect to dendroidal left and inner horn inclusions. This $\infty$-category is presented by a model structure on $\mathsf{Fun}(\omg^\text{op},\ssets)/X$ analogous to the covariant model structure (and usually called by the same name), which is compatible with the one relative to dendroidal sets in the following sense.

\begin{theorem}[\cite{CM:DSSIO}]\label{cisimo}
	Let $X$ be a quasioperad. There is a commutative diagram of $\infty$-categories
	\[\begin{tikzcd}
		{\mathsf{d}\Op_\infty}_{/X} \arrow[d]\arrow[r, "d_!"] & {\mathsf{D}\Op}_{d_! X} \arrow[d]\\
		\mathsf{d}\Left_X  \arrow[r] & \mathsf{D}\Left_{d_! X}
	\end{tikzcd}
	\] where the horizontal arrows are equivalences, and the vertical arrows are localizations.
\end{theorem}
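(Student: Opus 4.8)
The plan is to deduce the whole square from the single input that the comparison $d_!\colon\mathsf{dOp}_\infty\to\mathsf{DOp}_\infty$ is an equivalence of $\infty$-categories (\cite{CM:DSSIO}), by first slicing and then matching the two covariant localizations. Since an equivalence of $\infty$-categories induces an equivalence on all over-categories, $d_!$ restricts to an equivalence $(\mathsf{dOp}_\infty)_{/X}\xrightarrow{\,\sim\,}(\mathsf{DOp}_\infty)_{/d_! X}$ for every object $X$; this furnishes the top horizontal arrow.

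Next I would identify both vertical functors as localizations. On the dendroidal side, the covariant model structure on $\dsets/X$ is a left Bousfield localization of the slice of the operadic model structure (\Cref{covmambo}), so by the dictionary recalled in \Cref{sofar} the induced functor $(\mathsf{dOp}_\infty)_{/X}\to\mathsf{dLeft}_X$ is a localization, i.e.\ a left adjoint with fully faithful right adjoint. On the Segal-space side, $\mathsf{DLeft}_{d_! X}$ was \emph{defined} in \Cref{doechii} as the full subcategory of $(\mathsf{DOp}_\infty)_{/d_! X}$ spanned by the objects local with respect to the set of dendroidal left and inner horn inclusions; as the local objects for a set of maps in a presentable $\infty$-category, this subcategory is reflective, and its reflector is the right vertical localization.

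The crux is then to show that the top equivalence carries the left localization onto the right one; commutativity of the square and the fact that the bottom arrow is an equivalence both follow formally once this is done. As both localizations are reflective, it suffices to prove that $d_!$ matches the two classes of local objects, equivalently that the slice equivalence identifies the two saturated classes of local equivalences. Both classes are generated by the inner and leaf (left) horn inclusions $\Lambda^x T\to\Omega[T]$ relative to the base, so the point is that $d_!$—induced by the inclusion of dendroidal sets as discrete dendroidal spaces, hence compatible with representables and with $d_!(\cN_d Q)\simeq\mathsf{disc}(\cN_d Q)$—sends each representable tree $\Omega[T]$ to the corresponding representable complete dendroidal Segal space and each horn inclusion $\Lambda^x T\hookrightarrow\Omega[T]$ to the corresponding Segal-space horn. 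Granting this, an object of $(\mathsf{dOp}_\infty)_{/X}$ is a dendroidal left fibration precisely when its image under $d_!$ is local, so the slice equivalence restricts to the desired bottom equivalence $\mathsf{dLeft}_X\xrightarrow{\,\sim\,}\mathsf{DLeft}_{d_! X}$. The main obstacle is exactly this last comparison of generators: the first two steps are formal, whereas verifying that the abstract equivalence $d_!$ genuinely transports the combinatorial horn inclusions of the dendroidal covariant structure to those of the Segal-space covariant structure requires unwinding the explicit construction of $d_!$ from \cite{CM:DSSIO} and its action on representable trees and their horns, where all of the ``two models agree'' content is concentrated.
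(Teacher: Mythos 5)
Your proposal cannot be checked against ``the paper's own proof'' in the usual sense, because the paper offers no proof of \Cref{cisimo}: the entire sliced-and-localized statement is imported from \cite{CM:DSSIO}, and the surrounding text of \Cref{doechii} only records the unsliced equivalence $d_!\colon \mathsf{dOp}_\infty\simeq\mathsf{DOp}_\infty$ and the definition of $\mathsf{DLeft}_{d_!X}$ as the full subcategory of horn-local objects. What you do differently is to reconstruct the sliced statement from that unsliced equivalence by formal localization theory: slice the equivalence to get the top arrow, recognize both vertical arrows as reflective localizations, and match the two generating classes of local objects. This is a correct and viable route, and it buys independence from the specific relative result of \cite{CM:DSSIO}; but to be a complete proof it must supply three points your write-up leaves implicit. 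First, that ${\mathsf{dOp}_\infty}_{/X}$, the slice of the presented $\infty$-category, agrees with the $\infty$-category presented by the slice of the operadic model structure over $X$ --- this is exactly where the hypothesis that $X$ is a quasioperad (hence operadically fibrant) enters, and without it the left vertical arrow of your square does not obviously come from the Bousfield localization of \Cref{covmambo}. Second, that the essential image of the right adjoint of that localization --- the dendroidal left fibrations over $X$, by \Cref{covmambo} --- coincides with the objects local with respect to the inner and leaf horn inclusions in the $\infty$-categorical sense; \Cref{covmambo} only asserts that the covariant structure is \emph{some} left Bousfield localization, so translating the strict right lifting property against horns into mapping-space locality requires the enrichment and pushout-product results of \cite{He:AOIO} and \cite{HeMo:SDHT}. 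Third, that the derived $d_!$ carries a horn inclusion over $X$ to the corresponding discrete horn inclusion over $d_!X$; this holds because $\Lambda^xT$ and $\Omega[T]$ are normal, hence cofibrant, so no derived correction is needed, but it should be said. All three points are standard and available in the references the paper already cites, and you correctly identify that the irreducible model-comparison content --- the unsliced equivalence and the behaviour of $d_!$ on representables and horns --- still rests on \cite{CM:DSSIO}.
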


\subsubsection{The definition}

\noindent Composing the equivalence between the operadic model structure on dendroidal sets and the model structure on dendroidal spaces for complete dendroidal Segal spaces with Hinich-Moerdijk comparison functors of \cite{HM:OELIODIO}, one obtains an equivalence of $\infty$-categories $$\adjunction{\lambda'}{\mathsf{dOp}_\infty}{\Op_\infty}{\delta'}$$ between the $\infty$-category presented by the operadic model structure on dendroidal sets and that of Lurie $\infty$-operads. It enjoys the property that, for a discrete operad $Q$, one has $\lambda'(\cN_d Q)\simeq Q^\otimes$.

\noindent We can now give the following

\begin{deff}
Let $P$ be a $\Sigma$-free discrete operad. The \emph{operadic straightening functor} for $P$ is the functor of $\infty$-categories $$ \mathsf{St}^P\colon \mathsf{dLeft}_{\cN_d P}\longrightarrow \alg_P(\cS)$$ defined in the following way: given a dendroidal left fibration $(X,f)$ and an object $c$ of $P$, the value at $c$ of the $P$-algebra $\mathsf{St}^P(X,f)$ is weakly equivalent to $$ \mathsf{St}^P(X,f)(c)\simeq  \env(\lambda' X)\times_{\env(P)} {\env(P)}_{/c}.$$
\end{deff}

\subsection{The dendroidal rectification presents the operadic straightening }\label{fine}

\noindent Let us prove the statement of the title, and more precisely the following

\begin{theorem}\label{desame}
	For a $\Sigma$-free discrete operad $P$, there is an equivalence of functors $$(\rho_!^P)_\infty \simeq \mathsf{St}^P\colon \mathsf{dLeft}_{\cN_d P}\longrightarrow \alg_P(\cS).$$
\end{theorem}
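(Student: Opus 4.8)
The plan is to exhibit both $(\rho_!^P)_\infty$ and $\mathsf{St}^P$ as colimit-preserving functors $\mathsf{dLeft}_{\cN_d P}\to\alg_P(\cS)$ and to show they agree, naturally and as $P$-algebras, on the representable trees $(T,\alpha)$; since these generate the source under colimits, this forces a global equivalence. The functor $\rho_!^P$ is left Quillen by \Cref{adjj}, so $(\rho_!^P)_\infty$ is its left derived functor and preserves colimits. On the other side, $\mathsf{St}^P$ is the left half of the adjoint equivalence $(\mathsf{St}^P,\mathsf{UnSt}^P)$ of \cite{P:SUEIO}, hence is cocontinuous as well. Because $\dsets/\cN_d P\simeq\mathsf{Fun}((\omg/P)^{\mathrm{op}},\mathsf{Set})$ and the covariant model structure presents a left Bousfield localization of the induced one, the $\infty$-category $\mathsf{dLeft}_{\cN_d P}$ is generated under colimits by the images of the representables $(T,\alpha)$. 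It therefore suffices to construct a natural equivalence between the restrictions of the two functors to $\omg/P$.

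First I would compute $(\rho_!^P)_\infty$ on a representable. A representable $(T,\alpha)$ is normal, hence cofibrant, so $(\rho_!^P)_\infty(T,\alpha)$ is presented by the simplicial $P$-algebra $\rho_!^P(T,\alpha)$ of \Cref{rect1}, whose value at a colour $c$ is $\cN(\alpha/c)$. Unwinding the poset $\alpha/c$ and the discrete envelope, an object of $\alpha/c$ is a pair $(\underline e,z)$ with $\underline e$ a tuple of edges of $T$ and $z\in P(\alpha(\underline e);c)$, which is exactly an object of the fibre product $\env(T)\times_{\env(P)}\env(P)_{/c}$; checking compatibility with the order relations yields a natural isomorphism of simplicial sets $\cN(\alpha/c)\simeq\env(T)\times_{\env(P)}\env(P)_{/c}$. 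As $T$ is a discrete operad, $\lambda'(\cN_d\Omega(T))\simeq\Omega(T)^\otimes$ and hence $\env(\lambda' T)\simeq\env(T)$, so the right-hand side is precisely the value $\mathsf{St}^P(T,\alpha)(c)$. Thus the two functors already agree on representables at the level of underlying spaces, naturally in $(T,\alpha)$.

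Next I would promote this to an equivalence of $P$-algebras. On the rectification side, the multiplication on $\rho_!^P(T,\alpha)$ is the one induced by grafting of subtrees (\Cref{sirene}), which under the above isomorphism corresponds to concatenation of strings followed by operadic composition in $\env(P)$. On the straightening side, the $P$-algebra structure of $\mathsf{St}^P(T,\alpha)$ is the one produced in \cite{P:SUEIO} from the symmetric monoidal envelope $\env(\lambda' T)^\otimes\to\env(P)^\otimes$ by transporting its monoid structure along the monoidal straightening equivalence of \cite{R:MGCIC} for the discrete symmetric monoidal category $A=\env(P)$. This is where \Cref{kiku} (equivalently \Cref{coro2}) enters: it identifies that abstract monoidal straightening with the point-set functor $\rho_!^A$, compatibly with Day convolution on $\mathsf{Fun}(A,\cS)$ and the $\boxtimes$-product on $\mathsf{Left}_{\cN A}$. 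Consequently the multiplication that $\mathsf{St}^P(T,\alpha)$ receives is again read off from the concatenation product on $\env(T)$ and operadic composition in $\env(P)$, matching the grafting multiplication of \Cref{sirene}. This makes the isomorphism of the previous paragraph an equivalence of $P$-algebras, natural in $(T,\alpha)$.

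Finally, the natural equivalence of the two restrictions to $\omg/P$ extends along colimits: both functors are cocontinuous and $\mathsf{dLeft}_{\cN_d P}$ is generated under colimits by the representables, so an equivalence on representables determines an equivalence of the functors, proving \Cref{desame}. The genuinely delicate step is the third paragraph, the matching of multiplicative structures: one must verify that the monoidal straightening of \Cref{kiku} sends the envelope's concatenation/composition product to exactly the grafting-induced product of \Cref{sirene}, and not merely that the underlying spaces coincide. Tracking this coherently, and confirming that the formula defining $\mathsf{St}^P$ in \cite{P:SUEIO} indeed carries the $P$-algebra structure obtained this way, is the technical heart of the argument.
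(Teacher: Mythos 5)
Your proposal is correct and follows essentially the same strategy as the paper's proof: both $(\rho_!^P)_\infty$ and $\mathsf{St}^P$ are cocontinuous, the source is generated under colimits by the representables $(T,\alpha)$, and on a representable both functors compute $\cN(\alpha/c)\simeq \env(T)\times_{\env(P)}\env(P)_{/c}$. The only differences are in bookkeeping: the paper justifies the reduction to representables by passing to the dendroidal Segal space model and precomposing with the localization ${\mathsf{DOp}_\infty}_{/X}\to \mathsf{DLeft}_X$ of \Cref{cisimo} (which is the cleanest way to make "generated under colimits, hence determined on representables" rigorous), whereas you argue directly in $\mathsf{dLeft}_{\cN_d P}$ with a slightly looser justification; and your third paragraph spells out, via \Cref{kiku} applied to $A=\env(P)$, the matching of the $P$-algebra structures that the paper's proof compresses into "it is easy to check that the actions of $P$ are equivalent as well," consistently with where the paper's strategy section says \Cref{kiku} enters.
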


\begin{proof}

	\noindent Let us denote by $X$ the complete dendroidal Segal space $d_!(\cN_d P)$. In light of \Cref{cisimo}, we can make two reductions: we can write both $(\rho_!^P)_\infty $ and $\mathsf{St}^P$ as functors on the $\infty$-category $\mathsf{DLeft}_X$, and since this latter is a localization of the over-category ${\mathsf{DOp}_\infty}_{/X} $, it is sufficient to show that $(\rho_!^P)_\infty $ and $\mathsf{St}^P$ agree after precomposition with the localization map, that is as functors ${\mathsf{DOp}_\infty}_{/X} \to \alg_P(\cS)$.
	
	\noindent  The functor ${\mathsf{DOp}_\infty}_{/X}\to \mathsf{DLeft}_X$ is cocontinuous and essentially essentially surjective: as the $\infty$-category $\mathsf{PSh}(\omg)/X\simeq \mathsf{PSh}(\omg/X)$ is generated under colimits by the representables (\cite[Corollary 5.1.5.8.]{Lu:HTT}), the $\infty$-category ${\mathsf{DOp}_\infty}_{/X}$ is also (non-freely) generated under colimits by elements of the form  $(T,\alpha),$ where $T$ is a tree and $\alpha\colon T \to P $ is a morphism of discrete operads. Given that both $\mathsf{St}^P$ and $(\rho_!^P)_\infty$ are cocontinuous, they are equivalent if and only if they are equivalent when computed on such elements.
	
	\noindent Consider then an element $(T,\alpha)$, with $T$ a tree, and let $c$ be an object of $P$. By inspection, we check that there is an isomorphism $$ (\rho_!^P)_\infty(T,\alpha)(c)\simeq \cN(\alpha/c)\simeq \mathsf{Env}(T)\times_{\mathsf{Env}(P)}{\mathsf{Env}(P)}_{/c},$$ and the latter space is equivalent, by definition, to the space given by $\mathsf{St}^P(T,\alpha)(c)$. The two $P$-algebras have therefore the same underlying family of spaces, and it is easy to check that the actions of $P$ are equivalent as well, so this concludes the argument.
\end{proof}

\begin{theorem}\label{QEstrunstr}
	Let $P$ be a $\Sigma$-free discrete operad. The pair $((\rho_!^P)_\infty, (\rho^*_P)_\infty)$ is an equivalence of $\infty$-categories. In particular, the pair of functors between $1$-categories $$ \adjunction{\rho_!^P}{\dsets/\cN_d P}{\alg_P(\ssets)}{\rho^*_P}$$ is a Quillen equivalence between the covariant and the projective model structure.
\end{theorem}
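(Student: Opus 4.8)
The plan is to leverage \Cref{desame} together with the fact, proved in \cite{P:SUEIO}, that the operadic straightening functor is an equivalence of $\infty$-categories. By \Cref{desame} there is an equivalence of functors $(\rho_!^P)_\infty \simeq \mathsf{St}^P$; since $\mathsf{St}^P\colon \mathsf{dLeft}_{\cN_d P}\to \alg_P(\cS)$ is an equivalence of $\infty$-categories, it follows at once that $(\rho_!^P)_\infty$ is an equivalence, with homotopy inverse provided by its right adjoint $(\rho^*_P)_\infty$. This establishes the first assertion of the theorem.

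It remains to descend this $\infty$-categorical statement to the level of model categories. Recall from \Cref{adjj} that $(\rho_!^P, \rho^*_P)$ is already a Quillen adjunction; a Quillen adjunction is a Quillen equivalence precisely when its total left derived functor $\mathbb{L}\rho_!^P$ induces an equivalence of homotopy categories. I would invoke the dictionary recorded in \Cref{sofar}, namely the equivalence of functors $\mathbb{L}\rho_!^P \simeq \mathsf{ho}\big((\rho_!^P)_\infty\big)$, where $\mathsf{ho}(-)$ denotes passage to homotopy categories. Since $(\rho_!^P)_\infty$ is an equivalence of $\infty$-categories by the previous paragraph, applying $\mathsf{ho}(-)$ shows that $\mathbb{L}\rho_!^P$ is an equivalence of homotopy categories. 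Hence $(\rho_!^P, \rho^*_P)$ is a Quillen equivalence, as claimed.

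All the substantive work has been absorbed into \Cref{desame} and into the external input from \cite{P:SUEIO}, so the present argument is purely formal. The only point deserving care is that the equivalence of $\infty$-categories we produce is genuinely the functor \emph{induced by the Quillen pair} --- rather than merely an abstract equivalence between the two $\infty$-categories --- which is exactly what \Cref{desame} guarantees and what makes the identification $\mathbb{L}\rho_!^P \simeq \mathsf{ho}\big((\rho_!^P)_\infty\big)$ legitimately applicable here.
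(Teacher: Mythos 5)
Your proof is correct and takes exactly the paper's approach: the paper deduces \Cref{QEstrunstr} immediately from \Cref{desame} together with the fact from \cite{P:SUEIO} that $\mathsf{St}^P$ is an equivalence, then descends to the model-categorical statement via the dictionary of \Cref{sofar} (in particular $\mathbb{L}\rho_!^P \simeq \mathsf{ho}\big((\rho_!^P)_\infty\big)$), just as you do. The paper records no separate proof environment for this theorem, so your write-up only makes explicit the formal steps the paper leaves implicit, including the correctly-identified subtlety that the equivalence must be the functor induced by the Quillen pair rather than an abstract one.
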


\begin{rmk}
	When $P$ is not $\Sigma$-free, it may happen that the Quillen adjunction $(\rho_!^P,\rho^*_P)$ is \emph{not} a Quillen equivalence. 
	
	\noindent Indeed, consider the case of $P=\mathsf{Com}$ the commutative operad, defined by $\mathsf{Com}(n)=\ast$ for all $n\geq 0$, with obvious composition law and symmetric group action. The dendroidal nerve of $\mathsf{Com}$ is the terminal object in the category of dendroidal sets, so that $$\dsets/ \cN \mathsf{Com} \simeq \dsets,$$ and the covariant model structure on the over category coincides with the so-called \emph{absolute covariant model structure} on $\dsets$, which models $\mathbb{E}_\infty$-algebras (\cite[Corollary 13.41]{HeMo:SDHT}).
	On the other hand, as detailed in \cite[Example 4.4]{W:MSCMGMC}, in the projective model structure on $\alg_{P}(\ssets)$ every path connected commutative algebra is equivalent to a generalized Eilenberg-Mac Lane space, i.e. product of Eilenberg-Mac Lane spaces. The existence of spaces like $QS=\Omega^\infty \Sigma^\infty \mathbb{S}^0$, which has an $\mathbb{E}_\infty$-algebra structure but is not a generalized Eilenberg-Mac Lane space, demonstrates that the Quillen adjunction cannot be a Quillen equivalence.
\end{rmk}

\begin{rmk}
	In \cite{B:RCPCCDA}, Barata defines the symmetric monoidal envelope of a dendroidal $\infty$-operad as a functor $\mathsf{d}\env(-)^\otimes\colon \dsets\to \ssets/\mathsf{Fin}_*$ and proves that when applied to a discrete operad it agrees with the symmetric monoidal envelope functor. The homotopical properties of $\mathsf{d}\env^\otimes$ have not been studied yet, but whether it is possible to use the dendroidal envelope to express the rectification functor is subject of future investigations.
\end{rmk}

\addtocontents{toc}{\SkipTocEntry}

\providecommand{\bysame}{\leavevmode\hbox to3em{\hrulefill}\thinspace}
\providecommand{\MR}{\relax\ifhmode\unskip\space\fi M`R }
\providecommand{\MRhref}[2]{%
	\href{http://www.ams.org/mathscinet-getitem?mr=#1}{#2}}
\providecommand{\href}[2]{#2}

\bibliographystyle{alpha}

\end{document}